\newtheorem{theorem}{Theorem}[section]
\newtheorem{lemma}[theorem]{Lemma}
\newtheorem{proposition}[theorem]{Proposition}
\newtheorem{corollary}[theorem]{Corollary}
\theoremstyle{definition}
\newtheorem{definition}{Definition}
\newtheorem{remark}{Remark}
\title[Integrable Teich\-m\"ul\-ler space]{Analytic Besov functions, pre-Schwarzian derivatives, 
\\and integrable Teich\-m\"ul\-ler spaces}
\author[K. Matsuzaki]{Katsuhiko Matsuzaki}
\address{Department of Mathematics, School of Education, Waseda University \endgraf
Nishi-Waseda 1-6-1, Shinjuku, Tokyo 169-8050, Japan}
\email{matsuzak@waseda.jp}
\author[H. Wei]{Huaying Wei} 
\address{Center for Applied  Mathematics, Tianjin University \endgraf No. 92 Weijin Road,
Tianjin 300072, PR China}
\email{hywei@tju.edu.cn}
\subjclass[2020]{Primary 30C62, 30H25; Secondary 30F60, 30H35, 30H30}
\keywords{integrable Beltrami coefficient, pre-Schwarzian derivative, analytic Besov function, BMOA, Bers fiber space, Weil--Petersson metric} 
\thanks{Research supported by 
Japan Society for the Promotion of Science (KAKENHI 23K25775 and 23K17656) and the
National Natural Science Foundation of China (grant no. 12271218 and 12571083)}
\begin{document}

\maketitle

\begin{abstract}
We study the embedding of integrable Teichm\"uller spaces $T_p$ into analytic Besov spaces
via pre-Schwarzian derivatives. In contrast to the Bers embedding by Schwarzian derivatives,
a significant difference arises between the cases $p>1$ and $p=1$. In this paper we focus on the case $p=1$ and
extend previous results obtained for $p>1$. 
This provides a unified framework for the complex-analytic theory of 
integrable Teichm\"uller spaces $T_p$ for all $p \geq 1$.
\end{abstract}

\section{Introduction}\label{1}

The integrable Teich\-m\"ul\-ler space has been extensively studied as a subspace of the universal Teich\-m\"ul\-ler space that carries the Weil--Petersson metric and parametrizes the family of Weil--Petersson curves. Bishop's recent characterization of Weil--Petersson curves \cite{Bi} is closely related to this theory from the complex-analytic viewpoint.
Wang \cite{Wa} defined a Dirichlet energy arising from the Loewner ODE that generates SLE and showed that the finiteness of this energy forces the evolving arcs to be Weil--Petersson. Moreover, this Dirichlet energy coincides with the universal Liouville action on the integrable Teichm\"ul-ler space, which serves as a K\"ahler potential for the Weil--Petersson metric.

The integrable Teich\-m\"ul\-ler space $T_2$ was introduced by Cui \cite{Cu}, and its Hilbert manifold structure and Weil--Petersson geometry were subsequently developed by Takhtajan and Teo \cite{TT}; foundational complex-analytic aspects were established by Shen \cite{Sh}. 
This space is the quotient of $M_2(\mathbb H^+)$, the space of square-integrable
Beltrami coefficients on $\mathbb H^+$ (where $\mathbb H^\pm$ denote the upper/lower half-planes),
by Teichm\"uller equivalence,
and is embedded homeomorphically into the Hilbert space
$\mathcal A_2(\mathbb H^-)$ by the Bers embedding.
We recall the definitions of these spaces and mappings below in the case $p=2$. 
The Weil--Petersson metric is induced either from the inner product on $\mathcal A_2(\mathbb H^-)$ or equivalently from the pairing
$$
\int_{\mathbb H^+}\mu(z) \overline{\nu(z)}\frac{dx\,dy}{|{\rm Im}\, z|^2}
$$
for harmonic representatives $\mu, \nu \in M_2(\mathbb H^+)$ of tangent vectors of $T_2$ at the origin.

For a parameter $p \geq 1$, the $p$-integrable Teich\-m\"ul\-ler space $T_p$ is modeled on $p$-integrable Beltrami coefficients on the half-plane. Explicitly, we set
$$
M_p(\mathbb H^+)=\Bigl\{\mu \in M(\mathbb H^+) \mid \Vert \mu \Vert_p=\Bigl(\int_{\mathbb H^+}
|\mu(z)|^p\,\frac{dx\,dy}{|{\rm Im}\, z|^2}\Bigr)^{1/p}<\infty \Bigr\},
$$
where $M(\mathbb H^+)=\{\mu \in L_\infty(\mathbb H^+) \mid \Vert \mu \Vert_\infty<1\}$ is the space of Beltrami coefficients, and define $T_p=\{[\mu] \mid \mu \in M_p(\mathbb H^+)\}$ as the set of Teichm\"uller equivalence classes,
which is included in the universal Teichm\"uller space $T=\{[\mu] \mid \mu \in M(\mathbb H^+)\}$ 
(see Section \ref{5}). The original theory concentrated on $p=2$, was extended to $p \geq 2$ by Guo \cite{Gu},
Tang and Shen \cite{TS}, and further to $p>1$ by Wei and Matsuzaki \cite{WM-3}. In addition, it was proved in \cite{WM-1,WM-4} that the Bers embedding
$\alpha:T_p \to \mathcal A_p(\mathbb H^-)$,
defined by $\alpha([\mu])=S_{F^\mu}$ via the Schwarzian derivative of the normalized conformal homeomorphism 
$F^\mu:\mathbb H^-\to\mathbb C$ with quasiconformal extension to the plane
of dilatation $\mu \in M_p(\mathbb H^+)$, is a homeomorphism onto its image for all $p\geq 1$. Hence $T_p$ inherits a natural complex structure modeled on the Banach space 
$$
\mathcal A_p(\mathbb H^-)=\Bigl\{ \Phi \in {\rm Hol}(\mathbb H^-) \mid \Vert \Phi\Vert_{{\mathcal A}_p}=\Bigl(\int_{\mathbb H^-} |({\rm Im}\,z)^2 \Phi(z)|^p \frac{dx\,dy}{|{\rm Im}\,z|^2}\Bigr)^{1/p}<\infty \Bigr\},
$$
where ${\rm Hol}(\mathbb H^-)$ denotes the holomorphic functions on $\mathbb H^-$.

This paper develops a unified embedding theory for integrable Teich\-m\"ul\-ler spaces via the logarithm of derivative $\log (F^\mu)'$ and the pre-Schwarzian derivative $N_{F^\mu}=(\log (F^\mu)')'$, including the endpoint $p=1$. 
For the universal Teichm\"uller space $T$, this model is
intensively studied by Zhuravlev \cite{Z} on the unit disk $\mathbb D$.
The target on the function side of $T_p$ is the analytic Besov space on $\mathbb H^-$:
for $p>1$,
$$
\mathcal B_p(\mathbb H^-)=\Bigl\{\Phi \in {\rm Hol}(\mathbb H^-) \mid 
\Vert\Phi\Vert_{\mathcal B_p}=\Bigl(\int_{\mathbb H^-}|({\rm Im}\, z)\,\Phi'(z)|^p\,\frac{dx\,dy}{|{\rm Im}\, z|^2}\Bigr)^{1/p}<\infty
\Bigr\},
$$
while for $p \geq 1$ we also set
$$
\mathcal B_p^{\#}(\mathbb H^-)=\Bigl\{\Phi \in {\rm Hol}(\mathbb H^-) \mid 
\Vert\Phi\Vert_{\mathcal B_p^{\#}}=\Bigl(\int_{\mathbb H^-}|({\rm Im}\, z)^2\,\Phi''(z)|^p\,\frac{dx\,dy}{|{\rm Im}\, z|^2}\Bigr)^{1/p}<\infty
\Bigr\}.
$$
Then we define
$$
\widehat{\mathcal B}_p(\mathbb H^-)=\mathcal B_p^{\#}(\mathbb H^-) \cap {\rm BMOA}(\mathbb H^-)
$$
with norm $\Vert \Phi \Vert_{\widehat{\mathcal B}_p}=\Vert \Phi \Vert_{{\mathcal B}_p^{\#}}+\Vert \Phi \Vert_{\rm BMOA}$,
where $\mathrm{BMOA}(\mathbb H^-)$ is the Banach space of holomorphic functions $\Phi$ on $\mathbb H^-$ that
are given by the Poisson integral of BMO functions on the real line $\mathbb R$.
BMOA can also be characterized by Carleson measures (see Section \ref{2}). 

Since $\mathcal B_1(\mathbb H^-)$ collapses to constants, the appropriate target for the pre-Schwarzian at $p=1$ is 
$\widehat{\mathcal B}_1(\mathbb H^-)$. Moreover, for $p>1$, the norms $\Vert\Phi\Vert_{\mathcal B_p}$ and
$\Vert \Phi \Vert_{\widehat{\mathcal B}_p}$ are equivalent.
We also recall the Besov spaces defined on 
$\mathbb D$ and prove that the Cayley transformation yields a Banach space isomorphism
between $\widehat{\mathcal B}_p(\mathbb H^-)$ and $\widehat{\mathcal B}_p(\mathbb D)$ (Theorem \ref{isomorphism}).

Section \ref{3} studies the pre-Schwarzian derivative map
$L:M_p(\mathbb H^+) \to \widehat{\mathcal B}_p(\mathbb H^-)$ given by $L(\mu)=\log (F^\mu)'$.
A direct adaptation of the Schwarzian argument shows the holomorphy of $L$ under certain constraints on $p$; to remove these constraints, we exploit the Schwarzian derivative map
$S:M_p(\mathbb H^+) \to \mathcal A_p(\mathbb H^-)$, $S(\mu)=S_{F^\mu}$,
together with sharp norm estimates (Theorem \ref{P-holo}). Using the existence of a local holomorphic right inverse to $S$, we prove that the canonical holomorphic map
$J:L(M_p(\mathbb H^+)) \to S(M_p(\mathbb H^+))$, $J(\Psi)=\Psi''-\tfrac12(\Psi')^2$,
is in fact biholomorphic (Theorem \ref{SL}). Consequently, the three spaces $M_p(\mathbb H^+)$, $\widehat{\mathcal B}_p(\mathbb H^-)$, and $\mathcal A_p(\mathbb H^-)$ are uniformly linked for all $p \geq 1$ in a manner that extends earlier results (Theorem \ref{conformal}).

In Section \ref{4} we revisit these results on $\mathbb D$ and the exterior unit disk $\mathbb D^*$. Although the Cayley transformation 
identifies $\widehat{\mathcal B}_p(\mathbb H^-)$ with $\widehat{\mathcal B}_p(\mathbb D)$ as Banach spaces, 
the canonical map
$J:L(M_p(\mathbb D^*)) \to S(M_p(\mathbb D^*))$
fails to be injective in this model. A modified statement shows that $J$ is a holomorphic split submersion (Theorem \ref{bundle}). We analyze the fiber structure of $L(M_p(\mathbb D^*))$ over $S(M_p(\mathbb D^*))$, proving that $L(M_p(\mathbb D^*))$ is a real-analytic disk bundle (Theorem \ref{bundle2}); for $p>1$, a global real-analytic section identifies it with the product $S(M_p(\mathbb D^*))\times\mathbb D^*$ (Corollary \ref{product}).

Section \ref{5} discusses the complex Banach manifold structure, the topological group structure, and the Weil--Petersson metric on $T_p$ for $p\ge1$. In parallel with the Bers embedding $\alpha:T_p\to \mathcal A_p(\mathbb H^-)$ via $S$, 
we introduce the pre-Bers embedding
$\beta:T_p\to \widehat{\mathcal B}_p(\mathbb H^-)$
via $L$, and prove that $\alpha$ and $\beta$ induce biholomorphically equivalent complex structures (Theorem \ref{Bersemb}). Moreover, since the Weil--Petersson metric can be regarded as an invariant metric obtained by
right translation of 
the norm on $\alpha(T_p)$, an analogous construction on $\beta(T_p)$ yields an alternative Weil--Petersson metric with similar properties (Theorem \ref{complete}).

Finally, Section \ref{6} compares $T_p$ $(p \geq 1)$ with the Teichm\"uller space $T^\gamma$ $(0<\gamma\leq 1)$ of circle diffeomorphisms whose derivatives are H\"older--Zygmund continuous.
These are 
defined by Beltrami coefficients on $\mathbb D^*$ satisfying $|\mu(z)|=O((|z|-1)^\gamma)$ as $|z|\to1$, corresponding to orientation-preserving circle diffeomorphisms $h$ with $h'\in C^{\gamma}$ (for $\gamma=1$, $h'$ is continuous and satisfies the Zygmund condition). While $T^1\subset T_p$ for every $p>1$ and every $h\in T_1$ is known to be a $C^1$-diffeomorphism, there is no inclusion relation between $T^1$ and $T_1$ (Theorem \ref{inclusion}).


\section{Analytic Besov functions}\label{2}

We denote by $\mathbb H$ either the upper or the lower half-plane. When necessary, we write $\mathbb H^+$ for the upper half-plane and $\mathbb H^-$ for the lower half-plane.

As a generalization of analytic Dirichlet functions (the case $p=2$),
we introduce the following classes of holomorphic functions on $\mathbb H$, which we call
{\it analytic Besov functions}; see \cite[Chapter~5]{Zhu}, where these functions are defined on $\mathbb D$.
As mentioned below, the seminorm $\Vert \cdot \Vert_{\mathcal B_p}$ in the following definition is conformally invariant.
However, the treatment of $\Vert \cdot \Vert_{\mathcal B_p^\#}$ is more delicate.

\begin{definition}
For $p>1$, define the seminorm
$$
\Vert \Phi\Vert_{{\mathcal B}_p}=\Bigl(\int_{\mathbb H} |({\rm Im}\,z)\, \Phi'(z)|^p \frac{dx\,dy}{|{\rm Im}\,z|^2}\Bigr)^{1/p}
$$
for holomorphic functions $\Phi$ on $\mathbb H$. The set 
of all such $\Phi$ with $\Vert \Phi\Vert_{{\mathcal B}_p}<\infty$
is denoted by ${\mathcal B}_p(\mathbb H)$. 
Moreover, for $p \geq 1$, define the seminorm
$$
\Vert \Phi \Vert_{{\mathcal B}_p^{\#}}=\Bigl(\int_{\mathbb H} |({\rm Im}\,z)^2\, \Phi''(z)|^p \frac{dx\,dy}{|{\rm Im}\,z|^2}\Bigr)^{1/p}. 
$$
The set of all such $\Phi$ with $\Vert \Phi\Vert_{{\mathcal B}_p^{\#}}<\infty$
is denoted by ${\mathcal B}_p^{\#}(\mathbb H)$. 
\end{definition}

\begin{remark}
If one applies the seminorm $\Vert \cdot \Vert_{\mathcal B_p}$ with $p=1$, 
then only constant functions $\Phi$ satisfy $\Vert \Phi\Vert_{{\mathcal B}_1}<\infty$; see \cite[p.~132]{Zhu}.
\end{remark}

A holomorphic function $\Phi$ on $\mathbb H$ is called a {\it Bloch function} if the seminorm satisfies
$$
\Vert \Phi\Vert_{{\mathcal B}_\infty}=\sup_{z \in \mathbb H} |({\rm Im}\,z)\, \Phi'(z)|<\infty.
$$
The set of all Bloch functions on $\mathbb H$ is denoted by $\mathcal B_\infty(\mathbb H)$.
Moreover, $\Phi$ is called a {\it BMOA function} if 
$|{\rm Im}\,z|\,|\Phi'(z)|^2\,dx\,dy$ is a Carleson measure on $\mathbb H$.
In general, a (possibly infinite) measure $m$ on $\mathbb H$ is said to be a Carleson measure if
$\sup_{I \subset \mathbb R} m(\widehat I)/|I|<\infty$,
where the supremum is taken over all bounded intervals $I\subset\mathbb R$ and $\widehat I \subset \mathbb H$ denotes
the Carleson box (the square in $\mathbb H$) above $I$. Accordingly,
the BMOA seminorm of $\Phi$ is defined by
$$
\Vert \Phi \Vert_{\rm BMOA}=\Big(\sup_{I \subset \mathbb R} \frac{1}{|I|}
\int_{\widehat I} |({\rm Im}\,z)\,\Phi'(z)|^2\frac{dx\,dy}{|{\rm Im}\,z|}\Big)^{1/2}.
$$
This definition of BMOA is equivalent to requiring that $\Phi$ be holomorphic and given by the Poisson integral of
a BMO function on $\mathbb R$. On the unit disk $\mathbb D$, the corresponding equivalence is well known
(see \cite[Theorem~6.5]{Gi}); on the half-plane $\mathbb H$, it also holds (see \cite[p.~262]{Ga}).
The set of all BMOA functions on $\mathbb H$ is denoted by ${\rm BMOA}(\mathbb H)$.

We next compare the above seminorms. For convenience, we include proofs of the standard estimates.

\begin{proposition}\label{norm}
{\rm (i)} For $1<p \leq q \leq \infty$, 
there exists a constant $c_{p,q}>0$ such that
$\Vert \Phi\Vert_{{\mathcal B}_q} \leq c_{p,q}\,\Vert \Phi\Vert_{{\mathcal B}_p}$.
{\rm (ii)} There exists a constant $c>0$ such that $\Vert \Phi\Vert_{{\mathcal B}_\infty} \leq c\,\Vert \Phi \Vert_{\rm BMOA}$.
{\rm (iii)} For $p>1$, there exists a constant $c'_p>0$ such that
$\Vert \Phi \Vert_{\rm BMOA} \leq c'_p\,\Vert \Phi \Vert_{{\mathcal B}_p}$.
\end{proposition}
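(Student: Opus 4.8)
The plan is to derive all three estimates from two standard ingredients: the sub-mean value property of the subharmonic functions $|\Phi'|^p$ ($p\geq 1$) over Euclidean disks whose radius is comparable to the distance to $\mathbb R$, and H\"older's inequality on Carleson boxes. For (i), I would first treat $q=\infty$. Given $z_0\in\mathbb H$ with $h={\rm Im}\,z_0$, the disk $D=D(z_0,h/2)$ lies in $\mathbb H$ and ${\rm Im}\,w\asymp h$ for $w\in D$; applying the sub-mean value inequality for $|\Phi'|^p$ over $D$, multiplying by $h^p$, and using $|\Phi'(w)|^p({\rm Im}\,w)^{p-2}=|({\rm Im}\,w)\Phi'(w)|^p({\rm Im}\,w)^{-2}$ together with $h^{p-2}\asymp ({\rm Im}\,w)^{p-2}$ on $D$, one gets $|h\,\Phi'(z_0)|^p\leq C_p\int_{\mathbb H}|({\rm Im}\,w)\Phi'(w)|^p({\rm Im}\,w)^{-2}\,dudv=C_p\Vert\Phi\Vert_{{\mathcal B}_p}^p$, hence $\Vert\Phi\Vert_{{\mathcal B}_\infty}\leq c_{p,\infty}\Vert\Phi\Vert_{{\mathcal B}_p}$. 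For $p<q<\infty$ I would just split $|({\rm Im}\,z)\Phi'(z)|^q=|({\rm Im}\,z)\Phi'(z)|^{q-p}\,|({\rm Im}\,z)\Phi'(z)|^p\leq\Vert\Phi\Vert_{{\mathcal B}_\infty}^{q-p}\,|({\rm Im}\,z)\Phi'(z)|^p$, integrate against $|{\rm Im}\,z|^{-2}dxdy$, and insert the bound on $\Vert\Phi\Vert_{{\mathcal B}_\infty}$ just obtained.

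For (ii), fix $z_0\in\mathbb H$ with $h={\rm Im}\,z_0$ and let $I\subset\mathbb R$ be the interval of length $2h$ centred at ${\rm Re}\,z_0$, so that the square $\widehat I$ over $I$ contains $D(z_0,h/2)$ and ${\rm Im}\,w\asymp|I|$ there. The sub-mean value inequality for $|\Phi'|^2$ over $D(z_0,h/2)$, after rewriting $|\Phi'(w)|^2\,dudv=|({\rm Im}\,w)\Phi'(w)|^2({\rm Im}\,w)^{-2}\,dudv$ and using ${\rm Im}\,w\asymp|I|$ to trade one factor ${\rm Im}\,w$ for $|I|$, bounds $|h\,\Phi'(z_0)|^2$ by a constant times $\frac1{|I|}\int_{\widehat I}|({\rm Im}\,w)\Phi'(w)|^2({\rm Im}\,w)^{-1}\,dudv\leq\Vert\Phi\Vert_{\rm BMOA}^2$. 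Taking the supremum over $z_0$ gives the claim.

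For (iii) I would argue differently in the two ranges of $p$. For $1<p\leq 2$, I use the Bloch bound from (i) in the form $|\Phi'(z)|\leq\Vert\Phi\Vert_{{\mathcal B}_\infty}/{\rm Im}\,z$ to write $|\Phi'|^2=|\Phi'|^{2-p}|\Phi'|^p\leq\Vert\Phi\Vert_{{\mathcal B}_\infty}^{2-p}({\rm Im}\,z)^{-(2-p)}|\Phi'|^p$, so that for any interval $I$, with $\widehat I=\{x+iy: x\in I,\ 0<y<|I|\}$, $$\int_{\widehat I}|({\rm Im}\,z)\Phi'(z)|^2({\rm Im}\,z)^{-1}\,dxdy\leq\Vert\Phi\Vert_{{\mathcal B}_\infty}^{2-p}\int_{\widehat I}|\Phi'(z)|^p({\rm Im}\,z)^{p-1}\,dxdy.$$ Since ${\rm Im}\,z\leq|I|$ on $\widehat I$, the last integral is at most $|I|\int_{\widehat I}|\Phi'(z)|^p({\rm Im}\,z)^{p-2}\,dxdy\leq|I|\,\Vert\Phi\Vert_{{\mathcal B}_p}^p$; dividing by $|I|$, taking the supremum over $I$, and inserting $\Vert\Phi\Vert_{{\mathcal B}_\infty}\leq c_{p,\infty}\Vert\Phi\Vert_{{\mathcal B}_p}$ settles this range.

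For $p>2$ one cannot reduce to $p=2$, because the spaces ${\mathcal B}_p$ are no longer contained in ${\mathcal B}_2$; here I apply H\"older's inequality on $\widehat I$ with exponents $p/2$ and $p/(p-2)$ to the factorization $|\Phi'|^2\,{\rm Im}\,z=\bigl(|\Phi'|^p({\rm Im}\,z)^{p-2}\bigr)^{2/p}({\rm Im}\,z)^{1-2(p-2)/p}$. The first factor contributes $\Vert\Phi\Vert_{{\mathcal B}_p}^2$, while the second contributes $\bigl(\int_{\widehat I}({\rm Im}\,z)^{(4-p)/(p-2)}\,dxdy\bigr)^{(p-2)/p}$; since $(4-p)/(p-2)>-1$ for all $p>2$, this integral is finite and equals a constant times $|I|^{p/(p-2)}$, whose $(p-2)/p$-th power is a constant times $|I|$, so dividing by $|I|$ and taking the supremum over $I$ again gives the bound. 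The main obstacle is precisely this exponent bookkeeping in the case $p>2$: one must verify that the leftover power of ${\rm Im}\,z$ is integrable near $\mathbb R$ and that, after raising it to the H\"older-conjugate power, it produces exactly the factor $|I|$ needed to cancel the normalization in the $\rm BMOA$ semi-norm; the rest is routine.
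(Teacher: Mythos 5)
Your proposal is correct and follows essentially the same route as the paper: the sub-mean value estimate on disks $D(z_0,{\rm Im}\,z_0/2)$ for (i) and (ii), and H\"older's inequality on the Carleson box $\widehat I$ with exponents $p/2$ and $p/(p-2)$ for (iii) when $p>2$, with exactly the exponent count $(4-p)/(p-2)>-1$ that the paper uses. The only deviation is the range $1<p\leq 2$ in (iii), where you interpolate directly against the Bloch norm via $|\Phi'|^2\leq \Vert\Phi\Vert_{\mathcal B_\infty}^{2-p}({\rm Im}\,z)^{p-2}|\Phi'|^p$ instead of reducing to some $q>2$ through (i) as the paper does; both arguments are valid (and in (ii) your phrase ``${\rm Im}\,w\asymp|I|$ there'' should be read as holding on the disk, which is all your estimate actually uses).
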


\begin{proof}
(i) 
For $z \in \mathbb H$,
let $\Delta(z,|{\rm Im}\,z|/2) \subset \mathbb H$ be the disk centered at $z$ with radius $|{\rm Im}\,z|/2$.
By the integral mean inequality for holomorphic functions and the H\"older inequality,
\begin{align}
|({\rm Im}\,z)\Phi'(z)| &\leq \frac{4}{\pi |{\rm Im}\,z|}\int_{\Delta(z,|{\rm Im}\,z|/2)}|\Phi'(w)|\,du\,dv\\
&\leq \frac{4}{\pi |{\rm Im}\,z|}\Bigl(\frac{\pi|{\rm Im}\,z|^2}{4} \Bigr)^{1-1/p}
\Bigl( \int_{\Delta(z,|{\rm Im}\,z|/2)}|\Phi'(w)|^p\,du\,dv \Bigr)^{1/p}\\
&=\Bigl(\frac{4}{\pi}\Bigr)^{1/p}|{\rm Im}\,z|^{1-2/p}\Bigl(\int_{\Delta(z,|{\rm Im}\,z|/2)}|\Phi'(w)|^p\,du\,dv \Bigr)^{1/p}.\label{mean}
\end{align}
The last line is bounded by
$$
\Bigl(\frac{4}{\pi}\Bigr)^{1/p}|{\rm Im}\,z|^{1-2/p}\Bigl(\frac{2^{p-2}}{|{\rm Im}\,z|^{p-2}} \int_{\Delta(z,|{\rm Im}\,z|/2)}|({\rm Im}\, w)\Phi'(w)|^p \frac{du\,dv}{|{\rm Im}\,w|^2} \Bigr)^{1/p} \leq \frac{2}{\sqrt[p]{\pi}}\Vert \Phi \Vert_{\mathcal B_p},
$$
whence $\Vert \Phi\Vert_{{\mathcal B}_\infty} \leq c_{p,\infty}\,\Vert \Phi\Vert_{{\mathcal B}_p}$
with $c_p=c_{p,\infty}=2/\sqrt[p]{\pi}$.

For $p \leq q<\infty$, we have
$$
\int_{\mathbb H}\Bigl( \frac{|({\rm Im}\,z)\Phi'(z)|}{\Vert \Phi \Vert_{\mathcal B_\infty}}\Bigr)^q \frac{dx\,dy}{|{\rm Im}\,z|^2}
\leq \int_{\mathbb H}\Bigl( \frac{|({\rm Im}\,z)\Phi'(z)|}{\Vert \Phi \Vert_{\mathcal B_\infty}}\Bigr)^p \frac{dx\,dy}{|{\rm Im}\,z|^2},
$$
i.e. $\Vert \Phi \Vert_{\mathcal B_q}^q/\Vert \Phi \Vert_{\mathcal B_\infty}^q \leq \Vert \Phi \Vert_{\mathcal B_p}^p/\Vert \Phi \Vert_{\mathcal B_\infty}^p$.
It follows that
$$
\frac{\Vert \Phi \Vert_{\mathcal B_q}}{\Vert \Phi \Vert_{\mathcal B_\infty}} 
\leq \Bigl(\frac{\Vert \Phi \Vert_{\mathcal B_p}}{\Vert \Phi \Vert_{\mathcal B_\infty}} \Bigr)^{p/q} 
= \Bigl( \frac{1}{c_p}\Bigr)^{p/q}\Bigl(\frac{c_p \Vert \Phi \Vert_{\mathcal B_p}}{\Vert \Phi \Vert_{\mathcal B_\infty}} \Bigr)^{p/q}
\leq \Bigl( \frac{1}{c_p}\Bigr)^{p/q} \frac{c_p \Vert \Phi \Vert_{\mathcal B_p}}{\Vert \Phi \Vert_{\mathcal B_\infty}}. 
$$
Hence
$
\Vert \Phi \Vert_{\mathcal B_q} 
\leq c_{p,q}\,\Vert \Phi \Vert_{\mathcal B_p}$
with $c_{p,q}=c_p^{\,1-p/q}$.

(ii) This is sketched in \cite[p.~92]{Sa}; see also \cite[Corollary~5.2]{Gi}. From \eqref{mean} with $p=2$,
\begin{align}
|({\rm Im}\,z)\Phi'(z)| 
&\leq \frac{2}{\sqrt{\pi}}\Bigl(\int_{\Delta(z,|{\rm Im}\,z|/2)}|\Phi'(w)|^2\,du\,dv \Bigr)^{1/2}\\
&\leq \frac{4}{\sqrt{\pi}}\Bigl(\frac{1}{2|{\rm Im}\,z|}\int_{I^2(z,|{\rm Im}\,z|)}|{\rm Im}\,w|\,|\Phi'(w)|^2\,du\,dv \Bigr)^{1/2},
\end{align}
where $I^2(z,|{\rm Im}\,z|)$ denotes the Carleson box square centered at $z$
above the interval of length $2|{\rm Im}\,z|$ on $\mathbb R$.
Taking the supremum over $z\in\mathbb H$ yields $\Vert \Phi\Vert_{{\mathcal B}_\infty} \leq c\,\Vert \Phi \Vert_{\rm BMOA}$ with $c=4/\sqrt{\pi}$.

(iii) Suppose that $p>2$. For any bounded interval $I\subset\mathbb R$,
the H\"older inequality gives
\begin{align}
&\quad\ \frac{1}{|I|}\int_{\widehat I}|{\rm Im}\,w||\Phi'(w)|^2dudv\\ 
&\leq
\frac{1}{|I|}\Bigl(\int_{\widehat I}|({\rm Im}\,w)\Phi'(w)|^p\frac{dudv}{|{\rm Im}\,w|^2} \Bigr)^{2/p} \cdot
\Bigl(\int_{\widehat I} |{\rm Im}\,w|^{p/(p-2)}\frac{dudv}{|{\rm Im}\,w|^2}\Bigr)^{1-2/p}\\
&\leq \Bigl(1-\frac{2}{p} \Bigr)^{1-2/p}
\Bigl(\int_{\mathbb H}|({\rm Im}\,w)\Phi'(w)|^p\frac{dudv}{|{\rm Im}\,w|^2} \Bigr)^{2/p}.
\end{align}
Taking the supremum over $I$ yields $\Vert \Phi \Vert_{\rm BMOA} \leq c'_p\,\Vert \Phi \Vert_{{\mathcal B}_p}$ with 
$c_p'=(1-\frac{2}{p})^{1/2-1/p}$. For $1<p \leq 2$, combine this with (i).
\end{proof}

Arguing as in (i) above with the definition
$\Vert \Phi \Vert_{{\mathcal B}_{\infty}^{\#}}=\sup_{z \in \mathbb H}|({\rm Im}\,z)^2\Phi''(z)|$
yields:

\begin{proposition}\label{norm2}
For $1 \leq p \leq q \leq \infty$, 
there exists a constant $\tilde c_{p,q}>0$ such that
$\Vert \Phi\Vert_{{\mathcal B}_q^{\#}} \leq \tilde c_{p,q}\,\Vert \Phi\Vert_{{\mathcal B}_p^{\#}}$.
\end{proposition}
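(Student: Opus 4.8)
The plan is to run the argument of Proposition~\ref{norm}(i) essentially verbatim, replacing $\Phi'$ by $\Phi''$ and the weight ${\rm Im}\,z$ by $({\rm Im}\,z)^2$; it is precisely this change of homogeneity that makes the endpoint $p=1$ admissible. First I would treat the case $q=\infty$. Fix $z\in\mathbb H$ and apply the integral mean value property to the holomorphic function $\Phi''$ on the disk $\Delta(z,|{\rm Im}\,z|/2)\subset\mathbb H$; together with the H\"older inequality (a step that is vacuous when $p=1$) this yields the analogue of inequality \eqref{mean},
$$
|({\rm Im}\,z)^2\,\Phi''(z)|\leq\left(\frac{4}{\pi}\right)^{1/p}|{\rm Im}\,z|^{2-2/p}\left(\int_{\Delta(z,|{\rm Im}\,z|/2)}|\Phi''(w)|^p\,dudv\right)^{1/p}.
$$
On this disk one has $|{\rm Im}\,z|/2\leq|{\rm Im}\,w|\leq 3|{\rm Im}\,z|/2$, so writing $|\Phi''(w)|^p=|({\rm Im}\,w)^2\Phi''(w)|^p/|{\rm Im}\,w|^{2p}$ and inserting the weight $|{\rm Im}\,w|^{-2}$ into the integral costs only bounded powers of $|{\rm Im}\,z|$; the resulting exponents of $|{\rm Im}\,z|$ combine to $(2-2/p)+(2/p-2)=0$, so the right-hand side is bounded by a constant multiple of $\left(\int_{\mathbb H}|({\rm Im}\,w)^2\Phi''(w)|^p\,dudv/|{\rm Im}\,w|^2\right)^{1/p}=\Vert\Phi\Vert_{{\mathcal B}_p^{\#}}$. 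Taking the supremum over $z$ gives $\Vert\Phi\Vert_{{\mathcal B}_\infty^{\#}}\leq\tilde c_{p,\infty}\Vert\Phi\Vert_{{\mathcal B}_p^{\#}}$, valid for every $p\geq 1$.

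For the remaining range $p\leq q<\infty$ I would argue exactly as in the second half of the proof of Proposition~\ref{norm}(i): since $|({\rm Im}\,z)^2\Phi''(z)|/\Vert\Phi\Vert_{{\mathcal B}_\infty^{\#}}\leq 1$ pointwise and $q\geq p$, raising to the power $q$ only diminishes the integrand, whence $\Vert\Phi\Vert_{{\mathcal B}_q^{\#}}^q/\Vert\Phi\Vert_{{\mathcal B}_\infty^{\#}}^q\leq\Vert\Phi\Vert_{{\mathcal B}_p^{\#}}^p/\Vert\Phi\Vert_{{\mathcal B}_\infty^{\#}}^p$. Combining this with the endpoint bound just obtained and the elementary inequality $x^{p/q}\leq x$ for $x\geq 1$ produces $\Vert\Phi\Vert_{{\mathcal B}_q^{\#}}\leq\tilde c_{p,\infty}^{\,1-p/q}\Vert\Phi\Vert_{{\mathcal B}_p^{\#}}$, so one may take $\tilde c_{p,q}=\tilde c_{p,\infty}^{\,1-p/q}$.

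I do not anticipate any genuine difficulty: the whole argument is bookkeeping of exponents in the mean value estimate. The one point I would make sure to emphasize is why $p=1$ is now allowed, in contrast with Proposition~\ref{norm}(i): the weight $({\rm Im}\,z)^2$ has exactly the homogeneity matching a second derivative, so the surplus powers of $|{\rm Im}\,z|$ cancel even when $p=1$, whereas for $\Phi'$ with the weight ${\rm Im}\,z$ the analogous computation at $p=1$ leaves a residual factor $|{\rm Im}\,z|$ and no Bloch-type bound follows — consistent with the Remark that ${\mathcal B}_1(\mathbb H)$ contains only constants.
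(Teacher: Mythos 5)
Your argument is correct and coincides with the paper's intended proof: the paper gives no separate argument for Proposition~\ref{norm2} beyond the instruction to repeat the proof of Proposition~\ref{norm}(i) with $\Phi''$ and the weight $({\rm Im}\,z)^2$, which is exactly what you do, and your exponent bookkeeping in the mean-value/H\"older step is right for all $p \geq 1$. The only slip is in your closing aside: the powers of $|{\rm Im}\,z|$ cancel in the first-derivative computation at $p=1$ as well (the exponents there are $(1-2/p)+(2/p-1)=0$), so the restriction $p>1$ in Proposition~\ref{norm}(i) is not a failure of the estimate but merely reflects that $\mathcal{B}_1(\mathbb H)$ consists only of constants, as the paper's Remark states, which renders that case vacuous.
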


\begin{definition}
For $1 \leq p <\infty$, set
$$
\Vert \Phi \Vert_{\widehat{\mathcal B}_p}=\Vert \Phi \Vert_{{\mathcal B}_p^{\#}}+\Vert \Phi\Vert_{\rm BMOA}.
$$
The collection of $\Phi$ with 
$\Vert \Phi\Vert_{\widehat{\mathcal B}_p}<\infty$
is denoted by $\widehat{\mathcal B}_p(\mathbb H)$; equivalently,
$$
\widehat{\mathcal B}_p(\mathbb H)={\mathcal B}_p^{\#}(\mathbb H) \cap {\rm BMOA}(\mathbb H).
$$
\end{definition}

Hereafter, to suppress multiplicative constants, we use the notation
$A(\tau) \lesssim B(\tau)$ to mean that there exists $C>0$
such that $A(\tau) \leq C\,B(\tau)$ uniformly in the relevant parameter $\tau$;
we write $A(\tau) \asymp B(\tau)$ when both $A(\tau) \lesssim B(\tau)$ and $A(\tau) \gtrsim B(\tau)$ hold.

By Proposition \ref{norm2}, we have 
$\Vert \Phi \Vert_{\widehat{\mathcal B}_q} \lesssim \Vert \Phi \Vert_{\widehat{\mathcal B}_p}$
for $1 \leq p \leq q$. 

\begin{proposition}\label{new}
Let $1<p<\infty$. Then $\Vert \Phi \Vert_{{\mathcal B}_p^{\#}} \lesssim \Vert \Phi \Vert_{{\mathcal B}_p}$
for $\Phi \in {\mathcal B}_p(\mathbb H)$. 
Conversely, 
$\Vert \Phi \Vert_{{\mathcal B}_p} \lesssim \Vert \Phi \Vert_{{\mathcal B}_p^{\#}}+\Vert \Phi \Vert_{{\mathcal B}_\infty}$ for
$\Phi \in {\mathcal B}_p^{\#}(\mathbb H) \cap {\mathcal B}_\infty(\mathbb H)$.
Hence, the seminorms $\Vert \Phi \Vert_{{\mathcal B}_p}$ and $\Vert \Phi \Vert_{\widehat{\mathcal B}_p}$ are equivalent.
\end{proposition}

\begin{proof}
For the first inequality, we adapt the proof of \cite[Lemma~3.3]{STW}.
By the Cauchy integral formula,
$$
|\Phi''(z)| \leq \frac{1}{2\pi}\int_{|\zeta-z|=y/4} \frac{|\Phi'(\zeta)|}{|\zeta-z|^2}|d\zeta|
\leq \frac{4}{y} \max_{|\zeta-z|\leq y/4} |\Phi'(\zeta)|
$$
for $z=x+iy \in \mathbb H^+$. 
Moreover,
$$
|\Phi'(\zeta)|^p \leq 
\frac{16}{\pi y^2} \int_{|w-\zeta| \leq y/4}|\Phi'(w)|^p\,du\,dv
$$
for $w=u+iv$. Hence
$$
y^{2p-2}|\Phi''(z)|^p \lesssim y^{p-4} \int_{|w-z|\leq y/2} |\Phi'(w)|^p\,du\,dv
\leq y^{p-4} \int_{y/2}^{3y/2} \int_{x-y/2}^{x+y/2} |\Phi'(w)|^p\,du\,dv.
$$
With the change of variables $(u,v) \mapsto (\xi,\eta)$ by $u=x+y\xi$ and $v=y\eta$,
the right-hand side becomes
$$
y^{p-2} \int_{1/2}^{3/2} \int_{-1/2}^{1/2} |\Phi'(x+y\xi+iy \eta)|^p \,d\xi\,d\eta.
$$

Using the inequality of this form, we estimate $\Vert \Phi \Vert_{{\mathcal B}_p^{\#}}$ as
\begin{align*}
\Vert \Phi \Vert_{{\mathcal B}_p^{\#}}^p&=\int_{\mathbb H} y^{2p-2}|\Phi''(z)|^pdxdy\\
&\lesssim  \int_{1/2}^{3/2} \int_{-1/2}^{1/2} \Bigl(\int_{\mathbb H} y^{p-2}|\Phi'(x+y\xi+iy \eta)|^p dxdy \Bigr) d\xi d\eta. 
\end{align*}
Again with the change of variables $(x,y) \mapsto (u,v)$ by $u=x+y\xi$ and $v=y \eta$, the last integral turns out to be
$$
\int_{1/2}^{3/2} \int_{-1/2}^{1/2} \Bigl(\int_{\mathbb H} \Bigl(\frac{v}{\eta} \Bigr)^{p-2}|\Phi'(w)|^p \frac{dudv}{\eta} \Bigr) d\xi d\eta
=\Bigl(\int_{1/2}^{3/2} \frac{d\eta}{\eta^{p-1}}\Bigr)\int_{\mathbb H} v^{p-2}|\Phi'(w)|^p dudv
\asymp \Vert \Phi \Vert_{{\mathcal B}_p}^p.
$$
Thus, $\Vert \Phi \Vert_{{\mathcal B}_p^{\#}} \lesssim \Vert \Phi \Vert_{{\mathcal B}_p}$ is verified.

For the converse, \cite[Lemma~3.2]{LS} essentially gives
$$
\Vert \Phi \Vert_{{\mathcal B}_p} \lesssim \Vert \Phi \Vert_{{\mathcal B}_p^{\#}}+
\Vert \Phi'' \Vert_{{\mathcal A}_\infty},
$$
where $\Vert \Phi'' \Vert_{{\mathcal A}_\infty}$ is defined later in \eqref{Ap} and satisfies
$\Vert \Phi'' \Vert_{{\mathcal A}_\infty} \asymp \Vert \Phi \Vert_{{\mathcal B}_\infty}$ by
\cite[Lemma~6.3]{ShenV}. This yields the stated bound.
\end{proof}

\begin{remark}
In the second statement, for $p>2$ one even has
$\Vert \Phi \Vert_{{\mathcal B}_p} \lesssim \Vert \Phi \Vert_{{\mathcal B}_p^{\#}}$ for
$\Phi \in {\mathcal B}_p^{\#}(\mathbb H) \cap {\mathcal B}_\infty(\mathbb H)$. Indeed,
from
$\Phi'(x+iy)=-i\int_{y}^{y_0} \Phi''(x+it)\,dt+\Phi'(x+iy_0)$ 
and letting $y_0 \to \infty$, we obtain
\begin{equation}\label{int-infty}
\Phi'(x+iy)=-i\int_{y}^{\infty} \Phi''(x+it)\,dt
\end{equation} 
for $x+iy \in \mathbb H^+$
since $\lim_{y_0 \to \infty} \Phi'(x+iy_0)=0$ when $\Phi \in \mathcal B_\infty(\mathbb H)$.
For $p>2$ and $1<q<2$ with $1/p+1/q=1$, this gives
\begin{align*}
|\Phi'(x+iy)| 
&\leq\Bigl(\int_y^\infty \frac{dt}{t^{2-q/p}} \Bigr)^{1/q}\Bigl(\int_y^\infty  t^{2p-3}|\Phi''(x+it)|^p\,dt\Bigr)^{1/p},
\end{align*}
hence 
$$
y^{p-2}|\Phi'(x+iy)|^p \lesssim \int_y^\infty  t^{2p-3}|\Phi''(x+it)|^p\,dt,
$$
and integrating over $\mathbb H$ and exchanging the order of integrals yield $\Vert \Phi \Vert_{{\mathcal B}_p}\lesssim \Vert \Phi \Vert_{{\mathcal B}_p^{\#}}$.
\end{remark}

Identifying functions that differ by a constant, we may regard ${\mathcal B}_p(\mathbb H)$ and $\widehat{\mathcal B}_p(\mathbb H)$ 
as normed spaces with norms $\Vert \cdot\Vert_{{\mathcal B}_p}$ and $\Vert \cdot\Vert_{\widehat{\mathcal B}_p}$,
respectively; under these norms they are complex Banach spaces.

For the unit disk $\mathbb D$, define ${\mathcal B}_p(\mathbb D)$, ${\mathcal B}_p^{\#}(\mathbb D)$, ${\rm BMOA}(\mathbb D)$,
and $\widehat{\mathcal B}_p(\mathbb D)$ analogously by replacing the
hyperbolic density $1/|{\rm Im}\,z|$ on $\mathbb H$ with $2/(1-|z|^2)$ on $\mathbb D$. 
Let $K(z)=(z-i)/(z+i)$ be the Cayley transformation, which maps
$\mathbb H^+$ conformally onto $\mathbb D$ with $K(i)=0$
(and $K(z)=(-z-i)/(-z+i)$ maps $\mathbb H^-$ onto $\mathbb D$ with $K(-i)=0$).
For a function $\Phi$ on $\mathbb H$, write $K_*(\Phi)=\Phi \circ K^{-1}$ for the push-forward to $\mathbb D$.
Then $K_*$ is an isometric isomorphism from
${\mathcal B}_p(\mathbb H)$ onto ${\mathcal B}_p(\mathbb D)$ for $p>1$ (including $p=\infty$),
by conformal invariance.
For the spaces $\widehat{\mathcal B}_p(\mathbb H)$ and $\widehat{\mathcal B}_p(\mathbb D)$,
which involve $\Phi''$, the situation is subtler.

To show that $K_*$ gives a Banach isomorphism between 
$\widehat{\mathcal B}_p(\mathbb H)$ and $\widehat{\mathcal B}_p(\mathbb D)$, we prepare the following lemma.
For a holomorphic function $\Phi$ on $\mathbb H^+$, the seminorm defined by
its derivative in the Hardy space $\mathcal H_1$ is 
\begin{equation}\label{h1H}
\Vert \Phi \Vert_{\dot {\mathcal H}_1^1}=\sup_{y>0}\int_{-\infty}^{\infty} |\Phi'(x+iy)|\,dx,
\end{equation}
and write $\dot {\mathcal H}_1^1(\mathbb H)$ for the corresponding space.
Similarly, for a holomorphic function $\Phi_*$ on $\mathbb D$, set
\begin{equation}\label{h1D}
\Vert \Phi_* \Vert_{\dot {\mathcal H}_1^1}=\sup_{0<r<1} \frac{1}{2\pi}\int_{0}^{2\pi} |\Phi_*'(re^{i\theta})|\,d\theta,
\end{equation}
and denote the space by $\dot {\mathcal H}_1^1(\mathbb D)$. 

\begin{lemma}\label{hardy}
{\rm (i)} 
Every $\Phi \in \mathcal B_\infty(\mathbb H)$ satisfies
$\Vert \Phi \Vert_{\dot {\mathcal H}_1^1} \leq \Vert \Phi \Vert_{{\mathcal B}_1^{\#}}$.
{\rm (ii)} Every holomorphic function $\Phi_*$ on $\mathbb D$ satisfies
$\Vert \Phi_* \Vert_{\dot {\mathcal H}_1^1} \leq C
\bigl(\Vert \Phi_* \Vert_{\mathcal B_1^{\#}}+\Vert \Phi_* \Vert_{{\mathcal B}_\infty}\bigr)$ for some absolute constant $C>0$.
\end{lemma}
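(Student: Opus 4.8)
The plan is to bound the $\dot{\mathcal H}_1^1$-norm of $\Phi$ (or $\Phi_*$) by integrating the second derivative along horizontal (resp.\ radial) lines, which is where the $\mathcal B_1^{\#}$-semi-norm naturally enters.

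For part (i), I would fix $y>0$ and write, for each $x\in\mathbb R$,
$$
\Phi'(x+iy)=\lim_{Y\to\infty}\Phi'(x+iY)-\int_y^\infty \Phi''(x+it)\,i\,dt,
$$
observing that $\Phi\in\mathcal B_\infty(\mathbb H)$ forces $t\,\Phi'(x+it)$ to stay bounded, so $\Phi'(x+iY)\to 0$ as $Y\to\infty$ (the Bloch bound on $|(\mathrm{Im}\,z)\Phi'(z)|$ only gives $\Phi'(x+iY)=O(1/Y)$, which is enough for the limit to vanish). Hence $|\Phi'(x+iy)|\le \int_y^\infty |\Phi''(x+it)|\,dt$. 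Integrating in $x$ and applying Tonelli's theorem,
$$
\int_{-\infty}^\infty |\Phi'(x+iy)|\,dx \le \int_y^\infty\!\!\int_{-\infty}^\infty |\Phi''(x+it)|\,dx\,dt
=\int_y^\infty\!\!\int_{-\infty}^\infty t^2|\Phi''(x+it)|\,\frac{dx}{t^2}\,dt\cdot\frac{1}{t\big|_{?}}.
$$
More precisely, rewriting $|\Phi''(x+it)|=|t^2\Phi''(x+it)|/t^2$ and integrating first over $x$ for each fixed $t$ gives a quantity bounded by $t^{-2}$ times the $x$-slice of the $\mathcal B_1^{\#}$-integrand; since $dt/t^2$ is not integrable on $(y,\infty)$ one must instead keep the full double integral $\int_{\mathbb H}|(\mathrm{Im}\,w)^2\Phi''(w)|\,du\,dv/|\mathrm{Im}\,w|^2$ and note that the region $\{ \mathrm{Im}\,w>y\}$ contributes at most the whole of it, with the factor $1/t^2$ matching the measure $du\,dv/|\mathrm{Im}\,w|^2$ exactly. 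Taking the supremum over $y>0$ yields $\Vert\Phi\Vert_{\dot{\mathcal H}_1^1}\le\Vert\Phi\Vert_{\mathcal B_1^{\#}}$, with no extra constant because the two integrands coincide after this substitution.

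For part (ii), the same idea applies along radii: for $\zeta=re^{i\theta}$ with $0<r<1$, integrate $\Phi_*''$ along the segment from $\zeta$ out to the boundary, so that $\Phi_*'(re^{i\theta})=\Phi_*'(r_0e^{i\theta})-\int$ of $\Phi_*''$ along the radius — but here there is no decay of $\Phi_*'$ at the boundary, so instead I would integrate \emph{inward}, writing $\Phi_*'(re^{i\theta})=\Phi_*'(0)+\int_0^r \Phi_*''(se^{i\theta})e^{i\theta}\,ds$, giving $|\Phi_*'(re^{i\theta})|\le |\Phi_*'(0)|+\int_0^r|\Phi_*''(se^{i\theta})|\,ds$. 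The term $|\Phi_*'(0)|$ is controlled by $\Vert\Phi_*\Vert_{\mathcal B_\infty}$ (the hyperbolic density at $0$ is $2$). For the remaining term, integrate in $\theta$ over $[0,2\pi]$ and exchange order to bring it into the form $\int_0^r\!\int_0^{2\pi}|\Phi_*''(se^{i\theta})|\,d\theta\,ds$; converting to the area measure via the factor $s$ and inserting $(1-s^2)^2$ to match the $\mathcal B_1^{\#}$-integrand, one bounds it by $\sup_{0<s<1}\big((1-s^2)^{-2}\cdot s^{-1}\big)$-weighted pieces. The weight $(1-s^2)^{-2}$ blows up near $s=1$, so one cannot simply dominate; instead split the radial integral at $s=1/2$, handle $s\le 1/2$ by the Bloch bound $|\Phi_*''|\lesssim(1-s^2)^{-2}\Vert\Phi_*\Vert_{\mathcal B_\infty^{\#}}\lesssim\Vert\Phi_*\Vert_{\mathcal B_\infty}$ (using that $\Vert\cdot\Vert_{\mathcal B_\infty^{\#}}\lesssim\Vert\cdot\Vert_{\mathcal B_\infty}$ by the Cauchy estimate), and for $s\ge 1/2$ use $s^{-1}\le 2$ so that $\int_{1/2}^r\!\int_0^{2\pi}|\Phi_*''|\,d\theta\,ds\lesssim \int_{\mathbb D}|(1-|w|^2)^2\Phi_*''(w)|\frac{2\,dudv}{(1-|w|^2)^2}=\Vert\Phi_*\Vert_{\mathcal B_1^{\#}}$ up to an absolute constant. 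Collecting the pieces gives $\Vert\Phi_*\Vert_{\dot{\mathcal H}_1^1}\le C(\Vert\Phi_*\Vert_{\mathcal B_1^{\#}}+\Vert\Phi_*\Vert_{\mathcal B_\infty})$.

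The main obstacle is purely bookkeeping of the weights: in (i) the half-plane geometry makes $dt/t^2$ match the invariant area element so that everything telescopes cleanly with constant $1$, but in (ii) the disk radius $s$ enters the area element as $s\,ds$ rather than $ds$, and the boundary weight $(1-s^2)^{-2}$ is non-integrable, so the $\mathcal B_\infty$-term is genuinely needed and the split at $s=1/2$ (or use of the Bloch bound near the origin) cannot be avoided — this is exactly the reason $\widehat{\mathcal B}_p(\mathbb H)$ and $\widehat{\mathcal B}_p(\mathbb D)$ do not correspond under $K_*$ in the naive way and why Lemma \ref{hardy} is stated with the two different right-hand sides.
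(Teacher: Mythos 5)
Your argument is correct and is essentially the paper's own proof: in both parts you reconstruct $\Phi'$ by integrating $\Phi''$ along vertical (resp.\ radial) segments, use the Bloch bound to control the limit at $i\infty$ (resp.\ the value near the origin), and exploit that for $p=1$ the weight $(\mathrm{Im}\,z)^2$ (resp.\ $((1-|z|^2)/2)^2$) cancels the hyperbolic area density exactly. The only difference is cosmetic: in (ii) the paper starts the radial integration at radius $\varepsilon>0$ and bounds $|\Phi_*'(\varepsilon e^{i\theta})|$ by $\tfrac{2}{1-\varepsilon^2}\Vert\Phi_*\Vert_{\mathcal B_\infty}$, whereas you start at $0$ and handle the $1/s$ factor by splitting at $s=1/2$ with the Cauchy estimate $\Vert\Phi_*\Vert_{\mathcal B_\infty^{\#}}\lesssim\Vert\Phi_*\Vert_{\mathcal B_\infty}$; also note your worry about the weight $(1-s^2)^{-2}$ near $s=1$ is moot, since for $p=1$ that weight cancels against the measure, as your final estimate in fact uses.
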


\begin{proof}
(i) From \eqref{int-infty},
$$
\int_{-\infty}^\infty |\Phi'(x+iy)|\,dx \leq \int_{-\infty}^\infty\int_{y}^{\infty} |\Phi''(x+it)|\,dt\,dx
\leq \int_{\mathbb H} |\Phi''(z)|\,dx\,dy,
$$
and taking the supremum over $y>0$ gives the claim.

(ii) Likewise, $\Phi_*'(re^{i\theta})=\int_{\varepsilon}^r \Phi_*''(te^{i\theta})\,dt+\Phi_*'(\varepsilon e^{i\theta})$ for $0<\varepsilon<r<1$.
Hence
\begin{align}
\frac{1}{2\pi}\int_0^{2\pi} |\Phi_*'(re^{i\theta})| \,d\theta 
&\leq \frac{1}{2\pi}\int_0^{2\pi}
\int_{\varepsilon}^r |\Phi_*''(te^{i\theta})|\,dt\,d\theta+\frac{2}{1-\varepsilon^2}\Vert \Phi_* \Vert_{{\mathcal B}_\infty}\\
&\leq \frac{1}{2\pi \varepsilon}\Vert \Phi_* \Vert_{{\mathcal B}_1^{\#}}+\frac{2}{1-\varepsilon^2}\Vert \Phi_* \Vert_{{\mathcal B}_\infty},
\end{align}
which implies the claim with $C=\min_{0<\varepsilon <1} \max\{\tfrac{1}{2\pi \varepsilon},\tfrac{2}{1-\varepsilon^2}\}$.
\end{proof}

We can now establish the expected correspondence between $\widehat{\mathcal B}_p(\mathbb H)$ and 
$\widehat{\mathcal B}_p(\mathbb D)$. An idea for its proof is in \cite[Section 9]{Pe}.

\begin{theorem}\label{isomorphism}
The push-forward $K_*$ by the Cayley transformation is a Banach isomorphism from $\widehat{\mathcal B}_p(\mathbb H)$ onto 
$\widehat{\mathcal B}_p(\mathbb D)$ for $p \geq 1$. 
\end{theorem}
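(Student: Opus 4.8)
The plan is to split $\|\Phi\|_{\widehat{\mathcal B}_p}=\|\Phi\|_{\mathcal B_p^\#}+\|\Phi\|_{\mathrm{BMOA}}$ and treat the two parts separately under $K_*$. The $\mathrm{BMOA}$-part transports correctly because the hyperbolic metric is $K$-invariant and $\mathrm{BMOA}$ is characterised by the Carleson-measure condition on $|(\mathrm{Im}\,z)\Phi'(z)|^2\,dxdy/|\mathrm{Im}\,z|$, which matches the corresponding condition on $\mathbb D$; thus $\|K_*\Phi\|_{\mathrm{BMOA}(\mathbb D)}\asymp\|\Phi\|_{\mathrm{BMOA}(\mathbb H)}$ (this is the standard equivalence of $\mathrm{BMOA}$ on $\mathbb R$ and on $\mathbb S$, cf.\ \cite{Gi}). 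For $p>1$ this already finishes the proof: by the remarks following Proposition \ref{norm2}, $\|\cdot\|_{\widehat{\mathcal B}_p}$ and $\|\cdot\|_{\mathcal B_p}$ are equivalent semi-norms on both $\mathbb H$ and $\mathbb D$, and $K_*$ is an isometry for $\|\cdot\|_{\mathcal B_p}$. So the content of the theorem is the case $p=1$.

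For $p=1$ the weights in $\|\cdot\|_{\mathcal B_1^\#}$ cancel, leaving $\|\Phi\|_{\mathcal B_1^\#(\mathbb H)}=\int_{\mathbb H}|\Phi''(z)|\,dxdy$, and likewise on $\mathbb D$. A chain-rule computation for $K_*\Phi=\Phi\circ K^{-1}$, with $K'(z)=2i/(z+i)^2$, $K''(z)/K'(z)=-2/(z+i)$ and $|K'(z)|^2\,dxdy=dudv$, gives the exact identities
\begin{equation*}
\|K_*\Phi\|_{\mathcal B_1^\#(\mathbb D)}=\int_{\mathbb H}\Bigl|\Phi''(z)+\frac{2}{z+i}\,\Phi'(z)\Bigr|\,dxdy,\qquad
\|K_*^{-1}\Phi_*\|_{\mathcal B_1^\#(\mathbb H)}=\int_{\mathbb D}\Bigl|\Phi_*''(w)-\frac{2}{1-w}\,\Phi_*'(w)\Bigr|\,dudv .
\end{equation*}
Applying the triangle inequality, both $\|K_*\Phi\|_{\mathcal B_1^\#(\mathbb D)}$ and $\|\Phi\|_{\mathcal B_1^\#(\mathbb H)}$ differ from each other by at most $2\int_{\mathbb H}|\Phi'(z)|/|z+i|\,dxdy$, so the theorem for $p=1$ reduces to the single estimate
\begin{equation*}
\int_{\mathbb H}\frac{|\Phi'(z)|}{|z+i|}\,dxdy\;\lesssim\;\|\Phi\|_{\widehat{\mathcal B}_1(\mathbb H)}\qquad(\Phi\in\widehat{\mathcal B}_1(\mathbb H)),
\end{equation*}
and its mirror image on $\mathbb D$, which the substitution $w=K(z)$ identifies with it.

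To prove this estimate, first note that $\widehat{\mathcal B}_1(\mathbb H)\subset\mathcal B_\infty(\mathbb H)$ by Proposition \ref{norm}(ii), so Lemma \ref{hardy}(i) gives $\Phi'\in H^1(\mathbb H)$ with $\|\Phi'\|_{\dot{\mathcal H}_1^1}\le\|\Phi\|_{\mathcal B_1^\#}$. This has two uses. By Fubini and $\int_{\mathbb R}|\Phi'(x+iy)|\,dx\le\|\Phi'\|_{\dot{\mathcal H}_1^1}$ one gets $\int_{\mathbb H}|\Phi'(z)|/|z+i|^2\,dxdy\lesssim\|\Phi\|_{\mathcal B_1^\#}$, which, via $1/|z+i|\le R/|z+i|^2$ on $\{|z+i|\le R\}$, controls the part of the integral over any fixed bounded region. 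And since the Fourier transform of an $H^1(\mathbb H)$-function is continuous and vanishes off a half-line, $\int_{\mathbb R}\Phi'(x)\,dx=\widehat{\Phi'}(0)=0$; hence, after subtracting a constant, $\Phi$ is bounded and extends continuously to $\overline{\mathbb H}\cup\{\infty\}$ with $\Phi(\infty)=0$. It remains to bound the integral over a neighbourhood $\{|z+i|>R\}$ of $\infty$; there I would write $\Phi'(z)=-\int_{\mathrm{Im}\,z}^{\infty}\Phi''(x+it)\,dt$ (valid since $\Phi\in\mathcal B_\infty$), use Fubini together with the Bloch bound $|\Phi''(z)|\lesssim\|\Phi\|_{\mathcal B_\infty}/(\mathrm{Im}\,z)^2$, and exploit the cancellation provided by $\int_{\mathbb R}\Phi'=0$ (equivalently, $(K_*\Phi)(1)=0$).

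This last region is the main obstacle. The factor $K''/K'=-2/(z+i)$ contributed by the Cayley transformation has critical size $|z+i|^{-1}$ near $\infty$, and any estimate of $|\Phi'|$ there that ignores its oscillation — say $|\Phi'(z)|\le\int_{\mathrm{Im}\,z}^\infty|\Phi''|\,dt$, or $|\Phi'(z)|\lesssim\|\Phi\|_{\mathcal B_\infty}/\mathrm{Im}\,z$ — leads to a logarithmically divergent integral. This is exactly the phenomenon responsible for the degeneration of $\mathcal B_1(\mathbb H)$ to the constants and the reason the $p>1$ argument has no direct analogue; the correct finite bound can only come from the cancellation carried by $\Phi'\in H^1(\mathbb H)$, i.e.\ from the intersection with $\mathrm{BMOA}$ in the definition of $\widehat{\mathcal B}_1$, used through Lemma \ref{hardy} in the spirit of \cite[Section 9]{Pe}. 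Once this estimate is established, the two transformation identities give $\|K_*\Phi\|_{\widehat{\mathcal B}_1(\mathbb D)}\asymp\|\Phi\|_{\widehat{\mathcal B}_1(\mathbb H)}$, and as $K_*$ is a linear bijection between these spaces of holomorphic functions it is the desired Banach isomorphism.
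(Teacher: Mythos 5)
Your overall architecture is the same as the paper's: transport the BMOA part by conformal invariance, dispose of $p>1$ via the equivalence of $\Vert\cdot\Vert_{\widehat{\mathcal B}_p}$ with $\Vert\cdot\Vert_{{\mathcal B}_p}$, and reduce the case $p=1$ to controlling the cross term coming from $K''/K'$, namely the integral $\int_{\mathbb H}|\Phi'(z)|\,|z+i|^{-1}\,dxdy$ (which is indeed identical, under $w=K(z)$, to $\int_{\mathbb D}|\Phi_*'(w)|\,|1-w|^{-1}\,dudv$; compare \eqref{formula2} and \eqref{formula4}). Your chain-rule identities are correct, your use of Lemma \ref{hardy} to place $\Phi'$ in $H^1$ is exactly the paper's move, and your treatment of the bounded region via $\int_{\mathbb H}|\Phi'|\,|z+i|^{-2}\,dxdy\le\Vert\Phi'\Vert_{\dot{\mathcal H}_1^1}$ is fine.

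However, there is a genuine gap at the decisive step, and you flag it yourself ("once this estimate is established"). The mechanism you propose for the region near $\infty$ --- the mean-zero property $\int_{\mathbb R}\Phi'\,dx=\widehat{\Phi'}(0)=0$ combined with $\Phi'(z)=-\int_{{\rm Im}\,z}^{\infty}\Phi''\,dt$ and the Bloch bound on $\Phi''$ --- cannot close the argument as described: the quantity to be bounded is $\int|\Phi'(z)|\,|z+i|^{-1}\,dxdy$ with the absolute value \emph{inside} the integral, so no cancellation of $\Phi'$ can be exploited there, and the Fubini computation you sketch produces an extra factor of order $\log({\rm Im}\,z/|{\rm Re}\,z|)$ against $|\Phi''|$, which is not controlled by $\Vert\Phi\Vert_{{\mathcal B}_1^{\#}}$. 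The correct input is not the vanishing of the mean but the $L^1$ control of the nontangential maximal function of an $H^1$ function: the measure $2\,dxdy/|z+i|$ is a Carleson measure on $\mathbb H$ (and $2\,d\xi d\eta/|1-\zeta|$ on $\mathbb D$), and the Carleson embedding theorem for $H^1$ (\cite[Theorem 9.3]{Du}, \cite[Theorem II.3.9]{Ga}) gives at once
\begin{equation}
\int_{\mathbb H}\frac{|\Phi'(z)|}{|z+i|}\,dxdy\;\lesssim\;\Vert\Phi'\Vert_{\dot{\mathcal H}_1^1}\;\le\;\Vert\Phi\Vert_{{\mathcal B}_1^{\#}},
\end{equation}
which is exactly how the paper obtains \eqref{formula5}. (Verifying the Carleson condition for $dxdy/|z+i|$ is an elementary computation on Carleson boxes.) Replacing your final paragraph by this application of the Carleson embedding theorem completes the proof; everything before it can stand.
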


\begin{proof}
First, by conformal invariance, $\Vert K_*(\Phi) \Vert_{\rm BMOA} \asymp \Vert \Phi \Vert_{\rm BMOA}$:
BMO functions on $\mathbb R$ and $\mathbb S$ correspond under the Cayley transformation
(see \cite[Corollary.~VI.1.3]{Ga}), and BMOA functions are holomorphic functions obtained by the Poisson integral of those functions.

We estimate the ${\mathcal B}_p^{\#}$-seminorms.
Let $\Phi_*=K_*(\Phi)=\Phi \circ K^{-1}$. Changing variables $\zeta=K(z)$ gives
\begin{align}\label{formula1}
& \quad \int_{\mathbb H} |({\rm Im}\,z)^2\, \Phi''(z)|^p \frac{dx\,dy}{|{\rm Im}\,z|^2} \\
&=\int_{\mathbb H} |({\rm Im}\,z)^2\,(\Phi_*''\circ K(z)\cdot K'(z)^2+\Phi_*'\circ K(z)\cdot K''(z))|^p \frac{dx\,dy}{|{\rm Im}\,z|^2} \notag \\
& \leq 2^{p-1}\int_{\mathbb D} \Bigl|\Bigl(\frac{1-|\zeta|^2}{2}\Bigr)^2\Phi_*''(\zeta)\Bigr|^p\frac{4\,d\xi\,d\eta}{(1-|\zeta|^2)^2} \notag\\
&\quad +2^{p-1}\int_{\mathbb D}\Bigl|\Bigl(\frac{1-|\zeta|^2}{2}\Bigr)\Phi_*'(\zeta)\Bigr|^p \Bigl(\frac{1-|\zeta|^2}{|1-\zeta|}\Bigr)^p
\frac{4\,d\xi\,d\eta}{(1-|\zeta|^2)^2}.
\end{align}
Note that $1-|\zeta|^2 \leq 2|1-\zeta|$.

Suppose $\Phi_*\in \widehat{\mathcal B}_p(\mathbb D)$. For $p>1$, \eqref{formula1} yields
$$
\Vert \Phi \Vert_{{\mathcal B}_p^{\#}}^p \leq 2^{p-1}\Vert \Phi_* \Vert_{{\mathcal B}_p^{\#}}^p
+2^{2p-1}\Vert \Phi_* \Vert_{{\mathcal B}_p}^p.
$$
Because $\Vert \Phi_* \Vert_{{\mathcal B}_p} \asymp \Vert \Phi_* \Vert_{{\mathcal B}_p^{\#}}+|\Phi_*'(0)| \lesssim \Vert \Phi_* \Vert_{\widehat{\mathcal B}_p}$ (see \cite[p.~327]{Zhu0}),
we obtain $\Vert \Phi \Vert_{\widehat{\mathcal B}_p} \lesssim \Vert K_*(\Phi) \Vert_{\widehat{\mathcal B}_p}$.
When $p=1$, the second integral in \eqref{formula1} becomes
\begin{equation}\label{formula2}
\int_{\mathbb D}|\Phi_*'(\zeta)| 
\frac{2\,d\xi\,d\eta}{|1-\zeta|}. 
\end{equation}
By Lemma~\ref{hardy}, $\Phi_*\in \dot{\mathcal H}^1_1(\mathbb D)$. Moreover,
$dm^*=2\,d\xi\,d\eta/|1-\zeta|$ is a Carleson measure on $\mathbb D$.
This can be verified by straightforward computation;
indeed, it suffices to show that for a disk $\Delta(1,r)$ with center at $1$ and radius $r>0$, 
$$
\frac{1}{r} \int_{\Delta(1,r)}\frac{d\xi d\eta}{|1-\zeta|} \leq \pi.
$$

Here, we apply the Carleson embedding theorem (see \cite[Theorem 9.3]{Du}, \cite[Theorem II.3.9]{Ga}).
This in particular implies that for any holomorphic function $\Psi$ in the Hardy space ${\mathcal H}_1(\mathbb D)$ 
with norm $\Vert \cdot \Vert_{\mathcal H_1}$ 
and for any Carleson measure $dm^*$ on $\mathbb D$, there exists a constant $c'>0$ depending only on $dm^*$ such that
$\int_{\mathbb D} |\Psi(\zeta)|dm^*(\zeta) \leq c' \Vert \Psi \Vert_{\mathcal H_1}$. Thus,
integral \eqref{formula2} is bounded by $c' \Vert \Phi_* \Vert_{\dot{\mathcal H}^1_1}$.
Plugging this estimate into inequality \eqref{formula1} and using Lemma \ref{hardy}, we obtain that
$$
\Vert \Phi \Vert_{{\mathcal B}_1^{\#}} \leq \Vert K_*(\Phi) \Vert_{{\mathcal B}_1^{\#}}+c'\Vert K_*(\Phi) \Vert_{\dot{\mathcal H}^1_1}
\lesssim \Vert K_*(\Phi) \Vert_{\widehat{\mathcal B}_1}.
$$
This yields $\Vert \Phi \Vert_{\widehat{\mathcal B}_1} \lesssim \Vert K_*(\Phi) \Vert_{\widehat{\mathcal B}_1}$.

Conversely, assume $\Phi \in \widehat{\mathcal B}_p(\mathbb H)$. Likewise to the above computation, we have
\begin{align}\label{formula3}
& \quad \int_{\mathbb D} \Bigl|\Bigl(\frac{1-|\zeta|^2}{2}\Bigr)^2\Phi_*''(\zeta)\Bigr|^p \frac{4\,d\xi\,d\eta}{(1-|\zeta|^2)^2} \notag \\
& \leq 2^{p-1}\int_{\mathbb H} |({\rm Im}\,z)^2\,\Phi''(z)|^p\frac{dx\,dy}{|{\rm Im}\,z|^2}
+2^{p-1}\int_{\mathbb H}|({\rm Im}\,z)\,\Phi'(z)|^p \Bigl(\frac{2\,{\rm Im}\,z}{|z+i|}\Bigr)^{p}\frac{dx\,dy}{|{\rm Im}\,z|^2},
\end{align}
where ${\rm Im}\,z \le |z+i|$. For $p>1$, \eqref{formula3} implies
$$
\Vert K_*(\Phi) \Vert_{{\mathcal B}_p^{\#}}^p \leq 2^{p-1}\Vert \Phi \Vert_{{\mathcal B}_p^{\#}}^p
+2^{2p-1}\Vert \Phi \Vert_{{\mathcal B}_p}^p,
$$
and with Proposition~\ref{new} we get $\Vert K_*(\Phi) \Vert_{\widehat{\mathcal B}_p} \lesssim \Vert \Phi \Vert_{\widehat{\mathcal B}_p}$.
When $p=1$, the second integral on the right of \eqref{formula3} equals
\begin{equation}\label{formula4}
\int_{\mathbb H}|\Phi'(z)| 
\frac{2\,dx\,dy}{|z+i|}.  
\end{equation}
Since $\Phi \in \dot {\mathcal H}_1^1(\mathbb H)$ by Lemma~\ref{hardy} and $dm=2\,dx\,dy/|z+i|$ is a Carleson measure on $\mathbb H$, the Carleson embedding theorem implies that
\eqref{formula4} is bounded by $c''\Vert \Phi \Vert_{\dot{\mathcal H}^1_1}$ where $c''>0$ depends only on $dm$.
Using Lemma \ref{hardy} again for this, we obtain from \eqref{formula3} that
\begin{equation}\label{formula5}
\Vert K_*(\Phi) \Vert_{{\mathcal B}_1^{\#}} \leq \Vert \Phi \Vert_{{\mathcal B}_1^{\#}}
+c''\Vert \Phi \Vert_{\dot{\mathcal H}^1_1} \leq (1+c'')\Vert \Phi \Vert_{{\mathcal B}_1^{\#}},
\end{equation}
which implies $\Vert K_*(\Phi)  \Vert_{\widehat{\mathcal B}_1} \lesssim \Vert \Phi \Vert_{\widehat{\mathcal B}_1}$.
\end{proof}

We conclude the section with equivalent norms for $\Vert \cdot \Vert_{\widehat{\mathcal B}_1}$ on ${\widehat{\mathcal B}_1}(\mathbb H)$ and ${\widehat{\mathcal B}_1}(\mathbb D)$.

\begin{proposition}\label{H1}
{\rm (i)} On $\widehat{\mathcal B}_1(\mathbb D)$ the norm $\Vert \Phi_* \Vert_{\widehat{\mathcal B}_1}$ is equivalent to
$\Vert \Phi_* \Vert_{{\mathcal B}_1^{\#}}
+\Vert \Phi_* \Vert_{{\mathcal B}_\infty}$ and to
$\Vert \Phi_* \Vert_{{\mathcal B}^{\#}_1}+\Vert \Phi_* \Vert_{\dot {\mathcal H}_1^1}$.
{\rm (ii)} On $\widehat{\mathcal B}_1(\mathbb H)$ the norm $\Vert \Phi \Vert_{\widehat{\mathcal B}_1}$ is equivalent to
$\Vert \Phi \Vert_{{\mathcal B}_1^{\#}}
+\Vert \Phi \Vert_{{\mathcal B}_\infty}$ and to
$\Vert \Phi \Vert_{{\mathcal B}_1^{\#}}+\Vert \Phi \Vert_{\dot {\mathcal H}_1^1}$.
\end{proposition}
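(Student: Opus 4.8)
The plan is to establish, on each of the domains $\mathbb H$ and $\mathbb D$, the cyclic chain of inequalities
$$
\Vert \Phi \Vert_{\widehat{\mathcal B}_1}
\ \lesssim\ \Vert \Phi \Vert_{{\mathcal B}_1^{\#}}+\Vert \Phi \Vert_{\dot{\mathcal H}_1^1}
\ \lesssim\ \Vert \Phi \Vert_{{\mathcal B}_1^{\#}}+\Vert \Phi \Vert_{{\mathcal B}_\infty}
\ \lesssim\ \Vert \Phi \Vert_{\widehat{\mathcal B}_1},
$$
which at once yields all the asserted equivalences. Two of the three links are essentially already available. The last one, $\Vert \Phi \Vert_{{\mathcal B}_1^{\#}}+\Vert \Phi \Vert_{{\mathcal B}_\infty}\lesssim\Vert \Phi \Vert_{\widehat{\mathcal B}_1}$, follows immediately from $\Vert \Phi \Vert_{{\mathcal B}_\infty}\lesssim\Vert \Phi\Vert_{\rm BMOA}$, which is Proposition \ref{norm}(ii) (valid on $\mathbb D$ as well as on $\mathbb H$ by the conformal invariance of the Bloch and BMOA seminorms). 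The middle link, $\Vert \Phi \Vert_{{\mathcal B}_1^{\#}}+\Vert \Phi \Vert_{\dot{\mathcal H}_1^1}\lesssim\Vert \Phi \Vert_{{\mathcal B}_1^{\#}}+\Vert \Phi \Vert_{{\mathcal B}_\infty}$, is Lemma \ref{hardy}: part (ii) gives it directly on $\mathbb D$, while on $\mathbb H$ part (i) gives the stronger bound $\Vert \Phi \Vert_{\dot{\mathcal H}_1^1}\le\Vert \Phi \Vert_{{\mathcal B}_1^{\#}}$ as soon as $\Phi\in{\mathcal B}_\infty(\mathbb H)$, which is automatic whenever the right-hand side is finite.

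Hence the only genuine work is the first link, $\Vert \Phi \Vert_{\widehat{\mathcal B}_1}\lesssim\Vert \Phi \Vert_{{\mathcal B}_1^{\#}}+\Vert \Phi \Vert_{\dot{\mathcal H}_1^1}$; since the ${\mathcal B}_1^{\#}$-term is common to both sides, it reduces to the estimate $\Vert \Phi \Vert_{\rm BMOA}\lesssim\Vert \Phi \Vert_{\dot{\mathcal H}_1^1}$. On $\mathbb H$ I would argue directly. First, the sub-mean-value inequality for the subharmonic function $|\Phi'|$ over the disk $\Delta(z,({\rm Im}\,z)/2)\subset\mathbb H$, followed by integration in horizontal slices and the definition of $\Vert\cdot\Vert_{\dot{\mathcal H}_1^1}$, gives the Bloch-type pointwise bound $({\rm Im}\,z)|\Phi'(z)|\lesssim\Vert\Phi\Vert_{\dot{\mathcal H}_1^1}$. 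Then, for a bounded interval $I\subset\mathbb R$ with Carleson square $\widehat I\subset\mathbb H$ over $I$,
\begin{align*}
\frac{1}{|I|}\int_{\widehat I}|({\rm Im}\,z)\Phi'(z)|^2\frac{dxdy}{|{\rm Im}\,z|}
&=\frac{1}{|I|}\int_{\widehat I}\big(({\rm Im}\,z)|\Phi'(z)|\big)\,|\Phi'(z)|\,dxdy\\
&\le\Big(\sup_{z\in\mathbb H}({\rm Im}\,z)|\Phi'(z)|\Big)\,\frac{1}{|I|}\int_{\widehat I}|\Phi'(z)|\,dxdy
\ \lesssim\ \Vert\Phi\Vert_{\dot{\mathcal H}_1^1}^2,
\end{align*}
where the last step uses $\int_{\widehat I}|\Phi'(z)|\,dxdy=\int_0^{|I|}\big(\int_I|\Phi'(x+iy)|\,dx\big)\,dy\le|I|\,\Vert\Phi\Vert_{\dot{\mathcal H}_1^1}$ together with the pointwise bound just obtained. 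Taking the supremum over $I$ gives $\Vert\Phi\Vert_{\rm BMOA}\lesssim\Vert\Phi\Vert_{\dot{\mathcal H}_1^1}$, closing the cycle for $\widehat{\mathcal B}_1(\mathbb H)$.

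For $\widehat{\mathcal B}_1(\mathbb D)$ the same scheme works after replacing $1/|{\rm Im}\,z|$ by $2/(1-|z|^2)$, the disks $\Delta(z,({\rm Im}\,z)/2)$ by $\Delta(z,(1-|z|)/2)$, and horizontal slices by circular slices: one obtains the pointwise bound $(1-|z|^2)|\Phi_*'(z)|\lesssim\Vert\Phi_*\Vert_{\dot{\mathcal H}_1^1}$ and the Carleson-box estimate $\int_{\widehat I}|\Phi_*'|\,dxdy\lesssim|I|\,\Vert\Phi_*\Vert_{\dot{\mathcal H}_1^1}$, whence $\Vert\Phi_*\Vert_{\rm BMOA}\lesssim\Vert\Phi_*\Vert_{\dot{\mathcal H}_1^1}$. (Alternatively, on $\mathbb D$ one may invoke Hardy's inequality: $\Phi_*'\in H^1(\mathbb D)$ forces the Taylor coefficients $\{b_n\}$ of $\Phi_*$ to satisfy $\sum_n|b_n|<\infty$, so $\Phi_*-\Phi_*(0)\in H^\infty(\mathbb D)$ with $\Vert\Phi_*-\Phi_*(0)\Vert_\infty\lesssim\Vert\Phi_*\Vert_{\dot{\mathcal H}_1^1}$, and then $\Vert\Phi_*\Vert_{\rm BMOA}\lesssim\Vert\Phi_*\Vert_\infty$ since $H^\infty\subset{\rm BMOA}$.) The main obstacle is precisely this BMOA bound $\Vert\Phi\Vert_{\rm BMOA}\lesssim\Vert\Phi\Vert_{\dot{\mathcal H}_1^1}$; the other two links of the cycle are direct citations of Proposition \ref{norm}(ii) and Lemma \ref{hardy}, and combining all three links proves the pairwise equivalence of the three norms on both $\widehat{\mathcal B}_1(\mathbb H)$ and $\widehat{\mathcal B}_1(\mathbb D)$.
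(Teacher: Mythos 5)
Your proposal is correct, and its overall skeleton is the same as the paper's: both reduce the statement to the cyclic comparison $\Vert \Phi \Vert_{\widehat{\mathcal B}_1} \lesssim \Vert \Phi \Vert_{{\mathcal B}_1^{\#}}+\Vert \Phi \Vert_{\dot{\mathcal H}_1^1} \lesssim \Vert \Phi \Vert_{{\mathcal B}_1^{\#}}+\Vert \Phi \Vert_{{\mathcal B}_\infty} \lesssim \Vert \Phi \Vert_{\widehat{\mathcal B}_1}$, and the two easy links are handled exactly as you do, by Proposition \ref{norm}(ii) and Lemma \ref{hardy}. The difference lies in the key link. The paper never proves $\Vert \Phi \Vert_{\rm BMOA} \lesssim \Vert \Phi \Vert_{\dot{\mathcal H}_1^1}$ by itself: on $\mathbb D$ it keeps the ${\mathcal B}_1^{\#}$ term and passes through a Besov norm with $p>1$, namely $\Vert \Phi_* \Vert_{\rm BMOA} \lesssim \Vert \Phi_* \Vert_{{\mathcal B}_p} \asymp \Vert \Phi_* \Vert_{{\mathcal B}_p^{\#}}+|\Phi_*'(0)| \lesssim \Vert \Phi_* \Vert_{{\mathcal B}_1^{\#}}+\Vert \Phi_* \Vert_{\dot{\mathcal H}_1^1}$ (Proposition \ref{norm}(iii), the disk analogue of Proposition \ref{norm2}, and the trivial bound $|\Phi_*'(0)| \leq \Vert \Phi_* \Vert_{\dot{\mathcal H}_1^1}$), and then it obtains the half-plane case by transferring this estimate through the Cayley transformation, using \eqref{formula5}, the bound $\Vert K_*(\Phi)\Vert_{\dot{\mathcal H}_1^1} \lesssim \Vert \Phi \Vert_{\dot{\mathcal H}_1^1}$, and the conformal invariance of BMOA. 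You instead prove the stronger inequality $\Vert \Phi \Vert_{\rm BMOA} \lesssim \Vert \Phi \Vert_{\dot{\mathcal H}_1^1}$ directly and separately on each domain, via the sub-mean-value Bloch-type bound $({\rm Im}\,z)|\Phi'(z)| \lesssim \Vert \Phi \Vert_{\dot{\mathcal H}_1^1}$ combined with the elementary Carleson-square estimate $\int_{\widehat I}|\Phi'|\,dxdy \leq |I|\,\Vert \Phi \Vert_{\dot{\mathcal H}_1^1}$ (or, on $\mathbb D$, via Hardy's inequality giving $\dot{\mathcal H}_1^1 \subset H^\infty + \mathbb C \subset {\rm BMOA}$); both arguments are sound. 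What each approach buys: the paper's route recycles machinery it has already set up (the norm comparisons of Section 2 and the Cayley-transfer inequality from Theorem \ref{isomorphism}), whereas yours is self-contained, treats $\mathbb H$ and $\mathbb D$ symmetrically without any transfer, and yields the slightly sharper by-product that the $\dot{\mathcal H}_1^1$ seminorm alone dominates the BMOA seminorm, so the ${\mathcal B}_1^{\#}$ term is not needed in that link.
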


\begin{proof} 
(i) By using the facts that
$\Vert \Phi_* \Vert_{{\mathcal B}_p^{\#}} \lesssim \Vert \Phi_* \Vert_{{\mathcal B}_1^{\#}}$
for any $p>1$ (which is the same as Proposition \ref{norm2})
and $|\Phi_*'(0)| \leq \Vert \Phi_* \Vert_{\dot {\mathcal H}_1^1}$, we obtain 
\begin{equation}\label{formula6}
\Vert \Phi_* \Vert_{\rm BMOA} \lesssim
\Vert \Phi_* \Vert_{{\mathcal B}_p} \asymp
\Vert \Phi_* \Vert_{{\mathcal B}_p^{\#}}+|\Phi_*'(0)| \lesssim
\Vert \Phi_* \Vert_{{\mathcal B}_1^{\#}}+\Vert \Phi_* \Vert_{\dot {\mathcal H}_1^1}. 
\end{equation}
Hence 
$\Vert \Phi_* \Vert_{\widehat{\mathcal B}_1} \lesssim
\Vert \Phi_* \Vert_{{\mathcal B}_1^{\#}}+\Vert \Phi_* \Vert_{\dot {\mathcal H}_1^1}$.
The bound $\Vert \Phi_* \Vert_{{\mathcal B}_1^{\#}}+\Vert \Phi_* \Vert_{{\mathcal B}_\infty} 
\lesssim \Vert \Phi_* \Vert_{\widehat{\mathcal B}_1}$ is immediate.
Finally, $\Vert \Phi_* \Vert_{{\mathcal B}_1^{\#}}+\Vert \Phi_* \Vert_{\dot {\mathcal H}_1^1}
\lesssim \Vert \Phi_* \Vert_{{\mathcal B}_1^{\#}}+\Vert \Phi_* \Vert_{{\mathcal B}_\infty}$ follows from Lemma \ref{hardy}.

(ii) Transfer the estimate for $\Phi_*= K_*(\Phi)$ on $\mathbb D$ back to $\Phi$ on $\mathbb H$.
From \eqref{formula5}, 
$\Vert K_*(\Phi) \Vert_{{\mathcal B}_1^{\#}} \lesssim \Vert \Phi \Vert_{{\mathcal B}_1^{\#}}$.
Moreover, $\Vert K_*(\Phi)\Vert_{\dot {\mathcal H}_1^1} \lesssim 
\Vert \Phi \Vert_{\dot {\mathcal H}_1^1}$. Indeed, the line integral along the horizontal line in \eqref{h1H}
is transfered by $K$ to the line integral along a horocycle in $\mathbb D$ tangent at $1$, which dominates
the integral along the circle in \eqref{h1D}; see the argument in \cite[Section 11.1]{Du}. 
Thus, by \eqref{formula6},
$$
\Vert \Phi \Vert_{\rm BMOA} \asymp \Vert K_*(\Phi) \Vert_{\rm BMOA} 
\lesssim \Vert K_*(\Phi) \Vert_{{\mathcal B}_1^{\#}}+\Vert K_*(\Phi) \Vert_{\dot {\mathcal H}_1^1}
\lesssim \Vert \Phi \Vert_{{\mathcal B}_1^{\#}}+\Vert \Phi \Vert_{\dot {\mathcal H}_1^1},
$$
which implies 
$\Vert \Phi \Vert_{\widehat{\mathcal B}_1} \lesssim
\Vert \Phi \Vert_{{\mathcal B}_1^{\#}}+\Vert \Phi \Vert_{\dot {\mathcal H}_1^1}$.
The remaining implications are as in (i), again using Lemma \ref{hardy}.
\end{proof}

\begin{remark}\label{normHD}
Proposition \ref{H1} implies in particular that 
$$
\widehat{\mathcal B}_1(\mathbb H)={\mathcal B}_1^{\#}(\mathbb H) \cap {\mathcal B}_\infty(\mathbb H)
={\mathcal B}_1^{\#}(\mathbb H) \cap \dot {\mathcal H}_1^1(\mathbb H),
$$
and in fact ${\mathcal B}_1^{\#}(\mathbb H) \nsubseteq {\mathcal B}_\infty(\mathbb H)$ and
${\mathcal B}_1^{\#}(\mathbb H) \nsubseteq \dot{\mathcal H}_1^1(\mathbb H)$.
By contrast, ${\mathcal B}_1^{\#}(\mathbb D) \subset {\mathcal B}_\infty(\mathbb D)$ since
every function in ${\mathcal B}_1^{\#}(\mathbb D)$ is bounded (see \cite[Theorem 5.19]{Zhu}), hence
${\mathcal B}_1^{\#}(\mathbb D) \subset \dot{\mathcal H}_1^1(\mathbb D)$ 
by Lemma \ref{hardy}. Consequently, ${\mathcal B}_1^{\#}(\mathbb D)=\widehat {\mathcal B}_1(\mathbb D)$ by
Proposition \ref{H1}.
\end{remark}

\begin{remark}\label{normsupply}
In defining $\widehat{\mathcal B}_p(\mathbb H)$ we included the BMOA seminorm $\Vert \Phi \Vert_{\rm BMOA}$,
but one could equally well use $\Vert \Phi \Vert_{\mathcal B_\infty}$ or $\Vert \Phi \Vert_{\dot{\mathcal H}_1^1}$.
The specific choice is not essential; our goals are twofold:
(1) to ensure that the seminorm on $\widehat{\mathcal B}_p(\mathbb H)$ annihilates only constants, and
(2) to preserve the Banach isomorphism between 
$\widehat{\mathcal B}_p(\mathbb H)$ and $\widehat{\mathcal B}_p(\mathbb D)$ 
under the Cayley transformation.
\end{remark}

\section{The pre-Schwarzian derivative map}\label{3}

We consider the properties of conformal mappings induced by integrable Beltrami coefficients.
A measurable function $\mu$ on $\mathbb H$ with $\Vert \mu \Vert_\infty<1$ is called 
a {\it Beltrami coefficient}. The set of all Beltrami coefficients on $\mathbb H$ is denoted by
$M(\mathbb H)$, which is the open unit ball of $L_\infty(\mathbb H)$ with 
the supremum norm $\Vert \mu \Vert_\infty$. 

\begin{definition}
For $p \geq 1$, the space of $p$-integrable
Beltrami coefficients is defined by
$$
M_p(\mathbb H)=\Bigl\{\mu \in M(\mathbb H) \mid 
\Vert \mu \Vert_p=\Bigl(\int_{\mathbb H} |\mu(z)|^p\frac{dxdy}{|{\rm Im}\,z|^2}\Bigr)^{1/p}<\infty\Bigr\}.
$$ 
We equip $M_p(\mathbb H)$ with the norm $\Vert \mu \Vert_p+\Vert \mu \Vert_\infty$.
\end{definition}

For $\mu \in M_p(\mathbb H^+)$, we denote by $F^\mu$ the normalized conformal homeomorphism of $\mathbb H^-$ that extends quasiconformally to $\mathbb C$ with complex dilatation $\mu$ on $\mathbb H^+$. 
The normalization is given by fixing
the three points $0$, $1$, and $\infty$. For a conformal homeomorphism $F:\mathbb H^- \to \mathbb C$,
the pre-Schwarzian derivative $N_F$ and the Schwarzian derivative $S_F$ are defined by
$$
N_F=(\log F')'\ ; \quad S_F=(N_F)'-\tfrac12(N_F)^2.
$$
For the conformal homeomorphism $F^\mu$ of $\mathbb H^-$ with $\mu \in M(\mathbb H^+)$,
let $L(\mu)=\log (F^\mu)'$ and $S(\mu)=S_{F^\mu}$. We call the maps $L$ and $S$ on $M(\mathbb H^+)$
the {\it pre-Schwarzian} and the {\it Schwarzian derivative maps}.

For $p \geq 1$, we define the norm
\begin{equation}\label{Ap}
\Vert \Phi\Vert_{{\mathcal A}_p}=\Bigl(\int_{\mathbb H} |({\rm Im}\,z)^2\, \Phi(z)|^p \frac{dxdy}{|{\rm Im}\,z|^2}\Bigr)^{1/p}
\end{equation}
for holomorphic functions $\Phi$ on $\mathbb H$. For $p=\infty$,
we set $\Vert \Phi\Vert_{{\mathcal A}_\infty}=\sup_{z \in \mathbb H}|({\rm Im}\,z)^2\, \Phi(z)|$.
The set of all such $\Phi$ with $\Vert \Phi\Vert_{{\mathcal A}_p}<\infty$
is denoted by ${\mathcal A}_p(\mathbb H)$, which is a complex Banach space with this norm. 

The Schwarzian derivative map $S$ on $M_p(\mathbb H^+)$ has been thoroughly studied. In addition,
we obtain the following result; see \cite[Lemma 3.2]{WM-1}. 
Remark \ref{S-case} below outlines the proof of the first statement.

\begin{proposition}\label{S-holo}
For $p \geq 1$, there exists a constant $\widetilde C_p>0$ such that
the Schwarzian derivative map $S$ satisfies $\Vert S(\mu)\Vert_{\mathcal A_p} \leq \widetilde C_p \Vert \mu \Vert_p$
for every $\mu \in M_p(\mathbb H^+)$. Moreover,
$S:M_p(\mathbb H^+) \to {\mathcal A}_p(\mathbb H^-)$ 
is holomorphic.
\end{proposition}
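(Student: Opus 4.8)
Plan of proof. The plan is to separate the soft part (holomorphy) from the hard part (the $\mathcal A_p$-estimate). That $\mu\mapsto F^\mu$ depends holomorphically on $\mu$ in the $L_\infty$-topology is the classical theorem of Ahlfors and Bers, and since $S_{F^\mu}$ then depends on $\mu$ locally uniformly on $\mathbb H^-$ while $\Vert S_{F^\mu}\Vert_{\mathcal A_\infty}$ is bounded by a universal constant (Nehari's bound for univalent functions), the map $S:M(\mathbb H^+)\to\mathcal A_\infty(\mathbb H^-)$ is already holomorphic. Hence the only real content is the estimate $\Vert S(\mu)\Vert_{\mathcal A_p}\le\widetilde C_p\Vert\mu\Vert_p$, together with its localized, Lipschitz-type counterpart $\Vert S(\mu)-S(\mu_0)\Vert_{\mathcal A_p}\lesssim\Vert\mu-\mu_0\Vert_p$ valid for $\mu,\mu_0$ in a fixed ball of $M(\mathbb H^+)$. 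Granting these, $S:M_p(\mathbb H^+)\to\mathcal A_p(\mathbb H^-)$ is locally bounded; composing with the continuous inclusion $\mathcal A_p(\mathbb H^-)\hookrightarrow\mathcal A_\infty(\mathbb H^-)$ (a sub-mean-value estimate, as in Proposition \ref{norm}(i)), whose point evaluations are continuous and separate $\mathcal A_p(\mathbb H^-)$, and using that $S$ is holomorphic into $\mathcal A_\infty(\mathbb H^-)$, a locally bounded map whose scalar coordinate functions are holomorphic is itself holomorphic; this gives the holomorphy of $S$ into $\mathcal A_p(\mathbb H^-)$.

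The backbone of the estimate is the $L^p$-boundedness, for every $p\in[1,\infty]$, of the integral operator
$$
(A\nu)(\zeta)=({\rm Im}\,\zeta)^2\iint_{\mathbb H^+}\frac{\nu(z)}{(z-\zeta)^4}\,dxdy,\qquad \zeta\in\mathbb H^-,
$$
viewed as a map $L^p(\mathbb H^+,d\lambda_+)\to L^p(\mathbb H^-,d\lambda_-)$, where $d\lambda_\pm=dxdy/({\rm Im}\,z)^2$ is the hyperbolic area element. Indeed, expressing the integrand relative to $d\lambda_+$ gives the kernel $k(\zeta,z)=({\rm Im}\,\zeta)^2({\rm Im}\,z)^2/(z-\zeta)^4$, and if $\zeta^*=\bar\zeta\in\mathbb H^+$ is the reflection of $\zeta$, a one-line computation yields $|z-\zeta|^2=2\,({\rm Im}\,z)({\rm Im}\,\zeta^*)\,(\cosh d_{\mathbb H}(z,\zeta^*)+1)$, hence $|k(\zeta,z)|=\tfrac14(\cosh d_{\mathbb H}(z,\zeta^*)+1)^{-2}$, a radial function of the hyperbolic distance which is $\asymp e^{-2 d_{\mathbb H}(z,\zeta^*)}$ at large distance. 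Since $\int_{\mathbb H}e^{-2 d_{\mathbb H}(w,w_0)}\,d\lambda(w)<\infty$ and the reflection $\zeta\mapsto\zeta^*$ preserves the hyperbolic measure, both Schur marginals $\sup_\zeta\iint|k(\zeta,z)|\,d\lambda_+(z)$ and $\sup_z\iint|k(\zeta,z)|\,d\lambda_-(\zeta)$ are finite, so Schur's test gives the asserted bound with a $p$-independent constant. The same mechanism covers iterated kernels built from the Cauchy kernel $(z-z')^{-2}$: chaining finitely many such links again produces a kernel dominated by a radial, exponentially decaying function of hyperbolic distance, hence an operator bounded on $L^p(d\lambda)$. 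These are the half-plane analogues of the operator estimates in \cite[Chapter 5]{Zhu}.

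From this lemma one deduces the estimate as follows. For small $\Vert\mu\Vert_\infty$ the classical variational formula gives the convergent expansion $S(\mu)=-\tfrac{6}{\pi}A\mu+R(\mu)$, in which the linear term is exactly $A\mu$ (the normalization-dependent parts cancel in the Schwarzian) and $R(\mu)$ is the sum over $n\ge2$ of the multilinear terms arising from the Neumann series for $\partial_z f^\mu$ obtained by iterating the Beurling transform against $\mu$; the $n$-th term has a kernel made of $n-1$ Cauchy links, so by the lemma its $\mathcal A_p$-norm is $\le C_0(C_1\Vert\mu\Vert_\infty)^{n-1}\Vert\mu\Vert_p$. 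Summing proves $\Vert S(\mu)\Vert_{\mathcal A_p}\lesssim\Vert\mu\Vert_p$ whenever $\Vert\mu\Vert_\infty$ lies below a $p$-dependent threshold, and running the same computation with the pair $(\mu,\mu_0)$ yields the Lipschitz estimate on small balls. The main obstacle is to reach all of $M_p(\mathbb H^+)$: for $\Vert\mu\Vert_\infty$ close to $1$ the Neumann series need not converge in $L^p$ when $p\ne2$, so one must instead use a non-perturbative integral representation of $S_{F^\mu}$ in terms of $\mu$, $1-|\mu|^2$ and the global quasiconformal map $f^\mu$, for which the pointwise distortion bounds on $\partial_z f^\mu$ and on $(f^\mu(z)-f^\mu(\zeta))^{-1}$ still force the kernel to be dominated, with a constant depending only on $\Vert\mu\Vert_\infty$, by a radial exponentially decaying function of hyperbolic distance, so that the Schur lemma applies once more. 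This last step is precisely the content of \cite[Lemma 3.2]{WM-1} in its corrected arXiv form (cf. Remark \ref{S-case}), which I would invoke as a black box; combining it with the key lemma and the soft argument of the first paragraph completes the proof.
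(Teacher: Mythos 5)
Your Schur-test lemma for the kernel $({\rm Im}\,\zeta)^2({\rm Im}\,z)^2/(z-\zeta)^4$ is correct (it is essentially the $\mu=0$ instance of the paper's estimates, cf.\ \eqref{hoelder} and \eqref{q=0}), and your soft upgrade of holomorphy (local boundedness plus pointwise/G\^ateaux holomorphy) is the same device the paper uses. The genuine gap is in the main estimate. The proposition asserts $\Vert S(\mu)\Vert_{\mathcal A_p}\le\widetilde C_p\Vert\mu\Vert_p$ with $\widetilde C_p$ depending only on $p$, for \emph{every} $\mu\in M_p(\mathbb H^+)$, hence uniformly as $\Vert\mu\Vert_\infty\to1$. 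Your perturbative argument only reaches a $p$-dependent smallness threshold for $\Vert\mu\Vert_\infty$, and even there the bound on the $n$-th term of the Neumann series is unjustified: the intermediate links are Beurling-type kernels $1/(z-z')^2$, which are singular and not absolutely integrable, so ``chaining'' them does not produce an exponentially decaying radial kernel to which Schur's test applies; worse, for $p=1$ --- exactly the case this paper adds --- the Beurling transform is not bounded on $L^1$, so the claimed bound $C_0(C_1\Vert\mu\Vert_\infty)^{n-1}\Vert\mu\Vert_p$ has no proof. For general $\mu$ you then invoke \cite[Lemma 3.2]{WM-1} ``as a black box''; but that lemma \emph{is} the statement under discussion (it is precisely what the paper cites for Proposition \ref{S-holo}), so at that point you are assuming what is to be proved, and your guessed description of its proof (an exact non-perturbative integral representation of $S_{F^\mu}$ in terms of $\mu$, $1-|\mu|^2$ and $f^\mu$ with an absolutely convergent kernel) is not how the known argument runs.

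The missing idea, which is the paper's route (Remark \ref{S-case} combined with the proof of Lemma \ref{P-bdd}, following \cite[Chapter I]{TT}), is to estimate the derivative of $S$ at an \emph{arbitrary} base point $\mu$, not only at $0$: one has $d_\mu S(\nu)(F^{-1}(\zeta))\,(F^{-1})'(\zeta)^2=-\tfrac{6}{\pi}\int_{\Omega^+} G_*(\nu)(w)\,(w-\zeta)^{-4}\,dudv$ with $F=F^\mu$ and $\Omega^\pm=F(\mathbb H^\pm)$, and after the change of variables to $\Omega^\pm$, H\"older's inequality and the Koebe-type density bounds $\rho_\pm(\cdot)\ge 1/(2d(\cdot,\partial\Omega^\pm))$ give $\Vert d_\mu S(\nu)\Vert_{\mathcal A_p}\le c_p\Vert\nu\Vert_p$ with $c_p$ independent of $\mu$, for all $p\ge1$ (with the usual modification $q=\infty$ at $p=1$). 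The fundamental theorem of calculus along $t\mapsto t\mu$ then yields $\Vert S(\mu)\Vert_{\mathcal A_p}\le c_p\Vert\mu\Vert_p$ on all of $M_p(\mathbb H^+)$, with no smallness assumption and no Neumann series. Without this (or some equivalent non-perturbative mechanism carried out rather than cited), your proposal does not establish the first statement of the proposition.
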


We note that the holomorphy of $S$ follows from its local boundedness in this situation.
Since this result will be used repeatedly later, we state it in a more general form.
Let $X$ and $Y$ be complex Banach spaces and let $W \subset X$ be a domain.
A map $J:W \to Y$ is called {\it G\^ateaux holomorphic} if, for any $w \in W$ and $x \in X$,
the function $J(w+\zeta x)$ is holomorphic in $\zeta \in \mathbb C$ into $Y$ in some neighborhood of the origin.
The following result is known (see \cite[Theorem 14.9]{Ch}).

\begin{proposition}\label{G-holomorphic}
If $J:W \to Y$ is locally bounded and G\^ateaux holomorphic, then $J$ is holomorphic.
\end{proposition}

Moreover, when $Y$ is a complex Banach space of holomorphic functions, 
the G\^ateaux holomorphy can be verified in several ways; see \cite[Lemma V.5.1]{Le} and \cite[Lemma 6.1]{WM-2}.

We prove the same claim for the pre-Schwarzian derivative map $L$.
First, we show it under a special assumption on $p$. This is mentioned without proof
in the proof of \cite[Theorem 6.10]{WM-4}.

\begin{lemma}\label{P-bdd}
For $p>2$, there exists a constant $C_p>0$ depending only on $p$ such that 
the pre-Schwarzian derivative map $L$
satisfies $\Vert L(\mu) \Vert_{{\mathcal B}_p} \leq C_p \Vert \mu \Vert_p$ for
every $\mu \in M_p(\mathbb H^+)$. 
\end{lemma}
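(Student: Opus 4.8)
The plan is to mimic, for the pre-Schwarzian map $L$, the argument used for the Schwarzian map $S$ in \cite{WM-1}: read $(L(\mu))'=N_{F^{\mu}}$ off the Beltrami equation and reduce the $\mathcal{B}_{p}$-estimate to a weighted $L^{p}$-bound for a Cauchy-type integral operator. The operator governing $L$ has a kernel one order less singular than the one governing $S$, and it is exactly this loss of one order that confines the bound to $p>2$.

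Let $g$ be the quasiconformal self-map of $\widehat{\mathbb{C}}$ with $\bar\partial g=\mu\,\partial g$ on $\mathbb{H}^{+}$ and $\bar\partial g=0$ on $\mathbb{H}^{-}$, normalised by $g(z)-z\to 0$ as $z\to\infty$. Since $F^{\mu}$ then differs from $g$ by an affine post-composition, we have $N_{F^{\mu}}=N_{g}$, so that $\|L(\mu)\|_{\mathcal{B}_{p}}=\|\log g'\|_{\mathcal{B}_{p}}$. Writing $\omega=\mu\,\partial g$, supported on $\mathbb{H}^{+}$, one has for $z\in\mathbb{H}^{-}$
$$g'(z)=1-\frac{1}{\pi}\iint_{\mathbb{H}^{+}}\frac{\omega(w)}{(w-z)^{2}}\,dudv,\qquad g''(z)=-\frac{2}{\pi}\iint_{\mathbb{H}^{+}}\frac{\omega(w)}{(w-z)^{3}}\,dudv,$$
whence $(L(\mu))'=g''/g'$ and
$$\|L(\mu)\|_{\mathcal{B}_{p}}^{p}=\int_{\mathbb{H}^{-}}\left|\frac{({\rm Im}\,z)\,g''(z)}{g'(z)}\right|^{p}\,\frac{dxdy}{|{\rm Im}\,z|^{2}}.$$

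The analytic core is the bound
$$\left\|\,({\rm Im}\,z)\iint_{\mathbb{H}^{+}}\frac{\phi(w)}{(w-z)^{3}}\,dudv\,\right\|_{L^{p}(\mathbb{H}^{-},\ dxdy/|{\rm Im}\,z|^{2})}\le A_{p}\,\|\phi\|_{L^{p}(\mathbb{H}^{+},\ dudv/|{\rm Im}\,w|^{2})},$$
with $A_{p}$ depending only on $p$, which I would prove by Schur's test with the constant majorant. After absorbing the weights, the kernel is comparable to $|{\rm Im}\,z|^{1-2/p}({\rm Im}\,w)^{2/p}|w-z|^{-3}$ with $z\in\mathbb{H}^{-}$ and $w\in\mathbb{H}^{+}$, so $|w-z|^{-3}$ is never singular. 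Using $\int_{\mathbb{R}}(u^{2}+a^{2})^{-3/2}\,du=2a^{-2}$ (with $a={\rm Im}\,w-{\rm Im}\,z>0$) and a rescaling, the integral of the kernel over $w\in\mathbb{H}^{+}$ reduces to $2\int_{0}^{\infty}\tau^{2/p}(1+\tau)^{-2}\,d\tau$ and the integral over $z\in\mathbb{H}^{-}$ to $2\int_{0}^{\infty}\sigma^{1-2/p}(1+\sigma)^{-2}\,d\sigma$. The second is finite for every $p>1$; the first is finite \emph{precisely when $p>2$}, since its integrand decays like $\tau^{2/p-2}$ as $\tau\to\infty$. (For the Schwarzian the exponent $3$ is replaced by $4$ and the analogous integral already converges for $p\ge1$; this is the only structural difference between $L$ and $S$ here.)

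It then remains to pass from this linear kernel applied to $\omega$ to the full quantity $g''/g'$. Two non-linear effects occur: replacing $\omega=\mu\,\partial g$ by $\mu$, and dividing by $g'$. Both are handled as in the proof of \cite[Lemma 3.2]{WM-1}, by transplanting the coefficient through $g$, changing variables, and using Koebe distortion for $g$ together with the comparability of the $p$-norm of a Beltrami coefficient and of its push-forward; the various constants combine to the stated bound. (For the local boundedness of $L$ near $\mu=0$ one may also avoid the representation, using $(L(\mu))''=S(\mu)+\tfrac{1}{2}\,((L(\mu))')^{2}$ together with Proposition~\ref{S-holo}, the equivalence $\|\Phi\|_{\mathcal{B}_{p}}\asymp\|\Phi\|_{\mathcal{B}_{p}^{\#}}+|\Phi'(-i)|$, the universal Bloch bound $\|L(\mu)\|_{\mathcal{B}_{\infty}}\lesssim1$, and a connectedness argument along $t\mapsto L(t\mu)$.) The main obstacle is the division by $g'$: because $1/g'$ need not be bounded near $\partial\mathbb{H}^{-}$ while $|{\rm Im}\,z|^{-2}$ is not an $A_{p}$-weight, one cannot simply move the reciprocal inside the integral, and must instead exploit the geometry of the quasiconformal extension to control it --- precisely the technical step carried out for $S$ in \cite{WM-1} and imitated here.
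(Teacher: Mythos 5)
Your Schur-test computation is sound and it does locate the real reason for the restriction $p>2$: with the kernel $(w-z)^{-3}$ and the weights ${\rm Im}\,z$, ${\rm Im}\,w$, the row integral $\int_0^\infty \tau^{2/p}(1+\tau)^{-2}d\tau$ converges exactly for $p>2$. This is the counterpart of the estimates the paper makes via H\"older's inequality with conjugate exponent $q\in(1,2)$ together with the bounds \eqref{q<2} and \eqref{q=0}. But what you have actually proved is only the linear model estimate, i.e.\ essentially the bound for the derivative of $L$ at the origin; the decisive non-linear step, passing from $\iint \omega(w)(w-z)^{-3}dudv$ with $\omega=\mu\,\partial g$ to $N_{F^\mu}=g''/g'$ with a constant depending only on $p$, is not carried out, and it is not routine. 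First, $\Vert\mu\,\partial g\Vert_p$ (hyperbolic $L^p$ on $\mathbb H^+$) is not controlled by $\Vert\mu\Vert_p$: $\partial g$ of a quasiconformal map is not pointwise comparable to $1$ (it is only locally $L^{2+\varepsilon}$), so "replacing $\omega$ by $\mu$" costs more than a constant. Second, the division by $g'$ destroys the weighted estimate you proved: by Koebe, $1/|g'(z)|\asymp |{\rm Im}\,z|/d(g(z),\partial\Omega^-)$ on $\mathbb H^-$, so the relevant weight becomes $({\rm Im}\,z)^2/d(g(z),\partial\Omega^-)$, which depends on $g$ itself, and $|g'|$ has no positive lower bound uniform over $\mu\in M_p(\mathbb H^+)$ (the Neumann-series argument only helps when $\Vert\mu\Vert_\infty$ is small). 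You acknowledge this as "the main obstacle" but resolve it only by pointing to \cite{WM-1}; that pointer is inaccurate, because the proof there (as in Takhtajan--Teo, and as in the present paper) does not take your route at all.

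What the paper does instead is precisely the mechanism missing from your proposal: it uses the exact variational formula \eqref{derivative} for $d_\mu L'(\nu)$, transplanted to the image domains $\Omega^\pm=F^\mu(\mathbb H^\pm)$, where the conformal factor you are trying to divide out is already built into the invariant form of the formula and the hyperbolic densities $\rho_\pm$ (via $\rho_\pm\geq 1/(2d(\cdot,\partial\Omega^\pm))$) absorb exactly the geometric quantities that obstruct your flat-weight Schur bound. This yields $\Vert d_\mu L(\nu)\Vert_{\mathcal B_p}\leq C_p\Vert\nu\Vert_p$ uniformly in the basepoint $\mu$, and the lemma then follows by integrating $t\mapsto L(t\mu)$ over $t\in[0,1]$ with Minkowski's integral inequality. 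Your parenthetical alternative via $(L(\mu))''=S(\mu)+\tfrac12((L(\mu))')^2$ cannot substitute for this either: it requires a bound on $\Vert L(\mu)\Vert_{\mathcal B_{2p}}$, which is the very content of the lemma (indeed this identity is how the paper later extends the result to $1\leq p\leq 2$, using the present lemma at the exponent $2p>2$), and in any case "local boundedness near $\mu=0$" is weaker than the claimed inequality $\Vert L(\mu)\Vert_{\mathcal B_p}\leq C_p\Vert\mu\Vert_p$ on all of $M_p(\mathbb H^+)$. So as written the proposal has a genuine gap at its final step; to close it you would either have to redo the estimate in the image-domain/variational form (i.e.\ the paper's argument) or supply a new uniform control of $1/g'$ and $\partial g$ that you have not provided.
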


\begin{proof}
We first represent the directional derivative $d_\mu L(\nu)$ of $L$ at $\mu \in M_p(\mathbb H^+)$ in the direction of 
a tangent vector $\nu$. 
Let $\Omega^+=F(\mathbb H^+)$ and $\Omega^-=F(\mathbb H^-)$ for the quasiconformal extension $F$ of $F^\mu$
to $\mathbb C$, and let $\rho_+$ and $\rho_-$ denote
their hyperbolic densities. For the normalized Riemann mapping $G:\mathbb H^+ \to \Omega^+$
associated with $F$,
the push-forward of the Beltrami coefficient $\nu$ on $\mathbb H^+$ by $G$
is defined by
$$
G_*(\nu)(w)=\nu(G^{-1}(w))\frac{(G^{-1})_{\bar w}}{(G^{-1})_{w}} \quad (w \in \Omega^+).
$$
As in the case of the Schwarzian derivative map (see \cite[Lemma 5]{Har} and \cite[Theorem I.2.3]{TT}), we see that
\begin{equation}\label{derivative}
d_\mu L'(\nu)(F^{-1}(\zeta))(F^{-1})'(\zeta)=-\frac{2}{\pi} \int_{\Omega^+} \frac{G_*(\nu)(w)}{(w-\zeta)^3}dudv 
\quad (\zeta \in \Omega^-).
\end{equation}
Here, $d_\mu L'(\nu)$ stands for the derivative of the holomorphic function $d_\mu L(\nu)$ in ${\mathcal B}_p(\mathbb H^-)$.

We estimate the norm of $d_\mu L(\nu)$:
\begin{align}\label{d-norm}
\Vert d_\mu L(\nu) \Vert_{{\mathcal B}_p}^p
&=\int_{\mathbb H^-} |({\rm Im}\,z)\,d_\mu L'(\nu)(z)|^p\frac{dxdy}{|{\rm Im}\,z|^2}\\
&=\int_{\Omega^-} |d_\mu L'(\nu)(F^{-1}(\zeta))(F^{-1})'(\zeta)|^p \rho_-^{2-p}(\zeta) d\xi d\eta \\
&=\Bigl(\frac{2}{\pi}\Bigr)^p\int_{\Omega^-} \Bigl|\int_{\Omega^+} \frac{G_*(\nu)(w)}{(w-\zeta)^3}dudv \Bigr|^p 
\rho_-^{2-p}(\zeta) d\xi d\eta.
\end{align}
Then, applying the H\"older inequality to the absolute value of the inner integral, we obtain
\begin{align}\label{hoelder}
\Bigl|\int_{\Omega^+} \frac{G_*(\nu)(w)}{(w-\zeta)^3}dudv \Bigr|^p
\leq \Bigl(\int_{\Omega^+} \frac{1}{|w-\zeta|^{4-q}dudv}\Bigr)^{p/q}\Bigl(\int_{\Omega^+} \frac{|G_*(\nu)(w)|^p}{|w-\zeta|^4}dudv\Bigr)
\end{align}
for $1/p+1/q=1$.
Here, we note the following inequalities for the hyperbolic densities (see \cite[p.6]{Le}):
$$
\rho_-(\zeta) \geq \frac{1}{2d(\zeta, \partial \Omega^-)};\quad \rho_+(w) \geq \frac{1}{2d(w, \partial \Omega^+)}.
$$
Then, by virtue of the condition $q<2$, the first integral is bounded as follows:
\begin{align}\label{q<2}
\int_{\Omega^+} \frac{1}{|w-\zeta|^{4-q}}dudv 
&\leq \int_{|w-\zeta| \geq d(\zeta,\partial \Omega^-)}\frac{1}{|w-\zeta|^{4-q}}dudv \nonumber\\
&=\int_0^{2\pi}\!\! \int_{d(\zeta,\partial \Omega^-)}^\infty\frac{1}{r^{3-q}}drd\theta \nonumber\\
&=\frac{2\pi}{2-q}\frac{1}{d(\zeta,\partial \Omega^-)^{2-q}} \leq \frac{8\pi}{2-q}\rho_-^{2-q}(\zeta).
\end{align}
In the same way, we also have
\begin{align}\label{q=0}
\int_{\Omega^-} \frac{1}{|w-\zeta|^{4}}d\xi d\eta \leq 4\pi \rho_+^{2}(w).
\end{align}
The substitution of the above inequalities \eqref{hoelder}, \eqref{q<2}, and \eqref{q=0} into \eqref{d-norm} yields
\begin{align}\label{d-estimate}
\Vert d_\mu L(\nu) \Vert_{{\mathcal B}_p}^p
&\leq \Bigl(\frac{2}{\pi}\Bigr)^p \Bigl(\frac{8\pi}{2-q}\Bigr)^{p/q}\int_{\Omega^-} 
\int_{\Omega^+} \Bigl(\frac{|G_*(\nu)(w)|^p}{|w-\zeta|^4}dudv \Bigr) (\rho_-^{2-q}(\zeta))^{p/q} 
\rho_-^{2-p}(\zeta) d\xi d\eta \nonumber\\
&\leq \Bigl(\frac{16}{2-q}\Bigr)^p \int_{\Omega^+} \Bigl(\int_{\Omega^-} \frac{1}{|w-\zeta|^4}d\xi d\eta \Bigr)
|G_*(\nu)(w)|^p dudv \nonumber\\
&\leq \Bigl(\frac{16}{2-q}\Bigr)^p \int_{\Omega^+} 4\pi \rho_+^2(w)|G_*(\nu)(w)|^p dudv \nonumber\\
&=4\pi \Bigl(\frac{16}{2-q}\Bigr)^p \int_{\mathbb H^+} |\nu(z)|^p \frac{dxdy}{|{\rm Im}\,z|^2}=4\pi \Bigl(\frac{16}{2-q}\Bigr)^p \Vert \nu \Vert_p^p.
\end{align}

For $\mu \in M_p(\mathbb H^+)$, let $L_\mu(t)=L(t\mu)$ for $t \in [0,1]$. 
By the fundamental theorem of calculus, we have
$$
L(\mu)=L_\mu(1)-L_\mu(0)=\int_0^1 \frac{dL_\mu}{dt}(t) dt,
$$
where $\frac{dL_\mu}{dt}(t)=d_{t\mu}L(\mu)$. Inequality \eqref{d-estimate} proved above shows that
$$
\Vert d_{t\mu}L(\mu) \Vert_{{\mathcal B}_p}^p \leq C_p^p \Vert \mu \Vert_p^p
$$
for all $t \in [0,1]$, where $C_p>0$ is the constant depending only on $p$. Hence,
\begin{align}
\Vert L(\mu) \Vert_{{\mathcal B}_p}^p&=\int_{\mathbb H^-}
\Bigl|\Bigl(\int_0^1 \frac{dL_\mu}{dt}(t) dt\Bigr)'(z)\Bigr|^p
|{\rm Im}\,z|^{p-2} dxdy\\
&\leq \int_{\mathbb H^-}
\Bigl(\int_0^1 |d_{t\mu}L'(\mu)(z)| dt\Bigr)^p
|{\rm Im}\,z|^{p-2} dxdy\\
&\leq 
\int_0^1 \Bigl(\int_{\mathbb H^-} |d_{t\mu}L'(\mu)(z)|^p |{\rm Im}\,z|^{p-2} dxdy \Bigr) dt
=\Vert d_{t\mu}L(\mu) \Vert_{{\mathcal B}_p}^p,
\end{align}
which is bounded also by $C_p^p \Vert \mu \Vert_p^p$.
\end{proof}

\begin{remark}\label{S-case}
In the case of the Schwarzian derivative map $S:M_p(\mathbb H^+) \to {\mathcal A}_p(\mathbb H^-)$, 
a similar argument can be applied. This has been done in Theorem 2.3 and Lemma 2.9 of \cite[Chapter I]{TT}. 
The corresponding formula to \eqref{derivative} is
$$
d_\mu S(\nu)(F^{-1}(\zeta))(F^{-1})'(\zeta)^2=-\frac{6}{\pi} \int_{\Omega^+} \frac{G_*(\nu)(w)}{(w-\zeta)^4}dudv 
\quad (\zeta \in \Omega^-),
$$
and \eqref{hoelder} with the density $\rho_-^{2-2p}(\zeta)$ turns out to be
\begin{align}
\Bigl|\int_{\Omega^+} \frac{G_*(\nu)(w)}{(w-\zeta)^4}dudv \Bigr|^p\rho_-^{2-2p}(\zeta)
&\leq \Bigl(\int_{\Omega^+} \frac{1}{|w-\zeta|^{4}dudv}\Bigr)^{p/q}\Bigl(\int_{\Omega^+} \frac{|G_*(\nu)(w)|^p}{|w-\zeta|^4}dudv\Bigr)\rho_-^{2-2p}(\zeta)\\
& \leq (4\pi)^{p/q}\int_{\Omega^+} \frac{|G_*(\nu)(w)|^p}{|w-\zeta|^4}dudv
\end{align}
by using \eqref{q=0}. This holds without any condition on $p \geq 1$. In the case $p=1$,
the usual modification is applied for $q=\infty$. The other parts of the proof are the same.
This gives the first statement of Proposition \ref{S-holo}.
\end{remark}

We remove the condition $p>2$ in the statement of Lemma \ref{P-bdd} and show the required result
in full generality with the aid of properties of the Schwarzian derivative map $S$.

\begin{theorem}\label{P-holo}
For $p \geq 1$, 
the pre-Schwarzian derivative map $L$
satisfies $\Vert L(\mu) \Vert_{{\mathcal B}^{\#}_p} \leq C_p^{\#} \Vert \mu \Vert_p$ for
every $\mu \in M_p(\mathbb H^+)$, where $C_p^{\#}>0$ is a constant depending on $p$ and $\Vert \mu \Vert_p$.
Moreover,
$L:M_p(\mathbb H^+) \to \widehat{\mathcal B}_p(\mathbb H^-)$
is holomorphic.
\end{theorem}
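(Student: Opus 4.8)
The plan is to play the two derivative maps against each other via the identity $S_F=(N_F)'-\frac{1}{2}(N_F)^2$. Setting $N:=L(\mu)'=N_{F^\mu}$ on $\mathbb H^-$, this says $L(\mu)''=S(\mu)+\frac{1}{2}N^2$. Writing $d\nu(z)=dxdy/|{\rm Im}\,z|^2$ and recalling that $\Vert\Phi\Vert_{{\mathcal A}_p}=\Vert({\rm Im}\,z)^2\Phi\Vert_{L^p(\nu)}$, $\Vert\Phi\Vert_{{\mathcal B}_p^{\#}}=\Vert({\rm Im}\,z)^2\Phi''\Vert_{L^p(\nu)}$ and $\Vert\Phi\Vert_{{\mathcal B}_r}=\Vert({\rm Im}\,z)\Phi'\Vert_{L^r(\nu)}$, Minkowski's inequality (valid since $p\geq 1$) gives
\[ \Vert L(\mu)\Vert_{{\mathcal B}_p^{\#}}\leq \Vert S(\mu)\Vert_{{\mathcal A}_p}+\frac{1}{2}\Vert(({\rm Im}\,z)N)^2\Vert_{L^p(\nu)}=\Vert S(\mu)\Vert_{{\mathcal A}_p}+\frac{1}{2}\Vert L(\mu)\Vert_{{\mathcal B}_{2p}}^2 . \]
By Proposition \ref{S-holo} the first term is at most $\widetilde C_p\Vert\mu\Vert_p$, so the whole matter reduces to bounding the ordinary Besov norm $\Vert L(\mu)\Vert_{{\mathcal B}_{2p}}$, at the exponent $2p\geq 2$, in terms of $\Vert\mu\Vert_p$.

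Here I would use the elementary remark that, since $\Vert\mu\Vert_\infty<1$, one has $\Vert\mu\Vert_r^r=\int_{\mathbb H^+}|\mu|^r\,d\nu\leq\int_{\mathbb H^+}|\mu|^p\,d\nu=\Vert\mu\Vert_p^p$ for every $r\geq p$; thus $M_p(\mathbb H^+)\subset M_r(\mathbb H^+)$ with $\Vert\mu\Vert_r\leq\Vert\mu\Vert_p^{p/r}$. When $p>1$ we have $2p>2$, so Lemma \ref{P-bdd} applies \emph{at the exponent $2p$} and yields $\Vert L(\mu)\Vert_{{\mathcal B}_{2p}}\leq C_{2p}\Vert\mu\Vert_{2p}\leq C_{2p}\Vert\mu\Vert_p^{1/2}$; substituting this back gives $\Vert L(\mu)\Vert_{{\mathcal B}_p^{\#}}\leq C_p^{\#}\Vert\mu\Vert_p$ directly. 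The one genuinely delicate point---the main obstacle---is the endpoint $p=1$, where the required exponent $2p=2$ lies just outside the range $p>2$ of Lemma \ref{P-bdd}.

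To handle $p=1$ I would apply the same reduction once more, one level down. Since $2>1$, the norm $\Vert L(\mu)\Vert_{{\mathcal B}_2}$ is equivalent to $\Vert L(\mu)\Vert_{{\mathcal B}_2^{\#}}+|L(\mu)'(-i)|$; its ${\mathcal B}_2^{\#}$-part is controlled exactly as above by $\Vert L(\mu)\Vert_{{\mathcal B}_2^{\#}}\leq\widetilde C_2\Vert\mu\Vert_2+\frac{1}{2}\Vert L(\mu)\Vert_{{\mathcal B}_4}^2$, where now $4>2$, so Lemma \ref{P-bdd} bounds $\Vert L(\mu)\Vert_{{\mathcal B}_4}$ by $C_4\Vert\mu\Vert_4\leq C_4\Vert\mu\Vert_1^{1/4}$. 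The remaining term $|L(\mu)'(-i)|=|N_{F^\mu}(-i)|$ is bounded by a universal constant through the Koebe distortion estimate for the univalent map $F^\mu$ on $\mathbb H^-$ (equivalently, by $\Vert L(\mu)\Vert_{{\mathcal B}_q}\lesssim\Vert\mu\Vert_q$ for a fixed $q>2$). Hence $\Vert L(\mu)\Vert_{{\mathcal B}_2}$ is finite and controlled by $\Vert\mu\Vert_1$, so $\Vert L(\mu)\Vert_{{\mathcal B}_1^{\#}}=\Vert S(\mu)\Vert_{{\mathcal A}_1}+\frac{1}{2}\Vert L(\mu)\Vert_{{\mathcal B}_2}^2\leq C_1^{\#}\Vert\mu\Vert_1$ with $C_1^{\#}$ depending on $p=1$ and $\Vert\mu\Vert_1$. (Alternatively, one may simply invoke the classical estimate $\Vert L(\mu)\Vert_{{\mathcal B}_2(\mathbb H^-)}\lesssim\Vert\mu\Vert_2$ for $\mu\in M_2(\mathbb H^+)$ going back to Cui \cite{Cu} and Takhtajan--Teo \cite{TT}.)

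It remains to pass from this ${\mathcal B}_p^{\#}$-estimate to the statement about $\widehat{\mathcal B}_p={\mathcal B}_p^{\#}\cap{\rm BMOA}$ and to prove holomorphy. For the BMOA part, fixing any $q>\max\{2,p\}$, Proposition \ref{norm}(iii) together with Lemma \ref{P-bdd} give $\Vert L(\mu)\Vert_{\rm BMOA}\lesssim\Vert L(\mu)\Vert_{{\mathcal B}_q}\leq C_q\Vert\mu\Vert_q\leq C_q\Vert\mu\Vert_p^{p/q}$, so $L(\mu)\in\widehat{\mathcal B}_p(\mathbb H^-)$ and $L\colon M_p(\mathbb H^+)\to\widehat{\mathcal B}_p(\mathbb H^-)$ is locally bounded. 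Holomorphy then follows exactly as for $S$ in Proposition \ref{S-holo}: the point evaluations $\Phi\mapsto\Phi''(z_0)$, $z_0\in\mathbb H^-$, are bounded linear functionals on $\widehat{\mathcal B}_p(\mathbb H^-)$ (by the integral-mean estimate underlying Proposition \ref{norm2}) which separate points, while $\mu\mapsto L(\mu)''(z_0)$ is holomorphic since $F^\mu$ and its derivatives depend holomorphically on $\mu$ and $(F^\mu)'\neq0$; a locally bounded map between complex Banach spaces whose compositions with a separating family of bounded linear functionals are all holomorphic is itself holomorphic. Everything beyond the endpoint discussion above is routine bookkeeping of the inequalities.
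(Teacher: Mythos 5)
Your proposal follows essentially the same route as the paper: the identity $S_F=(N_F)'-\tfrac12(N_F)^2$ reduces $\Vert L(\mu)\Vert_{\mathcal B_p^{\#}}$ to $\Vert S(\mu)\Vert_{\mathcal A_p}$ (Proposition \ref{S-holo}) plus $\tfrac12\Vert L(\mu)\Vert_{\mathcal B_{2p}}^2$ (Lemma \ref{P-bdd} at the exponent $2p>2$), the endpoint $p=1$ is handled by bootstrapping through the exponent $2$ case exactly as in the paper, and holomorphy follows from local boundedness plus the standard weak-holomorphy argument, so the proof is correct in the same sense as the paper's. The one place you diverge is the point-evaluation term in $\Vert L(\mu)\Vert_{\mathcal B_2}\asymp\Vert L(\mu)\Vert_{\mathcal B_2^{\#}}+|L(\mu)'(-i)|$, which the paper silently absorbs into its claim $\Vert L(\mu)\Vert_{\mathcal B_2}\leq C_2'\Vert\mu\Vert_2$: bounding $|L(\mu)'(-i)|$ by a universal Koebe constant (or by $\Vert\mu\Vert_1^{1/q}$) still gives a bound depending only on $\Vert\mu\Vert_1$ and hence the stated conclusion, but the resulting $C_1^{\#}$ blows up as $\Vert\mu\Vert_1\to0$, whereas the paper's bound $\widetilde C_1\Vert\mu\Vert_1+(C_2'\Vert\mu\Vert_1)^2$ remains linear near the origin (which is what the subsequent corollary on $d_{[0]}L$ actually uses); your parenthetical appeal to the classical Cui--Takhtajan--Teo $\mathcal B_2$ estimate is the way to recover that rate.
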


\begin{proof}
For any $\mu \in M_p(\mathbb H^+)$, let $F=F^\mu$.
Then, using $S_F=(N_F)'-\tfrac12(N_F)^2$, we have
\begin{align}\label{formula7}
\Vert L(\mu) \Vert_{\mathcal B_p^{\#}}^p
&=\int_{\mathbb H^-}|({\rm Im}\,z)^{2}(N_{F})'(z)|^p\frac{dxdy}{|{\rm Im}\,z|^2} \nonumber \\
& \leq 2^{p-1}\int_{\mathbb H^-}|({\rm Im}\,z)^{2}S_{F}(z)|^p \frac{dxdy}{|{\rm Im}\,z|^2}
+\frac12\int_{\mathbb H^-}|({\rm Im}\,z) N_{F}(z)|^{2p}\frac{dxdy}{|{\rm Im}\,z|^2} \nonumber \\
& \leq
2^{p-1}\Vert S(\mu) \Vert_{\mathcal A_p}^p+
\tfrac12\Vert L(\mu) \Vert_{\mathcal B_{2p}}^{2p}.
\end{align}

We first assume $p>1$. By Proposition \ref{S-holo}, Lemma \ref{P-bdd}, and $\Vert \mu \Vert_{2p} \leq \Vert \mu \Vert_{p}$,
inequality \eqref{formula7} implies that
\begin{align}
\Vert L(\mu) \Vert_{{\mathcal B}_p^{\#}}^p &\leq 2^{p-1}(\widetilde C_p \Vert \mu \Vert_p)^p+\tfrac12(C_{2p}\Vert \mu \Vert_{2p})^{2p}\\
&\leq 2^{p-1}({\widetilde C_{p}}^{\ p}+C_{2p}^{\ 2p}\Vert \mu \Vert_{p}^p) \Vert \mu \Vert_{p}^p.
\end{align}
This yields $\Vert L(\mu) \Vert_{{\mathcal B}_p^{\#}} \leq C_p^{\#}\Vert \mu \Vert_{p}$ for $p>1$, where
$C_p^{\#}>0$ is a constant depending also on $\Vert \mu \Vert_{p}$.

In the case $p=1$, we apply \eqref{formula7} again to have
$$
\Vert L(\mu) \Vert_{\mathcal B_1^{\#}} \leq
\Vert S(\mu) \Vert_{\mathcal A_1}+
\Vert L(\mu) \Vert_{\mathcal B_{2}}^2.
$$ 
By using $\Vert \mu \Vert_2 \leq \Vert \mu \Vert_1$,
this implies that
$$
\Vert L(\mu) \Vert_{\mathcal B_1^{\#}} \leq \widetilde C_1 \Vert \mu \Vert_{1}+(C_2\Vert \mu \Vert_{1})^2.
$$
Hence, we can also find $C_1^{\#}>0$ depending on $\Vert \mu \Vert_{1}$ such that 
$\Vert L(\mu) \Vert_{\mathcal B_1^{\#}} \leq C_1^{\#} \Vert \mu \Vert_1$.
This completes the proof of the first statement of the theorem.

For the second statement, we note that
$L:M(\mathbb H^+) \to {\mathcal B}_\infty(\mathbb H^-)$ satisfies
$\Vert L(\mu) \Vert_{\mathcal B_\infty} \leq 3 \Vert \mu \Vert_\infty$
(see \cite[Proposition 5.3]{GH}).
Then, combined with the first statement and Remark \ref{normsupply}, this yields that
$$
\Vert L(\mu) \Vert_{\widehat{\mathcal B}_p} \asymp \Vert L(\mu) \Vert_{{\mathcal B}_p^{\#}}
+\Vert L(\mu) \Vert_{{\mathcal B}_\infty}
\leq \max\,\{C_p^{\#},3\}(\Vert \mu \Vert_p+\Vert \mu \Vert_\infty)
$$
for every $p \geq 1$.
Hence, $L:M_p(\mathbb H^+) \to \widehat{\mathcal B}_p(\mathbb H^-)$ is in particular locally bounded.
Under this condition, the standard argument implies that $L$ is in fact holomorphic
in virtue of Proposition \ref{G-holomorphic}.
\end{proof}

\begin{remark}
The continuity of $L:M_p(\mathbb H^+) \to \widehat{\mathcal B}_p(\mathbb H^-)$
can be proved directly as in \cite[Theorem 2.4]{Sh} and \cite[Theorem 2.4]{TS}, from which holomorphy also follows. Indeed,
for any $\mu, \nu \in M_p(\mathbb H)$, the same argument as above gives
\begin{equation}
\Vert L(\mu)-L(\nu) \Vert_{\mathcal B_p^{\#}}^p
\leq
2^{p-1}\{\Vert S(\mu)-S(\nu) \Vert_{\mathcal A_p}^p+
(\Vert L(\mu) \Vert_{\mathcal B_{2p}}^p+\Vert L(\nu) \Vert_{\mathcal B_{2p}}^p)\Vert L(\mu)-L(\nu) \Vert_{\mathcal B_{2p}}^p\}.
\end{equation}
\end{remark}

\begin{remark}
Theorem \ref{P-holo} improves the statement of \cite[Theorem 6.10]{WM-4} by replacing the assumption $p>2$
with $p \geq 1$.
\end{remark}

\begin{corollary}
For $p \geq 1$, 
the derivative of the pre-Schwarzian derivative map $L$ at the origin
satisfies $\Vert d_{0}L(\mu) \Vert_{{\mathcal B}^{\#}_p} \leq C_p^{\#} \Vert \mu \Vert_p$ for
every $\mu \in M_p(\mathbb H^+)$.
\end{corollary}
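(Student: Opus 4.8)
The statement will follow at once from Theorem \ref{P-holo} together with the fact that $L(0)=0$ (because $F^0$ is the identity and $\log 1 = 0$). Here is the plan.

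First I would record that $L:M_p(\mathbb H^+)\to\widehat{\mathcal B}_p(\mathbb H^-)$ is holomorphic by Theorem \ref{P-holo}, hence Fr\'echet differentiable at the origin $[0]$, so $d_{[0]}L$ is a bounded complex-linear map into $\widehat{\mathcal B}_p(\mathbb H^-)$. Since $L(0)=0$, for each fixed $\mu\in M_p(\mathbb H^+)$ one has $L(t\mu)/t\to d_{[0]}L(\mu)$ in $\widehat{\mathcal B}_p(\mathbb H^-)$ as $t\to 0$ along $(0,1]$; as the seminorm $\Vert\cdot\Vert_{{\mathcal B}_p^{\#}}$ is dominated by $\Vert\cdot\Vert_{\widehat{\mathcal B}_p}$, it is continuous on $\widehat{\mathcal B}_p(\mathbb H^-)$, whence $\Vert d_{[0]}L(\mu)\Vert_{{\mathcal B}_p^{\#}}=\lim_{t\to 0}\Vert L(t\mu)\Vert_{{\mathcal B}_p^{\#}}/t$.

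Next I would apply the first assertion of Theorem \ref{P-holo} to $t\mu\in M_p(\mathbb H^+)$ for $0<t\le 1$, which gives $\Vert L(t\mu)\Vert_{{\mathcal B}_p^{\#}}\le C_p^{\#}\,t\Vert\mu\Vert_p$. One has to observe here that the constant furnished by Theorem \ref{P-holo} depends on the norm only through $\Vert t\mu\Vert_p=t\Vert\mu\Vert_p$, and is non-decreasing in that quantity (this is visible from its explicit form in the proof of Theorem \ref{P-holo}), so the very same $C_p^{\#}$ that works for $\mu$ works for all $t\mu$ with $t\le 1$. Dividing by $t$ and letting $t\to 0$ yields $\Vert d_{[0]}L(\mu)\Vert_{{\mathcal B}_p^{\#}}\le C_p^{\#}\Vert\mu\Vert_p$.

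The only mildly delicate point is this bookkeeping of the constant's dependence on $\Vert\mu\Vert_p$, which is where I expect any friction; it can be avoided altogether, and I would also present the following cleaner variant. Since $L(0)=0$, linearizing $S(\mu)=(N_{F^\mu})'-\frac{1}{2}(N_{F^\mu})^2=\Psi''-\frac{1}{2}(\Psi')^2$ with $\Psi=L(\mu)$ at $\Psi=0$ yields the identity $d_{[0]}S(\mu)=(d_{[0]}L(\mu))''$. By the very definitions of the seminorms, $\Vert d_{[0]}L(\mu)\Vert_{{\mathcal B}_p^{\#}}=\Vert(d_{[0]}L(\mu))''\Vert_{{\mathcal A}_p}=\Vert d_{[0]}S(\mu)\Vert_{{\mathcal A}_p}$, and applying Proposition \ref{S-holo} along $t\mu$ exactly as above — now with the constant $\widetilde C_p$, which is genuinely independent of $\Vert\mu\Vert_p$ — gives $\Vert d_{[0]}L(\mu)\Vert_{{\mathcal B}_p^{\#}}\le\widetilde C_p\Vert\mu\Vert_p\le C_p^{\#}\Vert\mu\Vert_p$.
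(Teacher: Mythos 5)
Your proposal is correct and follows the route the paper intends: the corollary is stated without a separate proof as an immediate consequence of Theorem \ref{P-holo}, obtained exactly as in your first paragraph by applying the bound of that theorem to $t\mu$, using $L(0)=0$ and the continuity of the seminorm, and letting $t\to 0$ (the monotonicity of $C_p^{\#}$ in $\Vert\mu\Vert_p$ is indeed visible from the explicit constants in the theorem's proof). Your second variant, via $d_{[0]}S(\mu)=(d_{[0]}L(\mu))''$ and Proposition \ref{S-holo}, is a nice sharpening since it yields a constant depending only on $p$.
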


Next, we link $S$ and $L$ by the canonical holomorphic map 
$J:{\mathcal B}_\infty(\mathbb H) \to \mathcal A_\infty(\mathbb H)$ 
defined by $\Phi \mapsto \Phi''-(\Phi')^2/2$ for $\Phi \in {\mathcal B}_\infty(\mathbb H)$.

\begin{lemma}\label{BtoA}
For each $p \geq 1$, every 
$\Phi \in \widehat{\mathcal B}_p(\mathbb H)$ satisfies
$\Vert J(\Phi) \Vert_{{\mathcal A}_p} \leq c_p\Vert \Phi \Vert_{\widehat{\mathcal B}_p}$,
where $c_p>0$ is a constant depending on $p$ and $\Vert \Phi \Vert_{\widehat{\mathcal B}_p}$.
Moreover, $J$ is holomorphic on $\widehat{\mathcal B}_p(\mathbb H)$ with respect to
$\Vert \cdot \Vert_{\widehat{\mathcal B}_p}$.

\end{lemma}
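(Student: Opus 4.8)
The plan is to view $J(\Phi)=\Phi''-\tfrac12(\Phi')^2$ as a continuous polynomial map of degree two and to estimate its linear and quadratic parts separately against the two semi-norms making up $\Vert\cdot\Vert_{\widehat{\mathcal B}_p}=\Vert\cdot\Vert_{{\mathcal B}_p^{\#}}+\Vert\cdot\Vert_{\rm BMOA}$. The linear part is handled by the definitions alone: $\Vert\Phi''\Vert_{{\mathcal A}_p}=\Vert\Phi\Vert_{{\mathcal B}_p^{\#}}\le\Vert\Phi\Vert_{\widehat{\mathcal B}_p}$, so $\Phi\mapsto\Phi''$ is a bounded linear map $\widehat{\mathcal B}_p(\mathbb H)\to{\mathcal A}_p(\mathbb H)$. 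For the quadratic part, a direct computation from the definition of the norms, followed by the Cauchy--Schwarz inequality applied to the measure $dxdy/|{\rm Im}\,z|^2$, gives for holomorphic functions $\Phi,\Psi$ on $\mathbb H$ the bilinear estimate
$$
\Vert\Phi'\Psi'\Vert_{{\mathcal A}_p}\le\Vert\Phi\Vert_{{\mathcal B}_{2p}}\,\Vert\Psi\Vert_{{\mathcal B}_{2p}}.
$$
Thus, modulo these routine manipulations, the required norm bound reduces to the single estimate $\Vert\Phi\Vert_{{\mathcal B}_{2p}}\lesssim\Vert\Phi\Vert_{\widehat{\mathcal B}_p}$ for $\Phi\in\widehat{\mathcal B}_p(\mathbb H)$.

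For $p>1$ this is immediate: since $1<p\le 2p$, Proposition \ref{norm}(i) gives $\Vert\Phi\Vert_{{\mathcal B}_{2p}}\lesssim\Vert\Phi\Vert_{{\mathcal B}_p}\asymp\Vert\Phi\Vert_{\widehat{\mathcal B}_p}$. The main obstacle is the case $p=1$, where ${\mathcal B}_{2p}={\mathcal B}_2$ is no longer dominated by ${\mathcal B}_1$ --- the ${\mathcal B}_1$-semi-norm being finite only for constants --- so this reasoning collapses and a different argument is needed. Here I would run the proof in parallel with that of Theorem \ref{isomorphism}, using the inclusion $\widehat{\mathcal B}_1(\mathbb H)\subset\dot{\mathcal H}_1^1(\mathbb H)$ with $\Vert\Phi\Vert_{\dot{\mathcal H}_1^1}\lesssim\Vert\Phi\Vert_{\widehat{\mathcal B}_1}$ supplied by Proposition \ref{H1} and Remark \ref{normHD}. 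The crucial point is that for $\Phi\in\dot{\mathcal H}_1^1(\mathbb H)$ the measure $d\lambda=|\Phi'(z)|\,dxdy$ is a Carleson measure on $\mathbb H$ whose Carleson constant does not exceed $\Vert\Phi\Vert_{\dot{\mathcal H}_1^1}$: indeed, over the Carleson square $Q_I$ associated with an interval $I\subset\mathbb R$,
$$
\lambda(Q_I)\le\int_0^{|I|}\Big(\int_{\mathbb R}|\Phi'(x+iy)|\,dx\Big)\,dy\le|I|\,\Vert\Phi\Vert_{\dot{\mathcal H}_1^1}.
$$
Since $\Phi\in\dot{\mathcal H}_1^1(\mathbb H)$, the Carleson embedding theorem (\cite[Theorem 9.3]{Du}, \cite[Theorem II.3.9]{Ga}) then yields
$$
\Vert\Phi\Vert_{{\mathcal B}_2}^2=\int_{\mathbb H}|\Phi'(z)|^2\,dxdy=\int_{\mathbb H}|\Phi'(z)|\,d\lambda(z)\ \lesssim\ \Vert\lambda\Vert_{\rm Carleson}\,\Vert\Phi\Vert_{\dot{\mathcal H}_1^1}\ \lesssim\ \Vert\Phi\Vert_{\widehat{\mathcal B}_1}^2,
$$
which is precisely $\Vert\Phi\Vert_{{\mathcal B}_2}\lesssim\Vert\Phi\Vert_{\widehat{\mathcal B}_1}$. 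Putting the two parts together gives, for every $p\ge1$, $\Vert J(\Phi)\Vert_{{\mathcal A}_p}\le\Vert\Phi\Vert_{\widehat{\mathcal B}_p}+C\Vert\Phi\Vert_{\widehat{\mathcal B}_p}^2$, which is the asserted inequality with the same convention as in Theorem \ref{P-holo}, namely that $C_p$ may depend on $\Vert\Phi\Vert_{\widehat{\mathcal B}_p}$.

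Finally, the estimates above make the holomorphy of $J$ automatic: they exhibit $J=T-\tfrac12 B$, where $T(\Phi)=\Phi''$ is a bounded linear map and $B(\Phi)=\Phi'\Phi'$ is a bounded quadratic map $\widehat{\mathcal B}_p(\mathbb H)\to{\mathcal A}_p(\mathbb H)$, the latter arising from the bounded symmetric bilinear map $(\Phi,\Psi)\mapsto\Phi'\Psi'$; a continuous polynomial map of degree two between complex Banach spaces is holomorphic (cf. \cite[Lemma V.5.1]{Le}). I expect the only genuinely new ingredient beyond the $p>1$ theory to be the Carleson-measure estimate in the case $p=1$, and that is the step I would write out most carefully, being attentive to the uniformity in $\Phi$ of the Carleson constant of $d\lambda$ and to the correct formulation of the half-plane Carleson embedding theorem. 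The same argument works verbatim on the unit disk, with Carleson squares replaced by the corresponding Carleson sectors, so that the statement also holds for $\widehat{\mathcal B}_p(\mathbb D)$, as will be needed in Section 4.
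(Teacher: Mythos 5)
Your proposal is correct, and its skeleton is the same as the paper's: split $J(\Phi)=\Phi''-\tfrac12(\Phi')^2$, bound $\Vert\Phi''\Vert_{\mathcal A_p}$ by $\Vert\Phi\Vert_{\mathcal B_p^{\#}}$, bound the square term by $\Vert\Phi\Vert_{\mathcal B_{2p}}^2$ (the paper does this via the pointwise inequality $|a-\tfrac12 b|^p\le 2^{p-1}|a|^p+\tfrac12|b|^{2p}$ rather than your bilinear Cauchy--Schwarz estimate, but that is the same reduction), and then control $\Vert\Phi\Vert_{\mathcal B_{2p}}$ by $\Vert\Phi\Vert_{\widehat{\mathcal B}_p}$. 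Where you diverge is the case $p=1$: you treat it as a genuine obstacle and resolve it with a Hardy-space/Carleson-measure argument ($\widehat{\mathcal B}_1\subset\dot{\mathcal H}_1^1$, $|\Phi'|\,dxdy$ a Carleson measure with constant $\le\Vert\Phi\Vert_{\dot{\mathcal H}_1^1}$, then the Carleson embedding theorem giving $\Vert\Phi\Vert_{\mathcal B_2}^2\lesssim\Vert\Phi\Vert_{\dot{\mathcal H}_1^1}^2$). That argument is valid, but it is unnecessary: since $2p\ge 2>1$, the paper simply writes $\Vert\Phi\Vert_{\mathcal B_{2p}}\asymp\Vert\Phi\Vert_{\widehat{\mathcal B}_{2p}}\lesssim\Vert\Phi\Vert_{\widehat{\mathcal B}_p}$, the first equivalence being the stated equivalence of $\Vert\cdot\Vert_{\mathcal B_q}$ and $\Vert\cdot\Vert_{\widehat{\mathcal B}_q}$ for $q>1$ and the second being Proposition \ref{norm2} plus the common BMOA term; this works uniformly for all $p\ge1$, including $p=1$, with no Carleson machinery (which the paper reserves for Theorem \ref{isomorphism} and Proposition \ref{H1}). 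Your route buys nothing extra here beyond re-deriving $\Vert\Phi\Vert_{\mathcal B_2}\lesssim\Vert\Phi\Vert_{\widehat{\mathcal B}_1}$ by heavier means, though it is a correct alternative and does make explicit the inclusion $\widehat{\mathcal B}_1\subset\dot{\mathcal H}_1^1$. For holomorphy, you exhibit $J$ as a bounded linear map minus a bounded quadratic map induced by a bounded symmetric bilinear form, hence a continuous polynomial and so entire; the paper instead invokes local boundedness plus G\^ateaux holomorphy (citing Chae). Your version is slightly more explicit and equally standard. Finally, your remark that $C_p$ may depend on $\Vert\Phi\Vert_{\widehat{\mathcal B}_p}$ matches the paper's implicit convention, since its own bound is $\Vert J(\Phi)\Vert_{\mathcal A_p}^p\le 2^{p-1}\Vert\Phi\Vert_{\mathcal B_p^{\#}}^p+\tfrac12\Vert\Phi\Vert_{\mathcal B_{2p}}^{2p}$, which is likewise quadratic in the norm.
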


\begin{proof}
We have
\begin{align}
\Vert J(\Phi) \Vert_{{\mathcal A}_p}^p
&=\int_{\mathbb H} |({\rm Im}\,z)^{2}(\Phi''(z)-\tfrac12\Phi'(z)^2)|^p\frac{dxdy}{|{\rm Im}\,z|^2}\\
&\leq 2^{p-1} \int_{\mathbb H} |({\rm Im}\,z)^{2}\Phi''(z)|^p\frac{dxdy}{|{\rm Im}\,z|^2}
+\frac12\int_{\mathbb H} |({\rm Im}\,z)\Phi'(z)|^{2p}\frac{dxdy}{|{\rm Im}\,z|^2}\\
&=2^{p-1}\Vert \Phi \Vert_{{\mathcal B}_p^{\#}}^p+\tfrac12\Vert \Phi \Vert_{{\mathcal B}_{2p}}^{2p}.
\end{align}
Since $\Vert \Phi \Vert_{{\mathcal B}_{2p}} \asymp \Vert \Phi \Vert_{\widehat{\mathcal B}_{2p}}
\lesssim \Vert \Phi \Vert_{\widehat{\mathcal B}_{p}}$ by Propositions \ref{norm2} and \ref{new}, this implies that 
$\Vert J(\Phi) \Vert_{{\mathcal A}_p} \leq c_p\Vert \Phi \Vert_{\widehat{\mathcal B}_p}$ for some
$c_p>0$, and in particular, $J$ is locally bounded.
It is easy to see that $J:\widehat{\mathcal B}_p(\mathbb H) \to {\mathcal A}_p(\mathbb H)$ is G\^{a}teaux holomorphic, 
and hence holomorphic by Proposition \ref{G-holomorphic}.
\end{proof}

We consider the holomorphic map $J$ on the image $L(M_p(\mathbb H^+))$ of the pre-Schwarzian derivative map.
We note that $J$ is injective on $L(M(\mathbb H^+))$. Since $F^\mu$ is normalized by
fixing $\infty$, it is determined by $\mu \in M(\mathbb H^+)$ up to post-composition by affine transformations of $\mathbb C$.
Therefore, 
for $\mu, \nu \in M(\mathbb H^+)$, $S_{F^{\mu}}=S_{F^{\nu}}$ if and only if $N_{F^{\mu}}=N_{F^{\nu}}$.
This shows the injectivity of $J$ on $L(M(\mathbb H^+))$, and hence on $L(M_p(\mathbb H^+))$.

The existence of a local holomorphic right inverse of the Schwarzian derivative map $S$ is
a crucial fact for the holomorphy of $J^{-1}$. 
The following claim has appeared in \cite[Theorem 4.1]{WM-1}.
Its proof omits the argument of approximating a given Schwarzian derivative 
by those extending holomorphically
to the boundary; however, this part can be verified by using \cite[Proposition 3]{Su}. 

\begin{proposition}\label{S-section}
Let $S:M_p(\mathbb H^+) \to {\mathcal A}_p(\mathbb H^-)$
be the Schwarzian derivative map for $p \geq 1$.
For each $\Psi_0$ in $S(M_p(\mathbb H^+))$, there exists a
neighborhood $V_{\Psi_0}$ of $\Psi_0$ in ${\mathcal A}_p(\mathbb H^-)$ 
and a holomorphic map $\sigma:V_{\Psi_0} \to M_p(\mathbb H^+)$ such that
$S \circ \sigma$ is the identity on $V_{\Psi_0}$.
\end{proposition}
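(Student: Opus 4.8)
The plan is to obtain the section from the holomorphic implicit function theorem, which reduces the statement to showing that $S$ is a \emph{holomorphic split submersion} at every point. Fix $\Psi_0 \in S(M_p(\mathbb H^+))$ and choose $\mu_0 \in M_p(\mathbb H^+)$ with $S(\mu_0)=\Psi_0$. By Proposition \ref{S-holo}, $S \colon M_p(\mathbb H^+) \to {\mathcal A}_p(\mathbb H^-)$ is holomorphic, so it suffices to prove that its complex derivative $d_{\mu_0}S$ --- a bounded complex-linear operator into ${\mathcal A}_p(\mathbb H^-)$ from the Banach space on which $M_p(\mathbb H^+)$ is modelled, namely the Beltrami differentials with finite $\Vert \cdot \Vert_p + \Vert \cdot \Vert_\infty$ --- is surjective with complemented kernel, equivalently that it admits a bounded complex-linear right inverse. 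Granting this, the holomorphic implicit function theorem (see \cite{Ch}) produces a neighborhood $V_{\Psi_0}$ of $\Psi_0$ and a holomorphic map $\sigma \colon V_{\Psi_0} \to M_p(\mathbb H^+)$ with $\sigma(\Psi_0)=\mu_0$ and $S \circ \sigma = \mathrm{id}$; after shrinking $V_{\Psi_0}$ the values stay in the ball $\Vert \cdot \Vert_\infty < 1$ by continuity, so $\sigma$ is the asserted local holomorphic section.

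The first step is to pin down $d_{\mu_0}S$. Using the integral formula of Remark \ref{S-case} with $F=F^{\mu_0}$, $\Omega^{\pm}=F(\mathbb H^{\pm})$ (a complementary pair of quasidisks), the Riemann map $G\colon \mathbb H^{+}\to\Omega^{+}$, and the conformally natural push-forward $G_{*}$ of Beltrami differentials, one has
$$
d_{\mu_0}S(\nu)\bigl(F^{-1}(\zeta)\bigr)\,(F^{-1})'(\zeta)^{2}=-\frac{6}{\pi}\int_{\Omega^{+}}\frac{G_{*}(\nu)(w)}{(w-\zeta)^{4}}\,du\,dv \qquad (\zeta\in\Omega^{-}).
$$
Since $\Phi\mapsto(\Phi\circ F)(F')^{2}$ is an isometry of ${\mathcal A}_p(\Omega^-)$ onto ${\mathcal A}_p(\mathbb H^-)$ and $\nu\mapsto G_{*}(\nu)$ is a bounded complex-linear isomorphism of the model space of $M_p(\mathbb H^+)$ onto the corresponding space over $\Omega^{+}$, the operator $d_{\mu_0}S$ is equivalent, up to bounded linear isomorphisms of the source and the target, to the beta operator
$$
\beta_{\Omega}(\lambda)(\zeta)=\int_{\Omega^{+}}\frac{\lambda(w)}{(w-\zeta)^{4}}\,du\,dv \qquad (\zeta\in\Omega^{-})
$$
from the model space over $\Omega^{+}$ to ${\mathcal A}_p(\Omega^-)$. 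Hence everything comes down to showing that $\beta_{\Omega}$ is a split surjection for every such pair $\Omega^{\pm}$. Surjectivity, with a bounded conjugate-linear right inverse, follows from a quasiconformal reflection $j\colon\Omega^{+}\to\Omega^{-}$ across the quasicircle $\partial\Omega^{-}$: the generalized Ahlfors--Weill recipe sends $\chi\in{\mathcal A}_p(\Omega^-)$ to a Beltrami differential $w\mapsto c\,\rho_{\Omega^{+}}(w)^{-2}\,\overline{\chi(j(w))\,j'(w)^{2}}$ lying in the model space with norm $\asymp\Vert\chi\Vert_{{\mathcal A}_p}$, which, after a bounded invertible correction operator on ${\mathcal A}_p(\Omega^-)$ close to the identity, is carried by $\beta_{\Omega}$ back to $\chi$.

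The essential point is to replace this conjugate-linear right inverse by a \emph{complex-linear} one, that is, to split $\ker\beta_{\Omega}$. I would identify $\ker\beta_{\Omega}$ with the closure, in the model space over $\Omega^{+}$, of the infinitesimally trivial Beltrami differentials, and then produce a bounded projection $P_{\Omega}$ of that space with $\ker P_{\Omega}=\ker\beta_{\Omega}$; then $\beta_{\Omega}$ restricts to a bounded linear bijection of the range of $P_{\Omega}$ onto ${\mathcal A}_p(\Omega^-)$, and its inverse composed with $P_{\Omega}$ is the required bounded complex-linear right inverse. When $\partial\Omega^{-}$ is an analytic curve --- equivalently, when $\Psi_0$, and hence the conformal map $F^{\mu_0}|_{\mathbb H^-}$, extends holomorphically across $\mathbb R$ --- such a $P_{\Omega}$ is given by a weighted Bergman-type projection, whose boundedness on the model space is classical for $p>1$ and, for $p=1$, rests on the special structure of the analytic Besov space ${\mathcal B}_1$ (cf. \cite[Chapter 5]{Zhu}); for these $\Psi_0$ the implicit function theorem already yields the section. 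For a general $\Psi_0\in S(M_p(\mathbb H^+))$, I would invoke \cite[Proposition 3]{Su} to approximate $\mu_0$ in $M_p(\mathbb H^+)$ by Beltrami coefficients whose Schwarzians extend holomorphically across $\mathbb R$; since $dS$ is continuous (being the derivative of a holomorphic map) and a sufficiently small perturbation of a bounded split surjection is again a bounded split surjection with a comparable right inverse, the split-submersion property at the approximants passes to $d_{\mu_0}S$, and the implicit function theorem applies at $\mu_0$. The hard part is precisely this last circle of ideas: constructing $P_{\Omega}$ and controlling its norm on the weighted space of a quasidisk, handling the case $p=1$ where the ordinary Bergman projection is unbounded and the analytic Besov structure must be used instead, and making the reduction to the dense boundary-analytic class (via \cite{Su}) robust enough that the relevant right inverses stay uniformly bounded near $\mu_0$; by comparison, the conformal-naturality bookkeeping of the second paragraph is routine, even though it couples the ${\mathcal A}_p$-norm with the push-forward $G_{*}$.
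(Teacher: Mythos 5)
Your proposal is not a complete proof: the step on which everything hinges---that $d_{\mu_0}S$ admits a \emph{bounded complex-linear} right inverse, so that the holomorphic implicit function theorem applies---is exactly the step you leave open, and it is where the whole difficulty of the proposition sits. You reduce it to the boundedness of a weighted Bergman-type projection $P_\Omega$ on the hyperbolically weighted space of Beltrami differentials over a quasidisk; for $p>1$ this already requires work (the constants depend on the domain $\Omega^+$), and for $p=1$ the ordinary Bergman projection is unbounded on the weighted $L^1$ space, while your appeal to ``the special structure of ${\mathcal B}_1$'' is a placeholder, not an argument---note that handling $p=1$ is precisely the point of this paper. Your closing reduction also has a quantitative hole: to transfer split surjectivity from a boundary-analytic approximant $\mu_n$ to $\mu_0$ you need $\Vert d_{\mu_0}S-d_{\mu_n}S\Vert$ to be small compared with $1/C_n$, where $C_n$ is the right-inverse norm at $\mu_n$; but $C_n$ depends on the varying domain $\Omega^+_n$ and you have no control of it in terms of the approximation rate furnished by \cite[Proposition 3]{Su}, so ``a sufficiently small perturbation of a split surjection is a split surjection'' does not close the argument as stated. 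A smaller slip: the classical Ahlfors--Weill section is complex-linear, $\mu(z)=-2({\rm Im}\,z)^2\Psi(\bar z)$, not conjugate-linear, so part of the machinery you introduce to repair conjugate-linearity addresses a problem partly of your own making; and at a general base point your harmonic-differential recipe involving $j'$ for a merely quasiconformal reflection $j$ is not even well defined.

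The paper proves the proposition by a different and more economical route that avoids the implicit function theorem and any kernel-splitting altogether: it cites \cite[Theorem 4.1]{WM-1}, where the section is constructed directly, and patches the one missing step (approximation of a given Schwarzian by ones extending holomorphically across the boundary) by \cite[Proposition 3]{Su}. Concretely, one chooses $\nu$ with $S(\nu)=\Psi_0$ whose quasiconformal extension is a real-analytic bi-Lipschitz diffeomorphism, forms the associated quasiconformal reflection $r:\Omega^+\to\Omega^-$, and for $\Psi$ near $\Psi_0$ solves $2w''+F^\nu_*(\Psi)w=0$ and extends by the generalized Ahlfors--Weill formula; this is exactly the construction recorded in Proposition \ref{real-analytic}. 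Then $\sigma(\Psi)$ is the complex dilatation of the resulting map; it lies in $M_p(\mathbb H^+)$ with norm controlled by $\Vert\Psi-\Psi_0\Vert_{{\mathcal A}_p}$ thanks to the bi-Lipschitz reflection, and it depends holomorphically on $\Psi$ because the normalized solutions $w_1,w_2$ do. This works uniformly for all $p\geq 1$ precisely because no projection onto a complement of $\ker d_{\mu_0}S$ is ever needed; to salvage your approach you would have to genuinely establish the complemented-kernel/bounded-projection statement on quasidisks for every $p\geq 1$, including $p=1$, which is a substantial undertaking and possibly false in the form you state it.
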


In addition, because
the quasiconformal homeomorphism of $\mathbb H^+$ corresponding to $\Psi \in V_{\Psi_0}$ can be
explicitly represented by using a real-analytic quasiconformal reflection and by solving the Schwarzian differential equation, it is a real-analytic diffeomorphism.

\begin{proposition}\label{real-analytic}
For the local holomorphic right inverse $\sigma:V_{\Psi_0} \to M_p(\mathbb H^+)$ of $S$
given in Proposition \ref{S-section}, let $\mu=\sigma(\Psi)$ for any $\Psi \in V_{\Psi_0}$.
Then, the quasiconformal homeomorphism $\widetilde F^\mu$ of $\mathbb H^+$ with $\widetilde F^\mu(\infty)=\infty$
whose complex dilatation is $\mu$ is a real-analytic diffeomorphism.
\end{proposition}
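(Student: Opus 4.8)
The plan is to make precise the informal description given just before the statement: the quasiconformal homeomorphism $\widetilde F^\mu$ is built by explicitly solving the Schwarzian differential equation $S_{\widetilde F}=\Psi$ on $\mathbb H^-$ and then extending across $\mathbb R$ to $\mathbb H^+$ by a quasiconformal reflection, and every ingredient in this construction depends real-analytically on $\Psi$ and is a real-analytic diffeomorphism in the variable $z$. First I would recall the standard construction underlying Proposition \ref{S-section}: given $\Psi_0\in S(M_p(\mathbb H^+))$, there is $\mu_0\in M_p(\mathbb H^+)$ with $S(\mu_0)=\Psi_0$, and the conformal map $F^{\mu_0}$ of $\mathbb H^-$ extends to a quasiconformal self-map of $\widehat{\mathbb C}$; one fixes a real-analytic quasiconformal reflection $j$ across the quasicircle $F^{\mu_0}(\mathbb R)$ and, for $\Psi$ near $\Psi_0$, solves the linear ODE $\eta''+\tfrac12\Psi\,\eta=0$ on $\mathbb H^-$ to get a conformal $f_\Psi$ with $S_{f_\Psi}=\Psi$, and then defines the extension to $\mathbb H^+$ by $f_\Psi$ conjugated by $j$ (precisely, by the formula $\widetilde F(z)=j\circ f_\Psi\circ \bar{\,\cdot\,}$-type reflection, arranged so the complex dilatation is supported on $\mathbb H^+$). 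The complex dilatation of this extension is $\mu=\sigma(\Psi)$, which is exactly the right inverse of Proposition \ref{S-section}.

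The next step is to record why $\widetilde F^\mu$ is a diffeomorphism at all: on $\mathbb H^-$ it is the conformal map $f_\Psi$, which is a holomorphic diffeomorphism onto its image; on $\mathbb H^+$ it is the composition of $f_\Psi$ with the reflection, and since the reflection is a diffeomorphism of a neighborhood of the quasicircle and $f_\Psi$ is conformal, the composition is a diffeomorphism there; and the two pieces fit together smoothly along $\mathbb R$ because $f_\Psi$ extends holomorphically (hence smoothly) across $\mathbb R$ after the local approximation argument invoked in Proposition \ref{S-section} (using \cite[Proposition 3]{Su}), so that a neighborhood of $\mathbb R$ is mapped diffeomorphically. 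The normalization $\widetilde F^\mu(\infty)=\infty$ is imposed exactly as in the definition of $F^\mu$.

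For the real-analyticity, I would argue in two stages. In the $z$-variable: the coefficient $\Psi$ of the ODE is holomorphic on $\mathbb H^-$, so by analytic dependence of solutions of linear ODEs on the independent variable, $f_\Psi$ is holomorphic, hence real-analytic, on $\mathbb H^-$; the chosen quasiconformal reflection $j$ is real-analytic by construction (this is the whole point of choosing it real-analytic rather than merely quasiconformal); therefore the extension to $\mathbb H^+$, being a composition of real-analytic maps, is real-analytic, and across $\mathbb R$ the holomorphic extension of $f_\Psi$ keeps it real-analytic there as well. (If one also wants joint real-analyticity in $\Psi$, one notes that the ODE depends holomorphically on the parameter $\Psi\in V_{\Psi_0}$, so $f_\Psi$ depends holomorphically on $\Psi$, and the reflection can be taken independent of $\Psi$ or depending real-analytically on it; but only real-analyticity in $z$ for fixed $\mu$ is asserted in the statement.)

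The main obstacle, as the authors themselves flag in the sentence preceding Proposition \ref{S-section}, is the regularity across the boundary circle $\mathbb R$: a priori the conformal map $f_\Psi$ of $\mathbb H^-$ need not extend across $\mathbb R$ at all, and the quasiconformal reflection across a general quasicircle is only quasiconformal, not real-analytic. Both difficulties are resolved by the same device: one first approximates $\Psi$ by Schwarzian derivatives that extend holomorphically past the boundary — here one invokes \cite[Proposition 3]{Su} — so that the relevant quasicircle is a real-analytic curve and admits a genuine real-analytic reflection, and then transfers the conclusion back by the holomorphic (hence continuous) dependence established in Proposition \ref{S-section}. Thus the real work is entirely contained in the construction behind Proposition \ref{S-section}, and the present proposition is essentially the observation that that construction, read with care, produces a real-analytic diffeomorphism; I would write the proof as a short unwinding of that construction rather than redoing the estimates.
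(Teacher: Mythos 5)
Your overall plan (unwind the explicit construction behind Proposition \ref{S-section}: a real-analytic quasiconformal reflection plus holomorphic solutions of the Schwarzian ODE) is the right spirit, but two steps as you describe them would not work, and they are exactly the points where the paper's proof has content. First, the extension formula. Writing the extension of $f_\Psi$ to the other side as ``$f_\Psi$ conjugated by $j$'', i.e.\ a map of the form $j\circ f_\Psi\circ(\text{reflection})$, fails: the reflection $j$ fixes the \emph{base} quasicircle $F^{\mu_0}(\mathbb R)$ pointwise, not the image curve $f_\Psi(\mathbb R)$, so for $\Psi\neq\Psi_0$ this composition does not even match $f_\Psi$ continuously along the boundary, and there is no way to ``arrange'' its dilatation to be supported on one side. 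What the paper actually uses is the Ahlfors--Weill/Ahlfors--Lehto type formula: one pushes $\Psi$ forward to $\Omega^-=F^\nu(\mathbb H^-)$, takes normalized solutions $w_1,w_2$ of $2w''+F^\nu_*(\Psi)w=0$, and extends by
$$
\zeta\longmapsto \frac{w_1(r(\zeta))+(\zeta-r(\zeta))\,w_1'(r(\zeta))}{w_2(r(\zeta))+(\zeta-r(\zeta))\,w_2'(r(\zeta))}\qquad(\zeta\in\Omega^+),
$$
i.e.\ the best M\"obius approximation of $w_1/w_2$ at the reflected point $r(\zeta)$, evaluated at $\zeta$. That this is a quasiconformal map with the correct dilatation $\sigma(\Psi)$ is precisely \cite[Lemma 4]{Su}; real-analyticity is then immediate because $r$ is real-analytic and $w_1,w_2$ are holomorphic, and one finally composes with the real-analytic map $\widetilde F^\nu:\mathbb H^+\to\Omega^+$. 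Without this affine correction term your construction has no extension at all.

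Second, your way of producing the real-analytic reflection and then ``transferring the conclusion back by continuous dependence'' is a genuine gap: real-analyticity of $\widetilde F^{\mu}$ is not a property preserved under limits or under continuous dependence on $\Psi$, so approximating $\Psi$ by Schwarzians extending across the boundary cannot be undone at the end. In the paper the approximation via \cite[Proposition 3]{Su} is used only in the proof of Proposition \ref{S-section} itself; for the present proposition the real-analytic reflection comes from a different source, namely \cite[Lemma 4.3]{WM-1}: one chooses the base point $\nu$ with $S(\nu)=\Psi_0$ so that its quasiconformal extension $\widetilde F^\nu$ is a real-analytic bi-Lipschitz diffeomorphism, and sets $r(\zeta)=F^\nu\bigl(\overline{(\widetilde F^\nu)^{-1}(\zeta)}\bigr)$; no regularity of the quasicircle itself is needed or claimed. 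Relatedly, your assertion that $f_\Psi$ ``extends holomorphically across $\mathbb R$'' for the actual $\Psi$ is false in general and also unnecessary, since the statement only concerns real-analyticity of $\widetilde F^\mu$ on the open half-plane $\mathbb H^+$.
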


\begin{proof}
For $\Psi_0 \in {\mathcal A}_p(\mathbb H^-)$, it is proved in \cite[Lemma 4.3]{WM-1} that there exists
$\nu \in M_p(\mathbb H^+)$ such that $S(\nu)=\Psi_0$ and $\widetilde F^\nu:\mathbb H^+ \to \Omega^+$ 
is a real-analytic bi-Lipschitz diffeomorphism with respect to the hyperbolic metrics on $\mathbb H^+$ and
its image domain $\Omega^+ \subset \mathbb C$. 
Its conformal extension is $F^{\nu}:\mathbb H^- \to \Omega^-={\mathbb C} \setminus \overline{\Omega^+}$. 
Then, the quasiconformal reflection $r:\Omega^+ \to \Omega^-$
with respect to $\partial \Omega^+=\partial \Omega^-$ is defined by 
$$
r(\zeta)=F^\nu\Bigl(\overline{(\widetilde F^{\nu})^{-1}(\zeta)}\Bigr) \quad (\zeta \in \Omega^+),
$$
which is a real-analytic bi-Lipschitz diffeomorphism.

For any $\Psi \in V_{\Psi_0}$, we consider the push-forward $F^\nu_*(\Psi)$ by the conformal
homeomorphism $F^\nu:\mathbb H^- \to \Omega^-$ and solve the differential equation
$2w''(z)+F^\nu_*(\Psi)(z)w(z)=0$ on $\Omega^-$.
Let $w_1$ and $w_2$ be linearly independent solutions so normalized that $w_1w_2'-w_2w_1'=1$.
Then, $S(w_1/w_2)=F^\nu_*(\Psi)$ on $\Omega^-$, 
and
the quasiconformal homeomorphism $\widetilde F^\mu$ of $\mathbb H^+$ whose complex dilatation is $\mu=\sigma(\Psi)$ is given by
the composition of $\widetilde F^\nu:\mathbb H^+ \to \Omega^+$ with
$$
\frac{w_1(r(\zeta))+(\zeta-r(\zeta))w_1'(r(\zeta))}{w_2(r(\zeta))+(\zeta-r(\zeta))w_2'(r(\zeta))},
$$
which is a quasiconformal real-analytic diffeomorphism of $\Omega^+$. We can prove this by \cite[Lemma 4]{Su},
together with its subsequent comment and remark. In particular, $\widetilde F^\mu$ is a real-analytic
diffeomorphism of $\mathbb H^+$.
\end{proof}

Concerning a global right inverse of the Schwarzian derivative map $S$, 
the following result is proved in \cite[Theorem 1.4]{WM-3} in the case $p>1$.

\begin{proposition}\label{global}
For $p>1$, there exists a real-analytic map $\Sigma:S(M_p(\mathbb H^+)) \to M_p(\mathbb H^+)$ such that
$S \circ \Sigma$ is the identity on $S(M_p(\mathbb H^+))$. Moreover, every $\mu \in M_p(\mathbb H^+)$ in the image of $\Sigma$
induces a quasiconformal real-analytic diffeomorphism $\widetilde F^\mu$ of $\mathbb H^+$. 
\end{proposition}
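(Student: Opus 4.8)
The plan is to construct $\Sigma$ explicitly as the composition of the \emph{conformal welding} map with the \emph{barycentric (Douady--Earle) extension}; this is, in essence, the route of \cite{WM-3}. Given $\Psi \in S(M_p(\mathbb H^+)) \subset S(M(\mathbb H^+))$, solve the Schwarzian equation $2w''+\Psi w=0$ on $\mathbb H^-$ and take the ratio of a normalized fundamental system of solutions to recover the normalized conformal map $f=F^\mu|_{\mathbb H^-}\colon\mathbb H^-\to\Omega^-$ with $S_f=\Psi$ (as in the proof of Proposition \ref{real-analytic}). Since $\Psi$ is the Schwarzian of a quasiconformally extendable conformal map, $\Omega^-$ is a quasidisk, so $\Omega^+:=\mathbb C\setminus\overline{\Omega^-}$ is one as well; let $g\colon\mathbb H^+\to\Omega^+$ be the conformal map normalized, like $f$, to fix the boundary points $0,1,\infty$. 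The \emph{welding} $h_\Psi:=g^{-1}\circ f|_{\widehat{\mathbb R}}$ is then a quasisymmetric self-homeomorphism of $\widehat{\mathbb R}$ fixing $0,1,\infty$, and --- as part of the quasisymmetric description of $T_p$, cf. \cite{Ma4} --- it belongs to the $p$-integrable (Weil--Petersson) class precisely because $\Psi\in S(M_p(\mathbb H^+))$.

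Now apply the Douady--Earle extension $E$, so that $E(h_\Psi)$ is a conformally natural, real-analytic quasiconformal diffeomorphism of $\mathbb H^+$ with boundary values $h_\Psi$, and set
$$
\Sigma(\Psi):=\mu_{E(h_\Psi)}=\frac{\partial_{\bar z}E(h_\Psi)}{\partial_z E(h_\Psi)}.
$$
The homeomorphism of $\mathbb C$ that equals $f$ on $\overline{\mathbb H^-}$ and $g\circ E(h_\Psi)$ on $\overline{\mathbb H^+}$ is well defined (the two pieces agree with $f$ on $\widehat{\mathbb R}$), conformal on $\mathbb H^-$, has complex dilatation $\Sigma(\Psi)$ on $\mathbb H^+$, and fixes $0,1,\infty$; hence it coincides with $F^{\Sigma(\Psi)}$, and therefore $S\circ\Sigma(\Psi)=S_f=\Psi$, so $\Sigma$ is a right inverse of $S$. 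Its real-analyticity is assembled from three real-analytic ingredients: solutions of the Schwarzian ODE depend holomorphically on $\Psi$, and Riemann maps of domains whose quasicircle boundary varies holomorphically with a parameter depend real-analytically on it, so $\Psi\mapsto h_\Psi$ is real-analytic; $h\mapsto E(h)$ is real-analytic because $E(h)(w)$ is cut out by a finite-dimensional real-analytic equation with non-degenerate differential; and $E(h)\mapsto\mu_{E(h)}$ is a rational operation in first-order derivatives, hence real-analytic.

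The essential point --- and the only place where genuine estimates are needed --- is that $\Sigma(\Psi)=\mu_{E(h_\Psi)}$ really lies in $M_p(\mathbb H^+)$, i.e.
$$
\int_{\mathbb H^+}|\mu_{E(h_\Psi)}(z)|^p\,\frac{dx\,dy}{|{\rm Im}\,z|^2}<\infty.
$$
I expect this to be the main obstacle. One must bound the complex dilatation of the barycentric extension near $\widehat{\mathbb R}$ by local quantities of $h_\Psi$ (averaged distortions of $h_\Psi$ over hyperbolic balls of bounded radius), and then combine this with the characterization of membership in $M_p$, equivalently of the $p$-Weil--Petersson class, in terms of the $p$-integrability against the hyperbolic area of those same local quantities. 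It is cleanest to transfer the construction to the disk $\mathbb D$, where the Douady--Earle barycenter and the estimates on its derivatives are standard and where the $p$-class is governed by a Besov/Zygmund-type condition on $h$; the hypothesis $p>1$ enters decisively here, and this is exactly where the argument fails for $p=1$ (compare the restriction $p>1$ in Corollary \ref{product}).

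Finally, for $\mu=\Sigma(\Psi)$ in the image of $\Sigma$, the map $E(h_\Psi)\colon\mathbb H^+\to\mathbb H^+$ is a real-analytic diffeomorphism with complex dilatation $\mu$, so $\widetilde F^\mu\circ E(h_\Psi)^{-1}$ is conformal; writing $\widetilde F^\mu=\phi\circ E(h_\Psi)$ with $\phi$ conformal then exhibits $\widetilde F^\mu$ as a quasiconformal real-analytic diffeomorphism of $\mathbb H^+$ (alternatively, this follows at once from Proposition \ref{real-analytic}). As for alternatives to the welding/barycentric route: one could try to patch the local holomorphic right inverses of Proposition \ref{S-section}, or to run the reflection formula of Proposition \ref{real-analytic} from one fixed base Schwarzian; but local holomorphic sections need not glue to a global one, and the reflection formula is valid only in a neighborhood of the chosen base point, so neither yields a global section for free. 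This is why the conformally natural extension seems to be the right tool.
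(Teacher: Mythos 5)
The paper does not actually prove Proposition \ref{global}; it quotes it from \cite[Theorem 1.4]{WM-3}. Your outline is the classical route for results of this kind (welding to recover the quasisymmetric boundary map $h_\Psi=g^{-1}\circ f|_{\widehat{\mathbb R}}$ from $\Psi$, then a conformally natural real-analytic quasiconformal extension of $h_\Psi$, then gluing $f$ with $g\circ E(h_\Psi)$ to identify $F^{\Sigma(\Psi)}$ and conclude $S\circ\Sigma=\mathrm{id}$), and those soft parts of your argument are fine; the real-analyticity of $\widetilde F^{\Sigma(\Psi)}=g\circ E(h_\Psi)$ also comes out of your own construction (though not, as you suggest in passing, ``at once from Proposition \ref{real-analytic}'', which concerns only the local sections built from quasiconformal reflections).

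The genuine gap is exactly the step you flag and then defer: the claim that $\mu_{E(h_\Psi)}$ lies in $M_p(\mathbb H^+)$ for every $p>1$, together with the real-analyticity of $\Psi\mapsto\mu_{E(h_\Psi)}$ as a map from the open set $S(M_p(\mathbb H^+))\subset\mathcal A_p(\mathbb H^-)$ into the Banach space $M_p(\mathbb H^+)$ with the norm $\Vert\cdot\Vert_p+\Vert\cdot\Vert_\infty$. This is not a routine verification to be filled in later; it is the entire content of the theorem the paper cites. For the barycentric extension the $p$-integrability of the dilatation is known for $p=2$ (Cui \cite{Cu}) and $p\geq 2$ (Tang--Shen \cite{TS}), and extending the section to all $p>1$ (in particular $1<p<2$) is precisely what \cite{WM-3} supplies, by estimates you neither state nor carry out; it is also not automatic that the Douady--Earle estimates go through in that range, which is one reason the case $p>1$ required a separate treatment in the literature. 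Likewise, your argument for analyticity (holomorphic dependence of the ODE solutions, ``Riemann maps depend real-analytically on the boundary'', an implicit-function argument for the barycenter) gives at best pointwise or finite-dimensional statements, not analyticity of $\Sigma$ into $L^p\cap L^\infty$, which again has to be proved with quantitative control. So the architecture of your proposal matches the proof the paper relies on, but its two decisive estimates are assumed rather than established, and without them the proposal does not prove the proposition.
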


We are ready to prove the desired claim.

\begin{theorem}\label{SL}
For $p \geq 1$,
the holomorphic map 
$J:\widehat{\mathcal B}_p(\mathbb H^-) \to {\mathcal A}_p(\mathbb H^-)$ with
$J \circ L=S$ is a biholomorphic homeomorphism between $L(M_p(\mathbb H^+))$ and
$S(M_p(\mathbb H^+))$. 
\end{theorem}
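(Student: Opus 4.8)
The plan is to show that $J$ restricted to $L(M_p(\mathbb H^+))$ is a holomorphic bijection onto $S(M_p(\mathbb H^+))$ with holomorphic inverse, treating the three structural ingredients separately: surjectivity and well-definedness of the image, injectivity, and holomorphy of the inverse. Well-definedness and the identity $J\circ L=S$ are already in hand: by Theorem~\ref{P-holo} and Lemma~\ref{BtoA} the composition $J\circ L$ makes sense and equals the map $\mu\mapsto S_{F^\mu}$ on $M_p(\mathbb H^+)$ by the pointwise identity $S_F=(N_F)'-\tfrac12(N_F)^2$, so $J\bigl(L(M_p(\mathbb H^+))\bigr)=S(M_p(\mathbb H^+))$, giving surjectivity onto the claimed target. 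Injectivity on $L(M_p(\mathbb H^+))$ was already noted in the paragraph preceding Proposition~\ref{S-section}: since $F^\mu$ is normalized by fixing $\infty$, the equation $N_{F^\mu}=N_{F^\nu}$ is equivalent to $S_{F^\mu}=S_{F^\nu}$, so $J$ separates points of $L(M_p(\mathbb H^+))$. Thus $J|_{L(M_p(\mathbb H^+))}\colon L(M_p(\mathbb H^+))\to S(M_p(\mathbb H^+))$ is a holomorphic bijection, and the only real content is that the set-theoretic inverse $J^{-1}$ is holomorphic.

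For the holomorphy of $J^{-1}$ I would argue locally and exploit the local holomorphic right inverse $\sigma\colon V_{\Psi_0}\to M_p(\mathbb H^+)$ of the Schwarzian derivative map furnished by Proposition~\ref{S-section}. Given $\Psi_0\in S(M_p(\mathbb H^+))$, on the neighborhood $V_{\Psi_0}\subset\mathcal A_p(\mathbb H^-)$ the map $L\circ\sigma\colon V_{\Psi_0}\to\widehat{\mathcal B}_p(\mathbb H^-)$ is holomorphic (composition of the holomorphic maps from Proposition~\ref{S-section} and Theorem~\ref{P-holo}), and it takes values in $L(M_p(\mathbb H^+))$. Moreover for $\Psi\in V_{\Psi_0}$ we have $J\bigl(L(\sigma(\Psi))\bigr)=S(\sigma(\Psi))=\Psi$, so $L\circ\sigma$ is a holomorphic right inverse of $J$ on $V_{\Psi_0}$. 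Combined with the injectivity of $J$ on $L(M_p(\mathbb H^+))$, this forces $L\circ\sigma$ to coincide with $J^{-1}$ on $V_{\Psi_0}\cap S(M_p(\mathbb H^+))$: indeed, if $\Phi\in L(M_p(\mathbb H^+))$ with $J(\Phi)=\Psi\in V_{\Psi_0}$, then $J(\Phi)=J(L(\sigma(\Psi)))$ and both $\Phi$ and $L(\sigma(\Psi))$ lie in $L(M_p(\mathbb H^+))$, whence $\Phi=L(\sigma(\Psi))$. Since $\Psi_0$ was arbitrary, $J^{-1}$ is holomorphic on all of $S(M_p(\mathbb H^+))$, and therefore $J$ is biholomorphic between $L(M_p(\mathbb H^+))$ and $S(M_p(\mathbb H^+))$.

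One technical point deserves attention. The sets $L(M_p(\mathbb H^+))$ and $S(M_p(\mathbb H^+))$ are not a priori open subsets of the model Banach spaces, so "biholomorphic" and "holomorphy of $J^{-1}$" must be interpreted in the sense appropriate to these images as subsets; the cleanest route is to note that $S(M_p(\mathbb H^+))$ carries the structure of a complex Banach manifold via the local charts $\sigma$ of Proposition~\ref{S-section} (this is exactly how the complex structure on $T_p$ is set up in the references), and that $L\circ\sigma$ provides compatible charts on $L(M_p(\mathbb H^+))$, with respect to which $J$ and $J^{-1}=L\circ\sigma$ are manifestly holomorphic. I expect the main obstacle to be purely expository rather than mathematical: making precise the sense in which $J^{-1}$ is holomorphic on a possibly non-open image and ensuring that the local inverses $L\circ\sigma_{\Psi_0}$ patch consistently — but consistency is automatic from the injectivity of $J$ on $L(M_p(\mathbb H^+))$ established above, so no cocycle verification is needed beyond that observation.
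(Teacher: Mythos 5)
Your proposal is correct and follows essentially the same route as the paper: both use $J\circ L=S$ together with the injectivity of $J$ on $L(M_p(\mathbb H^+))$ (from the normalization at $\infty$), and then invoke the local holomorphic right inverse $\sigma$ of $S$ from Proposition~\ref{S-section} so that $L\circ\sigma$ serves as a local holomorphic inverse of $J|_{L(M_p(\mathbb H^+))}$. Your closing remark on interpreting holomorphy on the (a priori non-open) images is a reasonable expository supplement, but the mathematical content matches the paper's proof.
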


\begin{proof}
Since $J \circ L=S$,
the restriction $J|_{L(M_p(\mathbb H^+))}$
of the holomorphic map $J:{\mathcal A}_p(\mathbb H^-) \to \widehat{\mathcal B}_p(\mathbb H^-)$ 
given in Lemma \ref{BtoA}
sends $L(M_p(\mathbb H^+))$ into $S(M_p(\mathbb H^+))$ injectively. Conversely, Proposition \ref{S-section}
shows that, for every 
$\Psi_0 \in S(M_p(\mathbb H^+))$, there is a local holomorphic map $\sigma:V_{\Psi_0} \to M_p(\mathbb H^+)$ 
such that
$S \circ \sigma$ is the identity on $V_{\Psi_0} \subset S(M_p(\mathbb H^+))$. Then,
$J \circ L\circ \sigma$ is the identity on $V_{\Psi_0}$, and hence $L\circ \sigma$ is a local holomorphic right
inverse of $J$. This implies that $J$ is a
biholomorphic homeomorphism of $L(M_p(\mathbb H^+))$ onto $S(M_p(\mathbb H^+))$.
\end{proof}

\begin{corollary}\label{L-section}
For each $\Phi_0$ in $L(M_p(\mathbb H^+))$ 
with $p \geq 1$, there exists a
neighborhood $U_{\Phi_0}$ of $\Phi_0$ in $\widehat {\mathcal B}_p(\mathbb H^-)$ 
and a holomorphic map $\tau:U_{\Phi_0} \to M_p(\mathbb H^+)$ such that
$L \circ \tau$ is the identity on $U_{\Phi_0}$.
\end{corollary}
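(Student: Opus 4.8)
The plan is to derive the corollary from Theorem~\ref{SL} together with the local holomorphic right inverse $\sigma$ of the Schwarzian derivative map provided by Proposition~\ref{S-section}; the one genuinely new point needed is that $L(M_p(\mathbb H^+))$ is in fact \emph{open} in $\widehat{\mathcal B}_p(\mathbb H^-)$, which is exactly what allows $U_{\Phi_0}$ to be a neighbourhood in the ambient space (this parallels the observation that $V_{\Psi_0}=S(\sigma(V_{\Psi_0}))\subseteq S(M_p(\mathbb H^+))$ already shows $S(M_p(\mathbb H^+))$ open in $\mathcal A_p(\mathbb H^-)$). First I would fix $\Phi_0=L(\mu_0)\in L(M_p(\mathbb H^+))$, set $\Psi_0:=J(\Phi_0)=S(\mu_0)$, take $V_{\Psi_0}$ and $\sigma:V_{\Psi_0}\to M_p(\mathbb H^+)$ with $S\circ\sigma=\mathrm{id}$ as in Proposition~\ref{S-section}, and, using the continuity of $J$ from Lemma~\ref{BtoA}, form the open neighbourhood $\widetilde U:=J^{-1}(V_{\Psi_0})$ of $\Phi_0$ in $\widehat{\mathcal B}_p(\mathbb H^-)$ together with the holomorphic map $\tau_0:=\sigma\circ J|_{\widetilde U}:\widetilde U\to M_p(\mathbb H^+)$.

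Next I would analyse the holomorphic self-map $R:=L\circ\tau_0=(L\circ\sigma)\circ J$ of (an open subset of) $\widehat{\mathcal B}_p(\mathbb H^-)$, which is holomorphic by Theorem~\ref{P-holo}, Proposition~\ref{S-section} and Lemma~\ref{BtoA}. By the proof of Theorem~\ref{SL}, $L\circ\sigma$ is the local holomorphic inverse of $J|_{L(M_p(\mathbb H^+))}$; hence $R(\widetilde U)\subseteq L(M_p(\mathbb H^+))$, $R$ fixes every point of $L(M_p(\mathbb H^+))\cap\widetilde U$, and $J\circ R=J$ on $\widetilde U$, so that $R\circ R=R$. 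Differentiating $J\circ R=J$ at $\Phi_0$ gives $(\mathrm{id}-d_{\Phi_0}R)\,\widehat{\mathcal B}_p(\mathbb H^-)\subseteq\ker d_{\Phi_0}J$, so the whole argument reduces to showing $\ker d_{\Phi_0}J=\{0\}$, which then forces $d_{\Phi_0}R=\mathrm{id}$.

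I expect this kernel computation to be the only non-formal step. Since $d_{\Phi_0}J(\Psi)=\Psi''-\Phi_0'\Psi'$ and the solutions of $\psi'=\Phi_0'\psi$ are precisely the constant multiples of $e^{\Phi_0}=(F^{\mu_0})'$, any non-zero element of $\ker d_{\Phi_0}J$ would, modulo an additive constant, be a non-zero scalar multiple of $F^{\mu_0}$ belonging to $\widehat{\mathcal B}_p(\mathbb H^-)\subseteq{\rm BMOA}(\mathbb H^-)\subseteq\mathcal B_\infty(\mathbb H^-)$ (Proposition~\ref{norm}). But $F^{\mu_0}$ is normalized so as to fix $0$ and $\infty$ and extends to a quasiconformal homeomorphism of $\mathbb C$ with $F^{\mu_0}(z)\to\infty$ as $z\to\infty$; applying the H\"older continuity of quasiconformal maps to $w\mapsto 1/F^{\mu_0}(1/w)$ near the origin gives $|F^{\mu_0}(-it)|\gtrsim t^{\alpha}$ for some $\alpha>0$ as $t\to\infty$, whereas any $\Phi\in\mathcal B_\infty(\mathbb H^-)$ satisfies $|\Phi(-it)-\Phi(-i)|\le \Vert\Phi\Vert_{\mathcal B_\infty}\log t$ by integrating $\Phi'$ along the imaginary axis. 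These growth rates are incompatible, so $F^{\mu_0}\notin\mathcal B_\infty(\mathbb H^-)$ and hence $\ker d_{\Phi_0}J=\{0\}$.

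Finally, with $d_{\Phi_0}R=\mathrm{id}$ the holomorphic inverse mapping theorem on Banach spaces shows that $R$ restricts to a biholomorphism of a neighbourhood $W\ni\Phi_0$ with $W\subseteq\widetilde U$ onto the open set $R(W)\ni\Phi_0$. Setting $U_{\Phi_0}:=W\cap R(W)$, which is an open neighbourhood of $\Phi_0$ in $\widehat{\mathcal B}_p(\mathbb H^-)$, for $\Phi\in U_{\Phi_0}$ we may write $\Phi=R(\eta)$ with $\eta\in W$, and then $R(\Phi)=R(R(\eta))=R(\eta)=\Phi$ by the idempotency $R\circ R=R$; thus $U_{\Phi_0}\subseteq R(\widetilde U)\subseteq L(M_p(\mathbb H^+))$ and, with $\tau:=\tau_0|_{U_{\Phi_0}}$, the map $L\circ\tau=R|_{U_{\Phi_0}}$ is the identity on $U_{\Phi_0}$. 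In particular this proves $L(M_p(\mathbb H^+))$ open in $\widehat{\mathcal B}_p(\mathbb H^-)$. It is worth noting where the normalization fixing $\infty$ enters: in the disk model of Section~4 the analogous map $J$ is merely a holomorphic split submersion (Theorem~\ref{bundle}), precisely because the corresponding kernel is no longer trivial there.
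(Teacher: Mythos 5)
Your proof is correct, but it reaches the conclusion by a different mechanism than the paper. The paper's proof is a two-line composition: it takes $U_{\Phi_0}=J^{-1}(V_{\Psi_0})$ and $\tau=\sigma\circ J$ for the $V_{\Psi_0}$, $\sigma$ of Proposition \ref{S-section}, and the identity $L\circ\tau=\mathrm{id}$ on this whole preimage rests on the bijectivity of $J:L(M_p(\mathbb H^+))\to S(M_p(\mathbb H^+))$ from Theorem \ref{SL} together with the implicit fact that every $\Phi\in\widehat{\mathcal B}_p(\mathbb H^-)$ with $J(\Phi)\in S(M_p(\mathbb H^+))$ already lies in $L(M_p(\mathbb H^+))$, i.e.\ the half-plane counterpart of Lemma \ref{noother}. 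You use the same $\tau=\sigma\circ J$ but avoid that global containment: you justify $L\circ\tau=\mathrm{id}$ only on a possibly smaller neighbourhood, by showing the retraction $R=L\circ\sigma\circ J$ is idempotent with $d_{\Phi_0}R=\mathrm{id}$, which you deduce from the computation that $\ker d_{\Phi_0}J$ in $\widehat{\mathcal B}_p(\mathbb H^-)$ is spanned modulo constants by $F^{\mu_0}$, and that $F^{\mu_0}$ cannot be Bloch since a quasiconformal homeomorphism of $\mathbb C$ fixing $0,1,\infty$ grows at least like $|z|^{1/K}$ at infinity while Bloch functions on $\mathbb H^-$ grow only logarithmically along the imaginary axis; the holomorphic inverse function theorem plus idempotency then yields simultaneously the openness of $L(M_p(\mathbb H^+))$ at $\Phi_0$ and the desired identity. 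All of these steps check out (the fixed-point property of $R$ on $L(M_p(\mathbb H^+))\cap\widetilde U$ uses the injectivity of $J$ there, which the paper establishes before Theorem \ref{SL}), and your closing remark is accurate: in the disk model $F^{\mu}$ maps onto a bounded domain, the corresponding kernel is one-dimensional, and $J$ is only a split submersion (Theorem \ref{bundle}). What your route buys is a self-contained local argument that makes explicit where the normalization at $\infty$ enters, giving an independent proof that $L(M_p(\mathbb H^+))$ is open near $\Phi_0$; what the paper's route buys is brevity, at the price of leaning on the global description of the fibre of $J$ over $S(M_p(\mathbb H^+))$.
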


\begin{proof}
Let $\Psi_0=J(\Phi_0)$. We choose $V_{\Psi_0}$ and $\sigma:V_{\Psi_0} \to M_p(\mathbb H^+)$ as in Proposition \ref{S-section}.
Then, $U_{\Phi_0}=J^{-1}(V_{\Psi_0})$ and $\tau=\sigma \circ J$ possess the required properties. 
\end{proof}

By Proposition \ref{global}, we also have that $\Sigma \circ J$ is a global real-analytic right inverse of 
the pre-Schwarzian derivative map $L:M_p(\mathbb H^+) \to \widehat {\mathcal B}_p(\mathbb H^-)$ for $p>1$.

As a by-product of the above arguments, we can also obtain a characterization of 
$p$-integrable Beltrami coefficients in terms of the pre-Schwarzian and Schwarzian derivative maps.
This has been given in the case $p>1$ by generalizing \cite[Theorem 4.4]{STW}.
We remark that the reasoning of $(3) \Rightarrow (1)$ in \cite[Theorem 7.1]{WM-4} should be read as given below.

\begin{theorem}\label{conformal}
Let $F:\mathbb H^- \to \mathbb C$ be a conformal map with $F(\infty)=\infty$ that extends to a quasiconformal
homeomorphism of $\mathbb C$.
Then, the following conditions are equivalent for every $p \geq 1$:
\begin{enumerate}
\item
$F$ extends quasiconformally to $\mathbb H^+$ so that its complex dilatation is in $M_p(\mathbb H^+)$;
\item
$\log F'$ belongs to $\widehat{\mathcal B}_p(\mathbb H^-)$;
\item
$S_F$ belongs to ${\mathcal A}_p(\mathbb H^-)$.
\end{enumerate}
\end{theorem}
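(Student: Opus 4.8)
The plan is to prove the cycle of implications $(1)\Rightarrow(2)\Rightarrow(3)\Rightarrow(1)$, drawing on the holomorphy and boundedness results already established. The implication $(1)\Rightarrow(2)$ is immediate from Theorem \ref{P-holo}: if $\mu_F \in M_p(\mathbb H^+)$, then $L(\mu_F) = \log F' \in \widehat{\mathcal B}_p(\mathbb H^-)$ since $\Vert L(\mu_F)\Vert_{\widehat{\mathcal B}_p} \leq \max\{C_p^{\#},3\}(\Vert \mu_F\Vert_p + \Vert \mu_F\Vert_\infty) < \infty$. The implication $(2)\Rightarrow(3)$ is equally quick: if $\log F' \in \widehat{\mathcal B}_p(\mathbb H^-)$, then by Lemma \ref{BtoA} the canonical map $J$ sends it into ${\mathcal A}_p(\mathbb H^-)$, and since $S_F = (N_F)' - \tfrac12 (N_F)^2 = (\log F')'' - \tfrac12((\log F')')^2 = J(\log F')$, we conclude $S_F \in {\mathcal A}_p(\mathbb H^-)$.

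The substantive implication is $(3)\Rightarrow(1)$: from $S_F = S(\mu_F) \in {\mathcal A}_p(\mathbb H^-)$ we must recover $\mu_F \in M_p(\mathbb H^+)$. Here I would invoke Proposition \ref{S-section}. Set $\Psi_0 = S_F$. The point is that $S_F$ lies in $S(M_p(\mathbb H^+))$ \emph{as a point of the a priori larger space} only because of the ambient theory: we know $S_F \in {\mathcal A}_p(\mathbb H^-)$, and we need to see that the \emph{specific} Beltrami coefficient $\mu_F$ producing it is $p$-integrable, not merely that \emph{some} $p$-integrable coefficient has that Schwarzian. This is exactly the normalization subtlety flagged in the excerpt. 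The resolution: apply Proposition \ref{S-section} at $\Psi_0 = S_F \in S(M_p(\mathbb H^+))$ — which requires first knowing $S_F \in S(M_p(\mathbb H^+))$; this is where one must be careful. The correct argument (as the paper hints, "should be read as given below") is that the Bers embedding $\alpha : T_p \to {\mathcal A}_p(\mathbb H^-)$ is a homeomorphism onto its image with the image being open in ${\mathcal A}_p(\mathbb H^-)$ near such points, together with the fact that $S_F$ determines $F$ up to a Möbius transformation of the target and hence $\mu_F$ is uniquely determined among all Beltrami coefficients on $\mathbb H^+$ with that Schwarzian fixing $\infty$. Thus $\sigma(S_F)$, which lies in $M_p(\mathbb H^+)$, must equal $\mu_F$ (after matching normalizations), giving $\mu_F \in M_p(\mathbb H^+)$.

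Concretely, the key steps in order are: (i) dispatch $(1)\Rightarrow(2)$ by Theorem \ref{P-holo}; (ii) dispatch $(2)\Rightarrow(3)$ by Lemma \ref{BtoA} together with the algebraic identity $S_F = J(\log F')$; (iii) for $(3)\Rightarrow(1)$, observe that $S_F \in {\mathcal A}_p(\mathbb H^-)$ together with $S_F$ being the Schwarzian of a globally quasiconformally extendable conformal map places $S_F$ in the image $S(M(\mathbb H^+))$, and then use the characterization of that image intersected with ${\mathcal A}_p(\mathbb H^-)$ to see it lies in $S(M_p(\mathbb H^+))$; (iv) apply Proposition \ref{S-section} to obtain $\sigma(S_F) \in M_p(\mathbb H^+)$ with $S(\sigma(S_F)) = S_F$; (v) conclude by the injectivity observation — $S_{F^{\mu}} = S_{F^{\nu}}$ forces $\mu = \nu$ for normalized coefficients — that $\mu_F = \sigma(S_F) \in M_p(\mathbb H^+)$.

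The main obstacle is step (iii)–(v): the delicate point is that membership $S_F \in {\mathcal A}_p(\mathbb H^-)$ is an assertion about the function $S_F$, whereas $M_p(\mathbb H^+)$ is a space of Beltrami coefficients, and one must bridge them without circularity. The naive route "$S_F \in {\mathcal A}_p$ so by Proposition \ref{S-section} we get a $p$-integrable $\mu$" presupposes $S_F \in S(M_p(\mathbb H^+))$, which is what we are trying to establish. The honest argument uses that $S(M_p(\mathbb H^+))$ is characterized intrinsically — e.g., it is precisely the set of $\Psi \in {\mathcal A}_p(\mathbb H^-)$ that are Schwarzians of conformal maps admitting a quasiconformal extension (equivalently, lie in the Bers-embedded $T_p$), a fact underpinning the homeomorphism results of \cite{WM-1,WM-4} cited in the introduction — and once $S_F$ is known to be such a Schwarzian with $S_F \in {\mathcal A}_p(\mathbb H^-)$, it automatically lies in $S(M_p(\mathbb H^+))$. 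After that, Proposition \ref{S-section} and the uniqueness of the normalized Beltrami coefficient close the loop. I would write step (iii) carefully to make this non-circular, citing the relevant prior characterization rather than re-deriving it.
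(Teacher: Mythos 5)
Your implications $(1)\Rightarrow(2)$ and $(2)\Rightarrow(3)$ are exactly the paper's (Theorem \ref{P-holo}, then Lemma \ref{BtoA} with $S_F=J(\log F')$), but your treatment of $(3)\Rightarrow(1)$ has a genuine gap. The ``intrinsic characterization'' you lean on in step (iii) --- that $S(M(\mathbb H^+))\cap\mathcal A_p(\mathbb H^-)=S(M_p(\mathbb H^+))$, i.e.\ $\alpha(T_p)=\alpha(T)\cap\mathcal A_p(\mathbb H^-)$ --- is not an available prior fact here: in this paper it is stated as \eqref{alphabeta} in Theorem \ref{Bersemb}, which comes \emph{after} and rests on Theorem \ref{conformal}; the openness of $S(M_p(\mathbb H^+))$ and the homeomorphy of the Bers embedding from \cite{WM-1,WM-4} do not yield it, and the external source \cite[Theorem 4.4]{STW} covers only $p>1$, whereas the whole point of this theorem (and the reason the paper flags that the reasoning of \cite[Theorem 7.1]{WM-4} must be corrected) is to include $p=1$. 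So your step (iii) is circular at precisely the critical case. The paper avoids this entirely by a direct construction: transfer $(3)\Rightarrow(1)$ to the unit disk via M\"obius invariance of the Schwarzian and run the proof of \cite[Theorem 2]{Cu}, which uses the Becker--Pommerenke local quasiconformal extension to manufacture, from $S_F\in\mathcal A_p$, a quasiconformal extension of $F$ whose dilatation is $p$-integrable; that argument goes through for all $p\ge 1$.

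Your step (v) contains a second, independent error. The Schwarzian $S_F$ determines the conformal map $F|_{\mathbb H^-}$ up to post-composition by a M\"obius transformation, but it does \emph{not} determine the quasiconformal extension: all Teichm\"uller-equivalent Beltrami coefficients on $\mathbb H^+$ produce the same normalized $F^\mu|_{\mathbb H^-}$ and hence the same $S_F$, so ``$\mu_F$ is uniquely determined among all Beltrami coefficients with that Schwarzian'' is false, and you cannot conclude $\mu_F=\sigma(S_F)$. Indeed, read literally for an arbitrary fixed extension, $(3)\Rightarrow(1)$ would be false (extend the identity on $\mathbb H^-$ by a quasiconformal map with non-integrable dilatation); the implication must be understood as producing \emph{some} extension with dilatation in $M_p(\mathbb H^+)$, which is exactly what the Becker--Pommerenke construction delivers and which makes your uniqueness step unnecessary. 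So the normalization subtlety you worried about is resolved by the existential reading plus an explicit construction, not by an injectivity claim.
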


\begin{proof}
The implication $(1) \Rightarrow (2)$ is obtained by Theorem \ref{P-holo}, and
$(2) \Rightarrow (3)$ by Lemma \ref{BtoA}. 
We may consider $(3) \Rightarrow (1)$ on the unit disk because Schwarzian derivatives are
invariant under M\"obius transformations.
We have to show that
$S_F \in {\mathcal A}_p(\mathbb D)$ implies that $F$ has the desired quasiconformal extension.
However, the same proof as in \cite[Theorem 2]{Cu}, 
relying on the local quasiconformal extension by Becker and Pommerenke \cite[Satz 4]{BP}, 
applies for $p \geq 1$.
\end{proof}

\section{Fiber spaces in the unit disk model}\label{4}

Let $S:M_p(\mathbb D^*) \to {\mathcal A}_p(\mathbb D)$
be the Schwarzian derivative map and $L:M_p(\mathbb D^*) \to \widehat{\mathcal B}_p(\mathbb D)$
the pre-Schwarzian derivative map for $p \geq 1$, defined in a similar way for $\mathbb D$ and 
$\mathbb D^*=\widehat{\mathbb C} \setminus \overline{\mathbb D}$.
Almost all statements in the previous section are also valid for these maps.
The exception occurs for the holomorphic map 
$J:\widehat {\mathcal B}_p(\mathbb D) \to \mathcal A_p(\mathbb D)$ with $J \circ L=S$.
In fact, $J$ maps $L(M_p(\mathbb D^*))$ onto $S(M_p(\mathbb D^*))$ surjectively but not
injectively.
While the statements up to Proposition \ref{global} in the previous section can be 
translated directly to this case, Theorem \ref{SL} requires a modification regarding the 
injectivity of $J:L(M_p(\mathbb D^*)) \to S(M_p(\mathbb D^*))$. 
We will examine the structure of this map more closely.

First, we give the precise definition of the pre-Schwarzian derivative map
$L:M(\mathbb D^*) \to {\mathcal B}_\infty(\mathbb D)$ in the present setting. 
We impose the following normalization on $F^\mu$. 
For $\mu \in M(\mathbb D^*)$, let $F^\mu$ be the conformal
homeomorphism of $\mathbb D$ onto a bounded domain in $\mathbb C$ with $F^\mu(0)=0$ and $(F^\mu)'(0)=1$
that extends to
a quasiconformal self-homeomorphism of ${\mathbb C}$ with complex dilatation $\mu$ on $\mathbb D^*$. 
We assume $F^\mu(\infty)=\infty$.
This normalization uniquely determines $F^\mu$ by $\mu$, and we use the same notation for its quasiconformal extension.
Later, its restriction to $\mathbb D^*$ is denoted by $\widetilde F^\mu$ to distinguish it from the
conformal mapping on $\mathbb D$.
Then the pre-Schwarzian derivative map $L$ is defined by $L(\mu)=\log (F^\mu)'$, 
which belongs to ${\mathcal B}_\infty(\mathbb D)$. 
If $\mu \in M_p(\mathbb D^*)$, then $L(\mu) \in \widehat{\mathcal B}_p(\mathbb D)$.

The fact that $J$ is not injective on $L(M_p(\mathbb D^*))$ is seen from
the following proposition, which can be verified easily (see \cite[Proposition 3.1]{Ma5}).

\begin{proposition}\label{affine}
{\rm (i)}
For $\mu, \nu \in M(\mathbb D^*)$, we have $S_{F^{\mu}}=S_{F^{\nu}}$ if and only if  
$F^{\mu}=W \circ F^{\nu}$ on $\mathbb D$
for some M\"obius transformation $W$ of $\widehat{\mathbb C}$ such that  
$W \circ F^{\nu}(\mathbb D)$ is a bounded domain in $\mathbb C$. Moreover, $N_{F^{\mu}}=N_{F^{\nu}}$ if and only if
$F^{\mu}=W \circ F^{\nu}$ on $\mathbb D$
for some affine transformation $W$ of $\mathbb C$.
{\rm (ii)}
For any $\nu \in M_p(\mathbb D^*)$ with $p \geq 1$ and any M\"obius transformation $W$ 
such that $W \circ F^{\nu}(\mathbb D)$ is a bounded domain,  
there exists some $\mu' \in M_p(\mathbb D^*)$ such that $N_{W \circ F^{\nu}}=N_{F^{\mu'}}$. 
\end{proposition}

Furthermore, the above variations of $F^{\nu}$ by such M\"obius transformations $W$ with
$W \circ F^{\nu}(\mathbb D)$ bounded
yield all $\Phi=\log(W \circ F^{\nu})'$ ($\Phi'=N_{W \circ F^{\nu}}$) in $\widehat{\mathcal B}_p (\mathbb D)$
with $J(\Phi)=S(\nu)$. This is a special case of the more general result shown in \cite[Lemma 3.3]{Ma5}.

\begin{proposition}\label{onlyL}
The set of all holomorphic functions $\Phi=\log (W \circ F^\mu)'$ in $\widehat{\mathcal B}_p (\mathbb D)$,
given by M\"obius transformations $W$ of $\widehat{\mathbb C}$
and $\mu \in M_p(\mathbb D^*)$, coincides with $L(M_p(\mathbb D^*))$ for every $p \geq 1$.
\end{proposition}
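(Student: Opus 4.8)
The plan is to prove the two inclusions separately. The inclusion $L(M_p(\mathbb D^*))\subset\{\log(W\circ F^\mu)'\}$ is immediate: take $W=\mathrm{id}$ and recall that $L(\mu)=\log (F^\mu)'$ lies in $\widehat{\mathcal B}_p(\mathbb D)$ for every $\mu\in M_p(\mathbb D^*)$ by the unit-disk version of Theorem \ref{P-holo}. The substance is the reverse inclusion: given a M\"obius transformation $W$ of $\widehat{\mathbb C}$ and $\mu\in M_p(\mathbb D^*)$ with $\Phi:=\log(W\circ F^\mu)'\in\widehat{\mathcal B}_p(\mathbb D)$, I must produce $\nu\in M_p(\mathbb D^*)$ with $N_{F^\nu}=\Phi'$, so that $\Phi$ and $L(\nu)$ differ only by an additive constant and hence represent the same element of $\widehat{\mathcal B}_p(\mathbb D)$ in the sense of Section 2.

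First I would record the geometric input. Since $\Phi$ is a holomorphic function on $\mathbb D$, the point $W^{-1}(\infty)$ lies outside $F^\mu(\mathbb D)$, and Lemma \ref{noother} upgrades this to $W^{-1}(\infty)\notin\overline{F^\mu(\mathbb D)}$; hence $g:=W\circ F^\mu$ is a conformal homeomorphism of $\mathbb D$ onto a bounded domain $\Omega=g(\mathbb D)$, which is a quasidisk because $F^\mu(\mathbb D)$ is one and $W$ is M\"obius. Moreover $g$, together with $F^\mu$, extends to a quasiconformal self-homeomorphism of $\widehat{\mathbb C}$, and since $W$ is conformal the complex dilatation of $g$ on $\mathbb D^*$ equals $\mu$ almost everywhere. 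The only defect is that $g$ need not fix $\infty$: it sends $\infty$ to $b:=W(\infty)$, which is a (possibly finite) point of the Jordan domain $\widehat{\mathbb C}\setminus\overline\Omega$.

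The key step is to repair this defect without damaging $p$-integrability. Since $\widehat{\mathbb C}\setminus\overline\Omega$ is a Jordan domain containing both $b$ and $\infty$ in its interior, I can choose an open neighborhood $V\supset\overline\Omega$ with $\overline V\cap\{b,\infty\}=\emptyset$ and then a smooth (hence quasiconformal with dilatation bounded away from $1$) self-homeomorphism $U$ of $\widehat{\mathbb C}$ that is the identity on $V$ and satisfies $U(b)=\infty$ (when $b=\infty$, take $U=\mathrm{id}$). Put $\widetilde g=U\circ g$ and $F=g'(0)^{-1}(\widetilde g-g(0))$. Then $F$ is a quasiconformal self-homeomorphism of $\widehat{\mathbb C}$ fixing $\infty$; it is conformal on $\mathbb D$ because $U$ is the identity on $\Omega\subset V$, with $F(\mathbb D)$ bounded, $F(0)=0$, $F'(0)=1$; and $\log F'=\log g'-\log g'(0)=\Phi-\mathrm{const}$. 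It remains to check that the complex dilatation $\nu$ of $F$ on $\mathbb D^*$ lies in $M_p(\mathbb D^*)$. Since $g^{-1}(V)$ is open and contains $\overline{\mathbb D}$, it contains $\{|z|<R_0\}$ for some $R_0>1$, so $\widetilde g=g$ on the collar $\{1<|z|<R_0\}$ and $\nu=\mu$ there, whereas on the remaining set $\{|z|\ge R_0\}\cup\{\infty\}$, a compact subset of $\mathbb D^*$, the dilatation is bounded and the hyperbolic area is finite; thus the hyperbolic-metric-weighted $\|\nu\|_p^p$ is bounded by $\|\mu\|_p^p$ plus a finite term. Hence $\nu\in M_p(\mathbb D^*)$, $F=F^\nu$, and $\Phi$ represents $L(\nu)$ in $\widehat{\mathcal B}_p(\mathbb D)$.

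I expect the main obstacle to be precisely this last point: arranging that the surgery moving $b$ to $\infty$ is carried out well inside $\widehat{\mathbb C}\setminus\overline\Omega$, so that it does not meet any neighborhood of $\partial\Omega$ and therefore leaves the dilatation untouched near the unit circle, where the hyperbolic weight blows up. This is where both the boundedness of $\Omega$ and the quasidisk property furnished by Lemma \ref{noother} are essential. An alternative, once $\widetilde g$ is in hand, is to invoke the unit-disk form of Theorem \ref{conformal}, implication $(3)\Rightarrow(1)$, using $S_{\widetilde g}=S_g=S(\mu)\in\mathcal A_p(\mathbb D)$ to conclude $\nu\in M_p(\mathbb D^*)$ without the direct estimate.
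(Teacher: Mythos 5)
Your argument is correct and follows essentially the same route as the paper: Lemma \ref{noother} gives that $W\circ F^\mu(\mathbb D)$ is a bounded (quasi)disk, and your map $U$ is exactly the quasiconformal self-homeomorphism $G$ of $\widehat{\mathbb C}$ used in the proof of Proposition \ref{affine} (identity on a neighborhood of $\overline{W\circ F^\mu(\mathbb D)}$, sending $W(\infty)$ to $\infty$), so that the new dilatation agrees with $\mu$ on a collar $1<|z|<R$ and is altered only on a region of finite hyperbolic area, preserving membership in $M_p(\mathbb D^*)$. The extra detail you give on the finite-area part and on the affine renormalization matching $\Phi$ with $L(\nu)$ modulo constants is exactly what the paper leaves implicit, so no gap remains.
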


Let $W_a$ be a M\"obius transformation that sends $a \in \widetilde F^\nu(\mathbb D^*)$ to $\infty$. 
Here and in the sequel, $\widetilde F^\nu$ stands for the quasiconformal extension of $F^\nu$ to $\mathbb D^*$.
Since $W_a \circ F^\nu$ is uniquely determined by $\nu \in M_p(\mathbb D^*)$ and $a \in \widetilde F^\nu(\mathbb D^*)$ up to post-composition by affine transformations of $\mathbb C$, 
we can define a map 
$\widetilde L(\nu,a)=\log(W_a \circ F^\nu)' \in L(M_p(\mathbb D^*))$ on 
the fiber space over $M_p(\mathbb D^*)$ given as a domain in the product manifold
$$
\widetilde M_p(\mathbb D^*)=\{(\nu,a) \in M_p(\mathbb D^*) \times \widehat{\mathbb C} 
\mid a \in \widetilde F^{\nu}(\mathbb D^*)\}.
$$
We note that $\widetilde L(\nu,\infty)=L(\nu)$.

The arguments and results in the rest of this section are applied also to different kinds of 
Teichm\"uller spaces (see \cite{Ma4, Ma5}).

\begin{lemma}\label{localbound}
$\widetilde L:\widetilde M_p(\mathbb D^*) \to L(M_p(\mathbb D^*))$ is holomorphic.
\end{lemma}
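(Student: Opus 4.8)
The plan is to peel off the contribution of the ``moving pole'' $a$ and reduce everything to the already-proved holomorphy of $L$. First, $\widetilde M_p(\mathbb D^*)$ is an open subset of the complex Banach manifold $M_p(\mathbb D^*)\times\widehat{\mathbb C}$: the constraint $a\in\widetilde F^\nu(\mathbb D^*)$ is an open condition because $\nu\mapsto\widetilde F^\nu$ is continuous uniformly on compact subsets of $\widehat{\mathbb C}$ and each $\widetilde F^\nu(\mathbb D^*)$ is open. Fix $(\nu_0,a_0)\in\widetilde M_p(\mathbb D^*)$, and near $a_0$ choose a representative of the relevant M\"obius class depending holomorphically on $a$: $W_a(\zeta)=(\zeta-a)^{-1}$ if $a_0\in\mathbb C$, and $W_a(\zeta)=\zeta(1-\zeta/a)^{-1}$ if $a_0=\infty$. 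Using $(W_a\circ F^\nu)'=(W_a'\circ F^\nu)\,(F^\nu)'$ and setting $g_{\nu,a}:=\log(W_a'\circ F^\nu)$ --- explicitly $-2\log(F^\nu(z)-a)$, resp.\ $-2\log(1-F^\nu(z)/a)$, up to an additive constant --- we obtain, modulo constant functions,
\[
\widetilde L(\nu,a)=L(\nu)+g_{\nu,a}.
\]
Since the disk version of Theorem \ref{P-holo} gives that $L\colon M_p(\mathbb D^*)\to\widehat{\mathcal B}_p(\mathbb D)$ is holomorphic, and since $g_{\nu,a}=\widetilde L(\nu,a)-L(\nu)\in\widehat{\mathcal B}_p(\mathbb D)$ (recall $\widetilde L(\nu,a)\in L(M_p(\mathbb D^*))\subset\widehat{\mathcal B}_p(\mathbb D)$ by Proposition \ref{onlyL}), it suffices to show that $(\nu,a)\mapsto g_{\nu,a}$ is holomorphic into $\widehat{\mathcal B}_p(\mathbb D)$.

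\emph{Local boundedness of $g$.} Shrinking the neighborhood of $(\nu_0,a_0)$, the continuity of $\nu\mapsto\widetilde F^\nu$ lets us fix $0<c_0\le D<\infty$ with $d(a,\overline{F^\nu(\mathbb D)})\ge c_0$ and $\operatorname{diam}F^\nu(\mathbb D)\le D$ throughout. From $g_{\nu,a}'(z)=-2(F^\nu)'(z)/(F^\nu(z)-a)$ and the Koebe estimate $(1-|z|^2)|(F^\nu)'(z)|\asymp d(F^\nu(z),\partial F^\nu(\mathbb D))\le D$ we get a uniform bound on $\|g_{\nu,a}\|_{{\mathcal B}_\infty}$, hence, by Propositions \ref{norm} and \ref{H1}, on $\|g_{\nu,a}\|_{\rm BMOA}$. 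For $p>1$ this, together with $\|g_{\nu,a}\|_{\widehat{\mathcal B}_p}\asymp\|g_{\nu,a}\|_{{\mathcal B}_p}$ and the change of variables $\zeta=F^\nu(z)$, reduces the bound of $\|g_{\nu,a}\|_{\widehat{\mathcal B}_p}$ to that of
\[
\int_{F^\nu(\mathbb D)}\frac{d(\zeta,\partial F^\nu(\mathbb D))^{p-2}}{|\zeta-a|^{p}}\,d\xi d\eta\ \le\ \frac{1}{c_0^{p}}\int_{F^\nu(\mathbb D)} d(\zeta,\partial F^\nu(\mathbb D))^{p-2}\,d\xi d\eta\ \asymp\ \frac{1}{c_0^{p}}\int_{\mathbb D}(1-|z|^2)^{p-2}|(F^\nu)'(z)|^p\,dxdy ;
\]
when $p\ge 2$ the last integral is $\le D^{p-2}|F^\nu(\mathbb D)|$, uniformly bounded, while for $1<p<2$ it is locally bounded because $\nu\in M_p(\mathbb D^*)$, equivalently $L(\nu)\in\widehat{\mathcal B}_p(\mathbb D)$, and $\nu\mapsto(F^\nu)'$ is continuous into the weighted Bergman space $L^p(\mathbb D,(1-|z|^2)^{p-2}dxdy)$ --- a statement of the same nature as, and provable by the same Cauchy-integral estimates underlying, the continuity of $L$ in Theorem \ref{P-holo} (cf.\ \cite{Sh}, \cite{TS}). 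For $p=1$, by Remark \ref{normHD} one has $\|g_{\nu,a}\|_{\widehat{\mathcal B}_1(\mathbb D)}\asymp\|g_{\nu,a}\|_{{\mathcal B}_1^{\#}(\mathbb D)}=\int_{\mathbb D}|g_{\nu,a}''(z)|\,dxdy$, which is $\le \tfrac{2}{c_0}\int_{\mathbb D}|(F^\nu)''(z)|\,dxdy+\tfrac{2}{c_0^2}|F^\nu(\mathbb D)|$; here $\widehat{\mathcal B}_1(\mathbb D)\subset\dot{\mathcal H}_1^1(\mathbb D)$ forces $N_{F^\nu}\in H^1$, whence (using $L(\nu)(0)=0$) $\|L(\nu)\|_\infty\lesssim\|L(\nu)\|_{\dot{\mathcal H}_1^1}$ and $(F^\nu)'=e^{L(\nu)}\in H^\infty$, so that $\int_{\mathbb D}|(F^\nu)''|\,dxdy=\int_{\mathbb D}|N_{F^\nu}|\,|(F^\nu)'|\,dxdy\lesssim\|(F^\nu)'\|_\infty\|L(\nu)\|_{\dot{\mathcal H}_1^1}$, locally bounded by Theorem \ref{P-holo}. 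Thus $\|g_{\nu,a}\|_{\widehat{\mathcal B}_p}$ is bounded on a neighborhood of $(\nu_0,a_0)$.

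\emph{G\^ateaux holomorphy and conclusion.} For fixed $z\in\mathbb D$, the map $(\nu,a)\mapsto(W_a\circ F^\nu)'(z)=W_a'(F^\nu(z))(F^\nu)'(z)$ is holomorphic and nowhere zero: the holomorphic dependence of $F^\nu(z)$ and $(F^\nu)'(z)$ on $\nu$ is the Ahlfors--Bers theorem, and that of $W_a,W_a'$ on $a$ is elementary; hence $(\nu,a)\mapsto g_{\nu,a}(z)-g_{\nu,a}(0)$ is holomorphic for each $z$. Restricting to a complex line through $(\nu_0,a_0)$, the corresponding one-parameter family in $\widehat{\mathcal B}_p(\mathbb D)$ is locally bounded and its difference quotients converge pointwise; the same estimates that gave local boundedness dominate the relevant integrands and yield convergence in the $\widehat{\mathcal B}_p$-norm, so the family is holomorphic. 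Thus $g$ is locally bounded and G\^ateaux holomorphic, hence holomorphic by the standard criterion (see \cite[Lemma V.5.1]{Le}, \cite[Theorem 14.9]{Ch}, \cite[Lemma 6.1]{WM-2}). Therefore $\widetilde L=L+g$ is holomorphic, and since its image lies in $L(M_p(\mathbb D^*))$, the map $\widetilde L\colon\widetilde M_p(\mathbb D^*)\to L(M_p(\mathbb D^*))$ is holomorphic.

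The step I expect to be the main obstacle is the \emph{uniform} estimate inside the local-boundedness argument: pointwise finiteness of $\int_{F^\nu(\mathbb D)}d(\zeta,\partial F^\nu(\mathbb D))^{p-2}|\zeta-a|^{-p}d\xi d\eta$ (for $1<p<2$) and of $\int_{\mathbb D}|(F^\nu)''|\,dxdy$ (for $p=1$) is immediate from Proposition \ref{onlyL}, but making these bounds uniform on a neighborhood is exactly where the integrability $\nu\in M_p(\mathbb D^*)$ --- and, for $p=1$, the $\dot{\mathcal H}_1^1$-ingredient of $\widehat{\mathcal B}_1(\mathbb D)$ forcing $N_{F^\nu}\in H^1$ --- must be used quantitatively, in the spirit of Theorem \ref{P-holo}.
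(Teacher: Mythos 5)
Your overall strategy is the same as the paper's: write $\widetilde L(\nu,a)=L(\nu)+g_{\nu,a}$ with $g_{\nu,a}'=-2(F^\nu)'/(F^\nu-a)$, prove local boundedness of the correction term in $\widehat{\mathcal B}_p(\mathbb D)$, and then upgrade pointwise holomorphic dependence (G\^ateaux holomorphy) to holomorphy via \cite[Theorem 14.9]{Ch}, \cite[Lemma 6.1]{WM-2}. The G\^ateaux step and the cases $p\ge 2$ and (essentially) $p=1$ are fine. But there is a genuine gap exactly where you flag it, for $1<p<2$. By estimating the first-derivative seminorm $\Vert g_{\nu,a}\Vert_{\mathcal B_p}$ you are forced to bound $\int_{F^\nu(\mathbb D)}d(\zeta,\partial F^\nu(\mathbb D))^{p-2}\,d\xi d\eta\asymp\int_{\mathbb D}(1-|z|^2)^{p-2}|(F^\nu)'(z)|^p\,dxdy$, and the exponent $p-2<0$ means this is \emph{not} controlled by $\operatorname{diam}F^\nu(\mathbb D)$ and $d(a,\partial F^\nu(\mathbb D))$ alone. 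Your proposed remedy --- that $\nu\mapsto(F^\nu)'$ is continuous into $L^p(\mathbb D,(1-|z|^2)^{p-2}dxdy)$ ``by the same Cauchy-integral estimates underlying the continuity of $L$'' --- is asserted, not proved, and it is not of the same nature: Theorem \ref{P-holo} and its proof control the logarithmic derivative $N_{F^\nu}=(\log(F^\nu)')'$ in weighted $L^p$, which gives no quantitative control of $(F^\nu)'$ itself in that weighted space. Pointwise finiteness does follow from Proposition \ref{onlyL} (as you note), but the only route to \emph{uniformity} along those lines would be local boundedness of $\Vert\widetilde L(\nu,a)\Vert_{\widehat{\mathcal B}_p}$, which is precisely what is being proved, so the argument as written is circular at its key step.

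The paper avoids this entirely by never touching the weight exponent $p-2$: it estimates $\Vert\Phi-\Phi_0\Vert_{\mathcal B_p^{\#}}+\Vert\Phi-\Phi_0\Vert_{\mathcal B_\infty}$ (equivalent to the $\widehat{\mathcal B}_p$-seminorm). In the second derivative of the correction term only $((F^\nu)')^2/(F^\nu-a)^2$ and $(F^\nu)''/(F^\nu-a)$ appear; after the change of variables the first gives $\int\delta^{2p-2}\lesssim(\operatorname{diam}F^\nu(\mathbb D))^{2p}$ with the harmless exponent $2p-2\ge 0$, and the second is handled by Cauchy--Schwarz against $\Vert L(\nu)\Vert_{\mathcal B_{2p}}$, which is dominated by $\Vert L(\nu)\Vert_{\widehat{\mathcal B}_p}$ and hence locally bounded by Theorem \ref{P-holo}; the $\mathcal B_\infty$ part is bounded by $\operatorname{diam}F^\nu(\mathbb D)$. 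All the constants involved ($d(a,\partial F^\nu(\mathbb D))$, diameter, $\Vert L(\nu)\Vert_{\widehat{\mathcal B}_p}$, $\Vert L(\nu)\Vert_{\mathcal B_{2p}}$) vary in a bounded range near $(\nu_0,a_0)$, which is what your argument lacks for $1<p<2$. To repair your write-up, replace the $\mathcal B_p$-estimate of $g_{\nu,a}$ by this second-derivative estimate (this also streamlines your $p=1$ case). A small additional slip: for $p=1$ the seminorm equivalence $\Vert\cdot\Vert_{\widehat{\mathcal B}_1}\asymp\Vert\cdot\Vert_{\mathcal B_1^{\#}}$ you invoke is false as stated (linear functions have vanishing $\mathcal B_1^{\#}$-seminorm); one must add a term such as $|g_{\nu,a}'(0)|$, which is harmless here since it is at most $2/c_0$ under the normalization $F^\nu(0)=0$, $(F^\nu)'(0)=1$.
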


\begin{proof}
Let $\Phi_0=\widetilde L(\nu,\infty)=\log (F^\nu)'$ and $\Phi=\widetilde L(\nu,a)=\log (W_a \circ F^\nu)'$.
Then a simple computation yields
\begin{align}\label{1st2nd}
\Phi'(z)&=N_{W_a \circ F^\nu}(z)=N_{W_a} \circ F^\nu(z)\cdot (F^\nu)'(z)+N_{F^\nu}(z)
=\frac{-2(F^\nu)'(z)}{{F^\nu}(z)-a}+\Phi_0'(z);\\
\Phi''(z)&=\frac{2(F^\nu)'(z)^2}{(F^\nu(z)-a)^2}
-\frac{2(F^\nu)''(z)}{F^\nu(z)-a}+\Phi_0''(z).
\end{align}
When $a=\infty$, these read as $\Phi'(z)=\Phi_0'(z)$ and $\Phi''(z)=\Phi_0''(z)$. We may assume $a \neq \infty$.
Since $a \in \widetilde F^\nu(\mathbb D^*)$, the denominator $F^\nu(z)-a$ with $z \in \mathbb D$ is 
bounded below by the distance $d(a,\partial F^\nu(\mathbb D))$, which is bounded away from $0$
uniformly in $z$ and locally uniformly in $a$. Hence, it suffices to estimate the norms of $((F^\nu)')^2$ and 
$(F^\nu)''$ for $\Vert \Phi-\Phi_0 \Vert_{\widehat{\mathcal B}_p} \asymp \Vert \Phi-\Phi_0 \Vert_{{\mathcal B}^{\#}_p}
+\Vert \Phi-\Phi_0 \Vert_{{\mathcal B}_\infty}$ (see Remark \ref{normsupply}).

First, we consider the ${\mathcal B}^{\#}_p$-norm:
$$
\Vert \Phi-\Phi_0 \Vert_{{\mathcal B}^{\#}_p}^p\lesssim
\int_{\mathbb D}|(1-|z|^2)^2(F^\nu)'(z)^2|^p\frac{dxdy}{(1-|z|^2)^2}+
\int_{\mathbb D}|(1-|z|^2)^2(F^\nu)''(z)|^p\frac{dxdy}{(1-|z|^2)^2}.
$$
The first term is estimated by
\begin{equation}\label{first}
\int_{\mathbb D}|(1-|z|^2)^2(F^\nu)'(z)^2|^p\frac{dxdy}{(1-|z|^2)^2}=
\int_{F^\nu(\mathbb D)}\delta(\zeta)^{2p-2}d\xi d\eta \lesssim ({\rm diam}(F^\nu(\mathbb D)))^{2p},
\end{equation}
where $\delta$ is the inverse of half the hyperbolic density in $F^{\nu}(\mathbb D)$, that is,
$\delta(F^{\nu}(z))=(1-|z|^2)|(F^{\nu})'(z)|$. We note that $\delta(\zeta)$ is comparable
to the distance $d(\zeta, \partial F^{\nu}(\mathbb D))$ from $\zeta$ to the boundary $\partial F^{\nu}(\mathbb D)$.
For the second term, we apply the Cauchy--Schwarz inequality:
\begin{align}
&\quad\int_{\mathbb D}|(1-|z|^2)^2(F^\nu)''(z)|^p\frac{dxdy}{(1-|z|^2)^2}\\
&=\int_{\mathbb D}\Bigl|(1-|z|^2)\frac{(F^\nu)''(z)}{(F^\nu)'(z)}\Bigr|^p\cdot
|(1-|z|^2)(F^\nu)'(z)|^p\frac{dxdy}{(1-|z|^2)^2}\\
&\leq \Bigl(\int_{\mathbb D}|(1-|z|^2)N_{F^\nu}(z)|^{2p} \frac{dxdy}{(1-|z|^2)^2}\Bigr)^{1/2}
\Bigl(\int_{\mathbb D}|(1-|z|^2)(F^\nu)'(z)|^{2p} \frac{dxdy}{(1-|z|^2)^2}\Bigr)^{1/2}\\
&\lesssim \Bigl(\Vert \Phi_0 \Vert_{\mathcal B_{2p}}\
{\rm diam}(F^\nu(\mathbb D))
\Bigr)^{p}.
\end{align}
Here, we have applied \eqref{first} in the last line.

Next, we consider the ${\mathcal B}_\infty$-norm dominated by ${\mathcal B}_{2p}$-norm 
by Proposition \ref{norm} (i):
\begin{align}
\Vert \Phi-\Phi_0 \Vert_{{\mathcal B}_\infty} &\lesssim \Vert \Phi-\Phi_0 \Vert_{{\mathcal B}_{2p}}\\
&\lesssim \Bigl(\int_{\mathbb D} |(1-|z|^2)(F^\nu)'(z)|^{2p} \frac{dxdy}{(1-|z|^2)^2}\Bigr)^{1/(2p)}
\lesssim {\rm diam}(F^\nu(\mathbb D)),
\end{align}
where \eqref{first} is used again.

By the above computations, we see that $\Vert \widetilde L(\nu,a) \Vert_{\widehat{\mathcal B}_p}$ is bounded 
by a constant
determined in terms of $d(a,\partial F^\nu(\mathbb D))$, $\Vert L(\nu) \Vert_{\widehat{\mathcal B}_p}$, 
$\Vert L(\nu) \Vert_{{\mathcal B}_{2p}}$, and ${\rm diam}(F^\nu(\mathbb D))$.
For a given $(\nu_0,a_0) \in \widetilde M_p(\mathbb D^*)$,
all these quantities vary within a bounded range 
when $\nu \in M_p(\mathbb D^*)$ and $a \in \widetilde F^{\nu_0}(\mathbb D^*)$ move slightly from $(\nu_0,a_0)$.
This shows that $\widetilde L$ is locally bounded.

Under this local boundedness condition, 
if $\widetilde L$ is G\^{a}teaux holomorphic, then it is holomorphic by Proposition \ref{G-holomorphic}.
As shown in \cite[Lemma 6.1]{WM-2}, the G\^{a}teaux holomorphy of $\widetilde L$ follows from the condition that
for each fixed $z \in \mathbb D$,
$\widetilde L(\nu,a)(z)=\log (W_{a} \circ F^{\nu})'(z)$ is 
G\^{a}teaux holomorphic as a complex-valued function. 
By the holomorphic dependence of quasiconformal mappings on the Beltrami coefficients (see \cite[V. Theorem 5]{Ah}),
this can be verified. Thus, $\widetilde L$ is holomorphic on $\widetilde M_p(\mathbb D^*)$.
\end{proof}

Now we state the replacement of Theorem \ref{SL} as follows.

\begin{theorem}\label{bundle}
$J:L(M_p(\mathbb D^*)) \to S(M_p(\mathbb D^*))$ is a holomorphic split submersion for $p \geq 1$. 
\end{theorem}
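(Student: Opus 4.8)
The plan is to exploit the biholomorphic correspondence already established in Theorem \ref{SL} for the half-plane model, together with the fiber structure introduced via the auxiliary map $\widetilde{L}$, to analyze $J$ in the disk model. Concretely, I would argue that $J$ is a surjective holomorphic map admitting local holomorphic sections, and that near every point of $L(M_p(\mathbb D^*))$ the map $J$ is modeled (after a biholomorphic change of chart) on a linear projection of a product of Banach spaces, which is precisely the definition of a holomorphic split submersion.

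First I would establish surjectivity of $J:L(M_p(\mathbb D^*))\to S(M_p(\mathbb D^*))$: given $\Psi_0\in S(M_p(\mathbb D^*))$, pick $\nu$ with $S(\nu)=\Psi_0$; then $L(\nu)\in L(M_p(\mathbb D^*))$ maps to $\Psi_0$ since $J\circ L=S$. Next, the existence of local holomorphic right inverses is inherited from the half-plane case via Proposition \ref{S-section} (which the excerpt states carries over to the disk, being among ``the statements until Proposition \ref{global}''): for $\Psi_0\in S(M_p(\mathbb D^*))$ there is a neighborhood $V_{\Psi_0}$ in $\mathcal A_p(\mathbb D)$ and holomorphic $\sigma:V_{\Psi_0}\to M_p(\mathbb D^*)$ with $S\circ\sigma=\mathrm{id}$; then $L\circ\sigma:V_{\Psi_0}\cap S(M_p(\mathbb D^*))\to L(M_p(\mathbb D^*))$ is a local holomorphic section of $J$. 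This shows $J$ is a holomorphic submersion in the weak sense; the content of the theorem is the \emph{split} part, i.e.\ that the fibers and the base fit together as a local product with a complemented kernel.

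The key step is to produce, near a given $\Phi_0\in L(M_p(\mathbb D^*))$, a biholomorphic chart identifying a neighborhood of $\Phi_0$ with (an open subset of) a product $U\times D$, where $U$ is a neighborhood of $\Psi_0=J(\Phi_0)$ in $S(M_p(\mathbb D^*))$ and $D$ is an open subset of a fixed model space (here, essentially a copy of $\mathbb C$ or $\mathbb D^*$, corresponding to the point $a=W^{-1}(\infty)$), in such a way that $J$ becomes the projection onto $U$. For this I would use the fiber map $\widetilde L:\widetilde M_p(\mathbb D^*)\to L(M_p(\mathbb D^*))$ from Lemma \ref{localbound}. Fixing $\nu_0=\sigma(\Psi_0)$ via the local section, the assignment $(\Psi,a)\mapsto \widetilde L(\sigma(\Psi),a)$ is holomorphic on a neighborhood of $(\Psi_0,\infty)$ in $S(M_p(\mathbb D^*))\times\widehat{\mathbb C}$, and by Proposition \ref{affine} together with Lemma \ref{noother} (equivalently Proposition \ref{onlyL}) its image exhausts, fiberwise, the whole of $L(M_p(\mathbb D^*))$ over a neighborhood of $\Psi_0$: every $\Phi$ with $J(\Phi)=\Psi$ has the form $\log(W_a\circ F^\nu)'$ for a unique $a\in\widetilde F^\nu(\mathbb D^*)$ (uniqueness up to the affine ambiguity, which is killed by the normalization of $F^\mu$). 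One then checks this parametrization is a local biholomorphism onto its image by computing its derivative: the $\Psi$-derivative is an isomorphism onto a complement of the fiber direction (because $L\circ\sigma$ is a section), and the $a$-derivative is injective with closed complemented range by the explicit formula \eqref{1st2nd} for $\Phi'$ and $\Phi''$ in terms of $a$ — the term $-2(F^\nu)'/(F^\nu-a)$ depends nondegenerately on $a$. Having such a chart, $J$ reads off as the first-coordinate projection, which splits; one invokes the holomorphic inverse function theorem in Banach spaces (as in \cite[Theorem 14.9]{Ch} together with standard implicit function arguments) to conclude.

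The main obstacle I expect is verifying that the combined parametrization $(\Psi,a)\mapsto\widetilde L(\sigma(\Psi),a)$ is genuinely an immersion with \emph{split} (complemented) image of its differential, rather than merely injective: one must show the tangent space to $L(M_p(\mathbb D^*))$ at $\Phi_0$ decomposes as the direct sum of the ``horizontal'' subspace $d(L\circ\sigma)(T_{\Psi_0}S(M_p(\mathbb D^*)))$ and the one-complex-dimensional ``vertical'' subspace $\partial_a\widetilde L(\nu_0,\infty)$, and that this decomposition is topological (closed summands). The horizontal part is complemented because $L\circ\sigma$ is a holomorphic section of $J$, so $dJ$ restricted to its image is an isomorphism and $\ker dJ$ is a topological complement; the real work is to identify $\ker dJ$ with the one-dimensional vertical direction and confirm it is spanned by the $a$-derivative of $\widetilde L$, using the injectivity statement of Proposition \ref{affine} at the infinitesimal level and the norm estimates from the proof of Lemma \ref{localbound}. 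Once the kernel is pinned down as this explicit line, the splitting is automatic and the submersion property follows.
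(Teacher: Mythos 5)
Your proposal is correct and its decisive step is exactly the paper's proof: represent the given point as $\Phi=\widetilde L(\sigma(\Psi_0),a)$ using the disk version of Proposition \ref{S-section} together with Proposition \ref{affine} and Lemma \ref{noother}, then freeze $a$ and use $\widetilde L(\sigma(\cdot),a)$, holomorphic by Lemma \ref{localbound}, as a local holomorphic right inverse of $J$ passing through that point. The extra work you flag as the ``real work'' (the product chart and the identification of $\ker dJ$ with the one-dimensional $a$-direction) is not needed for this theorem: once a holomorphic local section $s$ through the point exists, $\operatorname{im}(d s)$ topologically complements $\ker dJ$ and the map $\Phi\mapsto\bigl(J(\Phi),\,P(\Phi-s(J(\Phi)))\bigr)$, with $P$ the projection onto $\ker dJ$, conjugates $J$ to a projection by the inverse function theorem; the finer fiberwise analysis you sketch is precisely what the paper carries out afterwards for the biholomorphy of $\lambda$ and the bundle structure in Theorem \ref{bundle2}.
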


\begin{proof}
For any $\Phi \in L(M_p(\mathbb D^*))$, let $\Psi_0=J(\Phi) \in S(M_p(\mathbb D^*))$. Then
there exists a
neighborhood $V_{\Psi_0}$ of $\Psi_0$ in $S(M_p(\mathbb D^*))$ 
and a holomorphic map $\sigma:V_{\Psi_0} \to M_p(\mathbb D^*)$ such that
$S \circ \sigma$ is the identity on $V_{\Psi_0}$, as in the case of 
$\mathbb H$ in Proposition \ref{S-section}. Let $\Phi_0=L \circ \sigma(\Psi_0)$, which may be different 
from $\Phi$.
Since $\Phi_0$ can be represented as $\log (F^{\sigma(\Psi_0)})'$, we have
$\Phi=\log (W_a \circ F^{\sigma(\Psi_0)})'$ for some $a \in \widetilde F^{\sigma(\Psi_0)}(\mathbb D^*)$
by Proposition \ref{affine}. 
Namely, $\Phi=\widetilde L(\sigma(\Psi_0),a)$.

Fix this $a$ and define a map
$\widetilde L(\sigma(\cdot),a):V_{\Psi_0} \to L(M_p(\mathbb D^*))$ after shrinking $V_{\Psi_0}$ if necessary.
By Lemma \ref{localbound}, this is a holomorphic map on $V_{\Psi_0}$.
Since $J \circ \widetilde L(\sigma(\Psi),a)=\Psi$ for every $\Psi \in V_{\Psi_0}$, the map
$\widetilde L(\sigma(\cdot),a)$ is a local holomorphic right inverse of $J$ such that $\widetilde L(\sigma(V_{\Psi_0}),a)$ passes through the given 
point $\Phi=\widetilde L(\sigma(\Psi_0),a)$.
This is equivalent to saying that $J$ is a holomorphic split submersion.
\end{proof}

The {\it Bers fiber space} $\widetilde T_p$ over $S(M_p(\mathbb D^*))$ is defined as
$$
\widetilde T_p=\{(\Psi,a) \in S(M_p(\mathbb D^*)) \times \widehat{\mathbb C} \mid \Psi=S(\nu),
\ a \in \widetilde F^{\nu}(\mathbb D^*),\ \nu \in M_p(\mathbb D^*)\}.
$$
Theorem \ref{Bersemb} in the next section identifies $S(M_p(\mathbb D^*))$ with
the Teichm\"uller space $T_p$. We note that the quasidisk $\widetilde F^{\nu}(\mathbb D^*)$ is determined by $\Psi$
independently of the choice of $\nu \in M_p(\mathbb D^*)$ with $S(\nu)=\Psi$.
We define a map $\lambda: \widetilde T_p \to L(M_p(\mathbb D^*))$ by
$\lambda(\Psi,a)=\widetilde L(\nu,a)$ for $S(\nu)=\Psi$. This is well defined independently of
the choice of $\nu$.

Note that the condition $a \in \widetilde F^{\nu}(\mathbb D^*)$ is equivalent to requiring that
$W_a \circ F^{\nu}$ maps $\mathbb D$ onto a bounded domain in $\mathbb C$, and 
that $a=\infty$ if and only if $W_a$ is an affine transformation of $\mathbb C$.
Hence, by Proposition \ref{affine},
$\lambda$ is bijective.
In fact, $\lambda$ is bijective on each fiber. That is, for each $\Psi \in S(M_p(\mathbb D^*))$ with $S(\nu)=\Psi$, 
$\lambda(\Psi,\cdot)$ maps $\widetilde F^{\nu}(\mathbb D^*)$ bijectively onto 
$J^{-1}(\Psi) \subset L(M_p(\mathbb D^*))$.
Here, $J^{-1}(\Psi)$ is a $1$-dimensional complex submanifold of $L(M_p(\mathbb D^*))$ since 
$J$ is a holomorphic split submersion by Theorem \ref{bundle}.

\begin{lemma}
$\lambda:\widetilde T_p \to L(M_p(\mathbb D^*))$ is a biholomorphic homeomorphism.
\end{lemma}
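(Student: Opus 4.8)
The plan is to endow $\widetilde T_p$ with its natural complex Banach manifold structure, check that $\lambda$ is holomorphic, and then produce a holomorphic local inverse of $\lambda$ at every point; since $\lambda$ is already known to be bijective (Propositions \ref{affine} and \ref{onlyL}), this makes $\lambda$ a biholomorphic homeomorphism. For the complex structure, I would first observe that $\widetilde T_p$ is an open subset of $S(M_p(\mathbb D^*)) \times \widehat{\mathbb C}$. The first factor is open in $\mathcal A_p(\mathbb D)$ by Proposition \ref{S-section}, hence a complex Banach manifold (the structure matching that of $T_p$ via Theorem \ref{Bersemb}). Given $(\Psi_0,a_0)\in\widetilde T_p$, choose $\nu_0$ with $S(\nu_0)=\Psi_0$; then $a_0$ has positive distance from $\overline{F^{\nu_0}(\mathbb D)}$. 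Using a local holomorphic section $\sigma\colon V_{\Psi_0}\to M_p(\mathbb D^*)$ of $S$ from Proposition \ref{S-section} and the continuous dependence of normalized quasiconformal maps on their dilatations, the quasidisk $\widetilde F^{\sigma(\Psi)}(\mathbb D^*)$ varies continuously with $\Psi$, so after shrinking $V_{\Psi_0}$ we may fix a neighborhood $D\ni a_0$ in $\widehat{\mathbb C}$ with $D\subset\widetilde F^{\sigma(\Psi)}(\mathbb D^*)$ for all $\Psi\in V_{\Psi_0}$. Thus $\{(\Psi,a)\mid \Psi\in V_{\Psi_0},\ a\in D\}$ is an open neighborhood of $(\Psi_0,a_0)$ inside $\widetilde T_p$, and $\widetilde T_p$ is open in the product. (The locus $a=\infty$, where $\lambda(\Psi,\infty)=L(\nu)$, is treated identically with a chart of $\widehat{\mathbb C}$ around $\infty$.)

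On such a neighborhood, since $\lambda(\Psi,a)=\widetilde L(\nu,a)$ does not depend on the choice of $\nu$ with $S(\nu)=\Psi$, we may take $\nu=\sigma(\Psi)$, obtaining $\lambda(\Psi,a)=\widetilde L(\sigma(\Psi),a)=:\Theta(\Psi,a)$. As $\sigma$ is holomorphic and $\widetilde L$ is holomorphic on $\widetilde M_p(\mathbb D^*)$ by Lemma \ref{localbound}, the composition $\Theta$, hence $\lambda$, is holomorphic. It then remains to invert $\Theta$ holomorphically near $\Phi_0=\Theta(\Psi_0,a_0)$. The first coordinate is recovered by $J$, which is holomorphic by Lemma \ref{BtoA}, since $J\circ\Theta(\Psi,a)=S(\sigma(\Psi))=\Psi$. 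For the second coordinate, given $\Phi\in L(M_p(\mathbb D^*))$ let $G_\Phi(z)=\int_0^z e^{\Phi(w)}\,dw$ be the associated normalized conformal map (so $G_\Phi=F^\mu$ when $\Phi=L(\mu)$); for fixed $z$, $\Phi\mapsto G_\Phi(z)$ is holomorphic on $\widehat{\mathcal B}_p(\mathbb D)$, and $a(\Phi)$ below is unaffected by replacing $\Phi$ by $\Phi+\text{const}$. Setting $\nu=\sigma(J(\Phi))$, we have $S_{G_\Phi}=J(\Phi)=S_{F^{\nu}}$, so $G_\Phi=M_\Phi\circ F^{\nu}$ for a unique M\"obius transformation $M_\Phi$, and the required second coordinate is $a(\Phi):=M_\Phi^{-1}(\infty)$, for which $\Theta(J(\Phi),a(\Phi))=\Phi$ by Proposition \ref{affine}.

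The main point to nail down is that $\Phi\mapsto M_\Phi$, equivalently $\Phi\mapsto a(\Phi)$, is holomorphic. This reduces to the holomorphy of the evaluations $\Phi\mapsto G_\Phi(z)$ together with the fact that a M\"obius transformation depends holomorphically on three of its values: writing $M_\Phi=G_\Phi\circ(F^{\nu})^{-1}$ on $F^{\nu}(\mathbb D)$ and evaluating at three fixed points of $F^{\nu}(\mathbb D)$ (whose images move holomorphically as $\nu=\sigma(J(\Phi))$ varies) expresses the coefficients of $M_\Phi$ holomorphically in $\Phi$, whence $a(\Phi)=M_\Phi^{-1}(\infty)$ is holomorphic. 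Then $\Phi\mapsto(J(\Phi),a(\Phi))$ is a holomorphic local inverse of $\Theta=\lambda$, so $\lambda$ is a local biholomorphism, and being bijective it is a biholomorphic homeomorphism. An alternative to this last step is to compute $d\Theta_{(\Psi_0,a_0)}$ directly from the formulas \eqref{1st2nd}, show it is a linear isomorphism of tangent spaces by separating the contribution of $d\sigma$ from the derivative in $a$, and invoke the holomorphic inverse function theorem in Banach spaces; I expect that bookkeeping to be the only real obstacle, and the explicit reconstruction above sidesteps it.
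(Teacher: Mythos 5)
Your argument is correct, and for the decisive step it takes a genuinely different route from the paper. Both proofs establish holomorphy of $\lambda$ the same way, by writing it locally as $\lambda_\sigma(\Psi,a)=\widetilde L(\sigma(\Psi),a)$ with $\sigma$ from Proposition \ref{S-section} and invoking Lemma \ref{localbound}, and both use Propositions \ref{affine} and \ref{onlyL} for bijectivity. The paper then proves biholomorphy via the inverse mapping theorem: it shows $d\lambda_\sigma$ is surjective by splitting the tangent space of $L(M_p(\mathbb D^*))$ as $d\lambda_\sigma(\mathscr T(V_\Psi))\oplus{\rm Ker}(dJ)$ (using $dJ\circ d\lambda_\sigma={\rm id}$) and identifying ${\rm Ker}(dJ)$ with the one-dimensional image $d\lambda_\sigma(\mathscr T(A))$, the last point resting on the surjectivity of $\lambda_\sigma|_{\{\Psi\}\times A}$ onto a fiber neighborhood coming from Proposition \ref{affine}. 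You instead bypass the derivative computation entirely and build a holomorphic local inverse explicitly: the first coordinate is $J(\Phi)$ (Lemma \ref{BtoA}), and the second is the pole $a(\Phi)=M_\Phi^{-1}(\infty)$ of the M\"obius factor between $G_\Phi=\int_0^{\cdot}e^{\Phi}$ and $F^{\sigma(J(\Phi))}$, recovered holomorphically from three point evaluations. Your route buys an explicit formula for $\lambda^{-1}$ (which also makes the trivializations in Theorem \ref{bundle2} concrete) and avoids the kernel-dimension argument, at the cost of verifying holomorphy of the evaluations $\Phi\mapsto G_\Phi(z)$ and of the interpolating M\"obius coefficients; the paper's route is shorter on function-theoretic bookkeeping but needs the fiber argument. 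Two small points you should make explicit: the definition of $G_\Phi$ requires fixing the representative of $\Phi$ modulo constants (say $\Phi(0)=0$), with point evaluation bounded via $\Vert\Phi\Vert_{\mathcal B_\infty}\lesssim\Vert\Phi\Vert_{\widehat{\mathcal B}_p}$, which you correctly note does not affect $a(\Phi)$; and the membership $a(\Phi)\in\widetilde F^{\sigma(J(\Phi))}(\mathbb D^*)$, needed so that $(J(\Phi),a(\Phi))$ lies in $\widetilde T_p$, should be justified by Lemma \ref{noother} (or by the already established surjectivity statement in Proposition \ref{onlyL}), since a priori one only sees $a(\Phi)\notin F^{\sigma(J(\Phi))}(\mathbb D)$.
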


\begin{proof}
Choose any $\Psi_0 \in S(M_p(\mathbb D^*))$, and take $V_{\Psi_0}$ and $\sigma$ as in the proof of Theorem \ref{bundle}.
The restriction of $\lambda$ to the domain
\[
\widetilde V_{\Psi_0}=\{(\Psi,a) \in V_{\Psi_0} \times \widehat{\mathbb C} \mid 
a \in \widetilde F^{\sigma(\Psi)}(\mathbb D^*)\} \subset \widetilde T_p
\]
is explicitly represented as 
$\lambda_\sigma(\Psi,a)=\widetilde L(\sigma(\Psi),a)$. Then
$\lambda_\sigma$ is holomorphic on $\widetilde V_{\Psi_0}$ by 
Lemma \ref{localbound}, and thus $\lambda$ is a holomorphic bijection. 

Moreover, for each fixed $\Psi \in V_{\Psi_0}$, the domain $\widetilde F^{\sigma(\Psi)}(\mathbb D^*)$ of complex dimension $1$
is mapped by $\lambda_\sigma(\Psi,\cdot)$
holomorphically and bijectively onto the complex submanifold $J^{-1}(\Psi) \subset L(M_p(\mathbb D^*))$.
Hence, $\lambda_\sigma(\Psi,\cdot)$ is a biholomorphic homeomorphism.
It follows from this fiberwise property 
that $\lambda^{-1}$ is holomorphic, and thus $\lambda$ is biholomorphic.
\end{proof}

The structure of the space $L(M_p(\mathbb D^*))$ over $S(M_p(\mathbb D^*))$ can be described precisely as follows.

\begin{theorem}\label{bundle2}
$L(M_p(\mathbb D^*))$ is a real-analytic disk bundle over $S(M_p(\mathbb D^*))$ with projection $J$.
\end{theorem}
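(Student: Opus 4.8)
The plan is to transport the assertion to the Bers fiber space by means of the biholomorphic homeomorphism $\lambda\colon\widetilde T_p\to L(M_p(\mathbb D^*))$ obtained just above. Since $J(\lambda(\Psi,a))=S_{W_a\circ F^\nu}=S(\nu)=\Psi$ whenever $S(\nu)=\Psi$, the map $J$ corresponds under $\lambda$ to the natural projection $\pi\colon\widetilde T_p\to S(M_p(\mathbb D^*))$, $(\Psi,a)\mapsto\Psi$, whose fiber over $\Psi$ is the quasidisk $\widetilde F^\nu(\mathbb D^*)$ (for $S(\nu)=\Psi$) --- a domain conformally equivalent to a disk and, as noted when $\widetilde T_p$ was introduced, one that depends only on $\Psi$. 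As $\lambda$ is in particular a real-analytic diffeomorphism of (complex, hence real-analytic) Banach manifolds, it carries any real-analytic local trivialization of $\pi$ to one of $J$; so it is enough to show that $\pi$ is a real-analytic disk bundle.

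To do this I would fix $\Psi_0\in S(M_p(\mathbb D^*))$ and take, exactly as in Proposition \ref{S-section} transported to the disk model, a neighborhood $V_{\Psi_0}$ and a holomorphic section $\sigma\colon V_{\Psi_0}\to M_p(\mathbb D^*)$ of $S$. Using the $\nu$-independence of the fiber, one has $\pi^{-1}(V_{\Psi_0})=\{(\Psi,a):\Psi\in V_{\Psi_0},\ a\in\widetilde F^{\sigma(\Psi)}(\mathbb D^*)\}$, and the candidate trivialization is the fiber-preserving bijection
\[
h_\sigma\colon V_{\Psi_0}\times\mathbb D^*\longrightarrow\pi^{-1}(V_{\Psi_0}),\qquad
(\Psi,w)\longmapsto\bigl(\Psi,\ \widetilde F^{\sigma(\Psi)}(w)\bigr),
\]
with inverse $(\Psi,a)\mapsto\bigl(\Psi,(\widetilde F^{\sigma(\Psi)})^{-1}(a)\bigr)$. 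The real point is that $h_\sigma$ and $h_\sigma^{-1}$ are real-analytic. I would read this off from Proposition \ref{real-analytic} in the disk model: there $\widetilde F^{\sigma(\Psi)}$ is written explicitly as the composition of a fixed real-analytic quasiconformal reflection with the diffeomorphism assembled from a normalized pair of linearly independent solutions of the Schwarzian equation whose coefficient is the push-forward of the Schwarzian $S(\sigma(\Psi))=\Psi$; since $\sigma$ is holomorphic and that coefficient varies holomorphically with $\Psi$, the solutions --- and hence $\widetilde F^{\sigma(\Psi)}(w)$ --- depend real-analytically on $(\Psi,w)\in V_{\Psi_0}\times\mathbb D^*$, while the fiberwise inverse is real-analytic by the real-analytic inverse function theorem in view of the non-vanishing Jacobian of the diffeomorphism $\widetilde F^{\sigma(\Psi)}$.

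It then remains to check the cocycle. For two charts arising from sections $\sigma_0\colon V_0\to M_p(\mathbb D^*)$ and $\sigma_1\colon V_1\to M_p(\mathbb D^*)$, the transition map on $(V_0\cap V_1)\times\mathbb D^*$ is
\[
h_{\sigma_1}^{-1}\circ h_{\sigma_0}(\Psi,w)=\Bigl(\Psi,\ (\widetilde F^{\sigma_1(\Psi)})^{-1}\circ\widetilde F^{\sigma_0(\Psi)}(w)\Bigr),
\]
and since $S(\sigma_0(\Psi))=S(\sigma_1(\Psi))=\Psi$ the two target quasidisks coincide, so $(\widetilde F^{\sigma_1(\Psi)})^{-1}\circ\widetilde F^{\sigma_0(\Psi)}$ is a genuine self-diffeomorphism of $\mathbb D^*$, real-analytic and depending real-analytically on $\Psi$ by the previous paragraph. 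Transporting the charts $h_\sigma$ through $\lambda$ now presents $J\colon L(M_p(\mathbb D^*))\to S(M_p(\mathbb D^*))$ as a locally trivial bundle with fiber the disk $\mathbb D^*$ and real-analytic transition functions, i.e. as a real-analytic disk bundle. (When $p>1$, using instead the single global real-analytic section of Proposition \ref{global} gives one chart covering the whole base, which is why the bundle is then real-analytically trivial, as in Corollary \ref{product}.)

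The step I expect to be the main obstacle is exactly the joint real-analyticity, in both directions, of $h_\sigma$: one must check that the explicit construction of $\widetilde F^{\sigma(\Psi)}$ from the fixed reflection and the Schwarzian equation with holomorphically varying coefficient really produces a map that is real-analytic jointly in the Banach parameter $\Psi$ and the point $w$, and that passing to the fiberwise inverse keeps it so. Everything else --- the identification with $\widetilde T_p$, the $\nu$-independence of the quasidisk, and the gluing of the charts --- is routine once this is available.
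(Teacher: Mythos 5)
Your proposal is correct and follows essentially the same route as the paper: the paper's local trivialization $\ell_\sigma(\Psi,\zeta)=\lambda_\sigma(\Psi,\widetilde F^{\sigma(\Psi)}(\zeta))$ is exactly your chart $h_\sigma$ composed with the biholomorphic map $\lambda$, with real-analyticity drawn, as you do, from Proposition \ref{real-analytic} (explicit construction via the fixed reflection and the Schwarzian ODE with holomorphically varying coefficient) together with the local holomorphic section $\sigma$ of Proposition \ref{S-section} and Proposition \ref{affine} for the fiber identification. Your extra cocycle check and the remark on the global section for $p>1$ are consistent with, and implicit in, the paper's treatment (Corollary \ref{product}).
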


\begin{proof}
We have seen that $\lambda_\sigma(\Psi,a)=\widetilde L(\sigma(\Psi),a)=\log (W_a \circ F^{\sigma(\Psi)})'$ is a biholomorphic homeomorphism of $\widetilde V_{\Psi_0} \subset \widetilde T_p$.
Using this, we provide the structure of a disk bundle over $S(M_p(\mathbb D^*))$ for $L(M_p(\mathbb D^*))$.
For every $\Psi_0 \in S(M_p(\mathbb D^*))$, define
$$
\ell_\sigma:V_{\Psi_0} \times \mathbb D^* \to J^{-1}(V_{\Psi_0}) \subset L(M_p(\mathbb D^*))
$$
by
$\ell_\sigma(\Psi, \zeta)=\lambda_\sigma(\Psi,\widetilde F^{\sigma(\Psi)}(\zeta))$. 
By Proposition \ref{affine}, $\ell_\sigma$ is a bijection satisfying 
$J \circ \ell_\sigma(\Psi,\zeta)=\Psi$.
Moreover, $\ell_\sigma$ is a real-analytic diffeomorphism since $\lambda_\sigma$ is biholomorphic and
$\widetilde F^{\sigma(\Psi)}$ is real-analytic by Proposition \ref{real-analytic}.
Hence, $\ell_\sigma$ gives a local trivialization for the projection 
$J:L(M_p(\mathbb D^*)) \to S(M_p(\mathbb D^*))$. 
This implies that $L(M_p(\mathbb D^*))$ possesses the structure of
a fiber bundle described in the statement.
\end{proof}

A global section of the bundle projection $J$ can be obtained by using the global real-analytic right inverse $\Sigma$ of
the Schwarzian derivative map $S:M_p(\mathbb D^*) \to S(M_p(\mathbb D^*))$ for $p>1$,
which is given in Proposition \ref{global} for the case of $\mathbb H$.
Replacing the local right inverse $\sigma$ in the proofs of Theorems \ref{bundle} and \ref{bundle2} with this $\Sigma$,
we define a bi-real-analytic map
$$
\ell_\Sigma:S(M_p(\mathbb D^*)) \times \mathbb D^* \to L(M_p(\mathbb D^*))
$$
by
$\ell_\Sigma(\Psi, \zeta)
=\widetilde L(\Sigma(\Psi),\widetilde F^{\Sigma(\Psi)}(\zeta))$. 
Then, in
the real-analytic category, the total space $L(M_p(\mathbb D^*))$ has the product structure,
and the bundle becomes trivial. 

\begin{corollary}\label{product}
Let $p>1$.
Each $\zeta \in \mathbb D^*$ defines a global real-analytic section 
$$
\ell_\Sigma(\cdot, \zeta):S(M_p(\mathbb D^*)) \to L(M_p(\mathbb D^*))
$$
for the holomorphic bundle projection $J:L(M_p(\mathbb D^*)) \to S(M_p(\mathbb D^*))$.
Moreover, the total space $L(M_p(\mathbb D^*))$ is real-analytically equivalent to $S(M_p(\mathbb D^*)) \times \mathbb D^*$
under $\ell_\Sigma$, with $J \circ \ell_\Sigma(\Psi,\zeta)=\Psi$.
\end{corollary}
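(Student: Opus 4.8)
The plan is to assemble Corollary \ref{product} directly from the machinery already built: Proposition \ref{global} furnishes the global real-analytic right inverse $\Sigma$ of $S$, and Theorems \ref{bundle} and \ref{bundle2} establish that $J:L(M_p(\mathbb D^*)) \to S(M_p(\mathbb D^*))$ is a real-analytic disk bundle. The key observation is that the local trivialization $\ell_\sigma$ constructed in the proof of Theorem \ref{bundle2} used only the existence of a real-analytic right inverse $\sigma$ of $S$ on a neighborhood $V_{\Psi_0}$; substituting the global $\Sigma$ for $\sigma$ promotes the local trivialization to a global one. So first I would define $\ell_\Sigma(\Psi,\zeta)=\widetilde L(\Sigma(\Psi),\widetilde F^{\Sigma(\Psi)}(\zeta))$ on all of $S(M_p(\mathbb D^*)) \times \mathbb D^*$, exactly mirroring the formula for $\ell_\sigma$ but now globally well defined since $\Sigma$ is.

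Next I would check the three required properties in turn. For the bundle identity $J \circ \ell_\Sigma(\Psi,\zeta)=\Psi$: since $S \circ \Sigma = \mathrm{id}$ on $S(M_p(\mathbb D^*))$, the point $\widetilde L(\Sigma(\Psi),a)$ lies in $J^{-1}(S(\Sigma(\Psi)))=J^{-1}(\Psi)$ for every admissible $a$, and in particular for $a=\widetilde F^{\Sigma(\Psi)}(\zeta)$, which is a legitimate choice because $\zeta \in \mathbb D^*$ forces $a \in \widetilde F^{\Sigma(\Psi)}(\mathbb D^*)$. For bijectivity of $\ell_\Sigma$: on each fiber $\{\Psi\} \times \mathbb D^*$ the map $\zeta \mapsto \widetilde L(\Sigma(\Psi),\widetilde F^{\Sigma(\Psi)}(\zeta))$ is the composition of the biholomorphic $\widetilde F^{\Sigma(\Psi)}|_{\mathbb D^*}$ with the injective map $\widetilde L(\Sigma(\Psi),\cdot)$, whose image is exactly the full fiber $J^{-1}(\Psi)$ by Proposition \ref{affine}, Lemma \ref{noother}, and Proposition \ref{onlyL}; this is precisely the fiberwise bijectivity already exploited in Theorem \ref{bundle2}, now applied with $\Sigma$ in place of $\sigma$. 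For real-analyticity of $\ell_\Sigma$ and its inverse: $\Sigma$ is real-analytic by Proposition \ref{global}, $\widetilde F^{\Sigma(\Psi)}$ is real-analytic by Proposition \ref{real-analytic} (extended to $\mathbb D$), and $\widetilde L$ is holomorphic by Lemma \ref{localbound}; their composition is therefore real-analytic, and the same argument as in Theorem \ref{bundle2} — where $\ell_\sigma$ was shown to be a real-analytic diffeomorphism by combining the biholomorphy of $\lambda_\sigma$ with the real-analyticity of $F^{\sigma(\Psi)}$ — applies verbatim with the subscript $\sigma$ replaced by $\Sigma$ to give the real-analytic inverse.

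Finally, the section claim is immediate once $\ell_\Sigma$ is in hand: for each fixed $\zeta \in \mathbb D^*$ the map $\Psi \mapsto \ell_\Sigma(\Psi,\zeta)$ is real-analytic (it is a slice of a real-analytic map on a product) and satisfies $J \circ \ell_\Sigma(\cdot,\zeta)=\mathrm{id}$, hence is a global real-analytic section of $J$; and the equivalence $L(M_p(\mathbb D^*)) \cong S(M_p(\mathbb D^*)) \times \mathbb D^*$ is just the statement that $\ell_\Sigma$ is a real-analytic diffeomorphism compatible with the two projections, which is what we verified. I do not anticipate a genuine obstacle here — the corollary is essentially a bookkeeping consequence of Theorem \ref{bundle2} and Proposition \ref{global} — but the one point that deserves care is confirming that the fiberwise image of $\widetilde L(\Sigma(\Psi),\cdot)$ is all of $J^{-1}(\Psi)$ and not a proper subset; this is exactly where Proposition \ref{onlyL} (no spurious components in $\widehat{\mathcal B}_p(\mathbb D)$) is essential, and it is the step I would be most careful to cite explicitly.
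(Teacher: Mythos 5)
Your proposal is correct and follows essentially the same route as the paper, which obtains the corollary precisely by replacing the local right inverse $\sigma$ in the proofs of Theorems \ref{bundle} and \ref{bundle2} with the global real-analytic right inverse $\Sigma$ from Proposition \ref{global} and defining $\ell_\Sigma(\Psi,\zeta)=\widetilde L(\Sigma(\Psi),\widetilde F^{\Sigma(\Psi)}(\zeta))$. The only minor point is that the real-analyticity of $\widetilde F^{\Sigma(\Psi)}$ is best cited from the second statement of Proposition \ref{global} (which covers coefficients in the image of $\Sigma$) rather than Proposition \ref{real-analytic}, which concerns the local sections $\sigma$.
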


Finally, we mention the characterization of $M_p(\mathbb D^*)$ in terms of
$\widehat{\mathcal B}_p(\mathbb D)$ and $\mathcal A_p(\mathbb D)$.
The difference in $J$ from the case of $\mathbb H$ does not affect other statements for the disk model substantially, 
and the result parallel to Theorem \ref{conformal} can be stated as follows.

\begin{corollary}[to Theorem \ref{conformal}]\label{disk2}
Let $F:\mathbb D \to \mathbb C$ be a conformal map onto a bounded domain that extends to a quasiconformal
homeomorphism of the extended complex plane $\widehat{\mathbb C}$.
Then, the following conditions are equivalent for every $p \geq 1$:
\begin{enumerate}
\item
$F$ extends quasiconformally to $\mathbb D^*$ so that its complex dilatation is in $M_p(\mathbb D^*)$;
\item
$\log F'$ belongs to $\widehat{\mathcal B}_p(\mathbb D)$;
\item
$S_{F}$ belongs to ${\mathcal A}_p(\mathbb D)$.
\end{enumerate}
\end{corollary}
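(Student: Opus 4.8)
The plan is to derive Corollary~\ref{disk2} from Theorem~\ref{conformal} together with the disk-model versions of Theorem~\ref{P-holo} and Lemma~\ref{BtoA} prepared in Section~4; no new analytic input is needed beyond the half-plane results and the Cayley isomorphism of Theorem~\ref{isomorphism} that transfers them. I would organize the proof as the cycle $(1)\Rightarrow(2)\Rightarrow(3)\Rightarrow(1)$.

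For $(1)\Rightarrow(2)$ I would invoke the disk analogue of Theorem~\ref{P-holo}: for $\mu_F\in M_p(\mathbb D^*)$ the function $L(\mu_F)=\log(F^{\mu_F})'$ lies in $\widehat{\mathcal B}_p(\mathbb D)$, which follows from the half-plane statement, the Cayley isomorphism $\widehat{\mathcal B}_p(\mathbb H)\cong\widehat{\mathcal B}_p(\mathbb D)$, and the M\"obius-invariance of $M_p$. For $(2)\Rightarrow(3)$ I would use the identity $S=J\circ L$ together with the disk analogue of Lemma~\ref{BtoA}, namely that $J\colon\widehat{\mathcal B}_p(\mathbb D)\to\mathcal A_p(\mathbb D)$ is bounded (and holomorphic): since $J$ kills additive constants, $S_F=J(\log F')\in\mathcal A_p(\mathbb D)$ whenever $\log F'\in\widehat{\mathcal B}_p(\mathbb D)$. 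Only boundedness of $J$ is used here, so the failure of injectivity of $J$ on $L(M_p(\mathbb D^*))$ is irrelevant to this implication. Finally, $(3)\Rightarrow(1)$ is already contained in the proof of Theorem~\ref{conformal}: that argument passes to the disk and shows, by Cui's method based on the Becker--Pommerenke local quasiconformal extension, that $S_F\in\mathcal A_p(\mathbb D)$ forces $\mu_F\in M_p(\mathbb D^*)$ for every $p\ge1$, so it may be reused verbatim. Alternatively one may transfer to $\mathbb H^-$ by setting $\tilde G=W\circ F\circ\kappa$ for a Cayley map $\kappa\colon\mathbb H^-\to\mathbb D$ and a M\"obius $W$ with $W(F(\kappa(\infty)))=\infty$, so that $\tilde G$ fixes $\infty$, $\mu_{\tilde G}=\kappa^{*}\mu_F$ on $\mathbb H^+$, and $S_{\tilde G}=(S_F\circ\kappa)(\kappa')^2$ has the same $\mathcal A_p$-norm as $S_F$ by conformal invariance; then Theorem~\ref{conformal} applies to $\tilde G$, and one transports the conclusion back through $\kappa$ using that $\kappa$ is M\"obius and $M_p$ is M\"obius-invariant.

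The one delicate point is the bookkeeping of normalizations. The map $L$ in Section~4 uses the conformal homeomorphism $F^{\mu_F}$ normalized by $F^{\mu_F}(0)=0$ and $(F^{\mu_F})'(0)=1$, whereas the $F$ in the statement is only required to have bounded image and to fix $\infty$; since both maps fix $\infty$ they differ by an affine transformation of $\mathbb C$, so $\log F'$ and $L(\mu_F)$ differ only by an additive constant and hence represent the same element of $\widehat{\mathcal B}_p(\mathbb D)$, which makes condition $(2)$ well posed and consistent with the notation $\log F'=L(\mu_F)$. Apart from this routine check, I expect no real obstacle: the entire content of the corollary is carried by Theorem~\ref{conformal} and its disk-model counterparts.
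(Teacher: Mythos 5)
Your cycle $(1)\Rightarrow(2)\Rightarrow(3)\Rightarrow(1)$, the normalization bookkeeping (the given $F$ and the normalized $F^{\mu_F}$ differ by an affine map, so $\log F'$ is well defined modulo constants), the step $(2)\Rightarrow(3)$ via the disk version of Lemma \ref{BtoA} (whose proof is the same integral estimate on $\mathbb D$), and the step $(3)\Rightarrow(1)$ via M\"obius invariance of the Schwarzian are all in line with the paper. The genuine gap is in $(1)\Rightarrow(2)$: you assert that the disk analogue of Theorem \ref{P-holo} ``follows from the half-plane statement, the Cayley isomorphism $\widehat{\mathcal B}_p(\mathbb H)\cong\widehat{\mathcal B}_p(\mathbb D)$, and the M\"obius-invariance of $M_p$.'' It does not, and this is exactly the point the paper flags: despite Theorem \ref{isomorphism}, condition $(2)$ does not transfer directly from Theorem \ref{conformal}. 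The reason is that the pre-Schwarzian derivative is not invariant under post-composition by M\"obius maps, and the two models use incompatible normalizations: the half-plane map fixes $\infty$, while the disk map has bounded image. Concretely, if $\tilde\mu$ is the pull-back of $\mu_F$ to $\mathbb H^+$, then $F^{\tilde\mu}=W\circ F^{\mu_F}\circ K^{-1}$ where the pole $a=W^{-1}(\infty)=F^{\mu_F}(K(\infty))$ lies on $\partial F^{\mu_F}(\mathbb D)$, so
$$
K_*\bigl(\log (F^{\tilde\mu})'\bigr)(\zeta)=\log (F^{\mu_F})'(\zeta)-2\log\bigl(F^{\mu_F}(\zeta)-a\bigr)+2\log(1-\zeta)+\mathrm{const}.
$$
The half-plane result plus Theorem \ref{isomorphism} only tells you that the left-hand side lies in $\widehat{\mathcal B}_p(\mathbb D)$; the two correction terms are individually \emph{not} in $\widehat{\mathcal B}_p(\mathbb D)$ for any finite $p$ (already $\log(1-\zeta)\notin\mathcal B_p(\mathbb D)$, and $\log(F^{\mu_F}(\zeta)-a)$ with $a\in\partial F^{\mu_F}(\mathbb D)$ fails by exactly the divergence computation in Lemma \ref{noother}). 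So no membership conclusion for $\log(F^{\mu_F})'$ can be extracted without proving a nontrivial cancellation in the combined term $\log\frac{F^{\mu_F}(\zeta)-a}{\zeta-K(\infty)}$, for which you give no argument; assuming it is essentially assuming the disk statement you are trying to prove.

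The paper's remedy, which you should adopt, is not to transfer but to repeat: prove the disk-model versions of Theorem \ref{P-holo} (together with Lemma \ref{BtoA} and Proposition \ref{S-section}) directly, with the normalization $F^\mu(0)=0$, $(F^\mu)'(0)=1$, $F^\mu(\infty)=\infty$; the proofs of Lemma \ref{P-bdd} and Theorem \ref{P-holo} go through verbatim in that setting. With the disk version of Theorem \ref{P-holo} in hand, your cycle closes exactly as you describe.
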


We note that the equivalence of $(1)$ and $(3)$ follows from that in Theorem \ref{conformal}
by the M\"obius invariance of the Schwarzian derivative.
However, despite the isomorphic relation between $\widehat{\mathcal B}_p(\mathbb H)$ and
$\widehat{\mathcal B}_p(\mathbb D)$ as in Theorem \ref{isomorphism},
the equivalence involving $(2)$ does not follow directly from Theorem \ref{conformal}.
By preparing the disk versions of Theorem \ref{P-holo} and
Lemma \ref{BtoA},
we must repeat the same arguments as in Theorem \ref{conformal}
to obtain Corollary \ref{disk2}.

\section{Structures of integrable Teich\-m\"ul\-ler spaces}\label{5}

The {\it universal Teich\-m\"ul\-ler space} $T$ is the
set of all normalized quasisymmetric homeo\-morphisms $h:\mathbb R \to \mathbb R$
that extend to quasiconformal homeomorphisms $H(\mu):\mathbb H \to \mathbb H$
with complex dilatations $\mu \in M(\mathbb H)$. Via the correspondence from Beltrami coefficients $\mu$
to quasisymmetric homeomorphisms $h$ through $H(\mu)$, we obtain
a map $\pi: M(\mathbb H) \to T$, called the Teich\-m\"ul\-ler projection.
When $\pi(\mu)=\pi(\nu)$, we say that $\mu$ and $\nu$ are {\it Teich\-m\"ul\-ler equivalent}.
An element $h(\mu)=H(\mu)|_{\mathbb R}$ of $T$ 
can be represented by the Teich\-m\"ul\-ler equivalence class $[\mu]$ for $\mu \in M(\mathbb H)$.
We refer to \cite[Chapter 3]{Le} for the basis of the universal Teich\-m\"ul\-ler space.

Let $F^\mu$ denote the normalized conformal homeomorphism of $\mathbb H^-$ that extends quasiconformally to
$\mathbb C$ with complex dilatation $\mu$ on $\mathbb H^+$,
for $\mu \in M(\mathbb H^+)$.
The Schwarzian derivative map 
$S:M(\mathbb H^+) \to {\mathcal A}_\infty(\mathbb H^-)$, where
${\mathcal A}_\infty(\mathbb H^-)$ is the Banach space of all holomorphic functions $\Psi$ on $\mathbb H^-$ with norm
$$
\Vert \Psi \Vert_{\mathcal A_\infty}=
\sup_{z \in \mathbb H^-} |{\rm Im}\,z|^2|\Psi(z)|<\infty,
$$ 
is defined by the
correspondence $\mu \mapsto S_{F^\mu}$,
and the pre-Schwarzian derivative map $L:M(\mathbb H^+) \to {\mathcal B}_\infty(\mathbb H^-)$
by the
correspondence $\mu \mapsto \log (F^\mu)'$. It is well known that both $S$ and $L$ are holomorphic split submersions onto
their images with respect to the norm $\Vert \cdot \Vert_\infty$ of $M(\mathbb H^+)$.

For $\mu$ and $\nu$ in $M(\mathbb H^+)$,
we have $\pi(\mu)=\pi(\nu)$ if and only if $F^\mu|_{\mathbb H^-}=F^\nu|_{\mathbb H^-}$. This induces
well-defined injections $\alpha:T \to {\mathcal A}_\infty(\mathbb H^-)$ with
$\alpha \circ \pi=S$, and $\beta:T \to {\mathcal B}_\infty(\mathbb H^-)$ with
$\beta \circ \pi=L$. We call $\alpha$ the {\it Bers embedding} and $\beta$ the {\it pre-Bers embedding}.
Then, it follows from the property of split submersion that both $\alpha$ and $\beta$ are
homeomorphisms onto their images.

For $p \geq 1$, the {\it $p$-integrable Teich\-m\"ul\-ler space} $T_p$ is defined by $T_p=\pi(M_p(\mathbb H))$.
The topology on $T_p$ is the quotient topology induced by $\pi$ from
that on $M_p(\mathbb H)$ with norm $\Vert \cdot \Vert_p+\Vert \cdot \Vert_\infty$.
Under this stronger topology, the holomorphy of $S:M_p(\mathbb H) \to {\mathcal A}_p(\mathbb H^-)$
and $L:M_p(\mathbb H) \to \widehat{\mathcal B}_p(\mathbb H^-)$ claimed by Proposition \ref{S-holo} and
Theorem \ref{P-holo} is still valid because ${\mathcal A}_p(\mathbb H^-) \subset {\mathcal A}_\infty(\mathbb H^-)$
and $\widehat{\mathcal B}_p(\mathbb H^-) \subset {\mathcal B}_\infty(\mathbb H^-)$ and the inclusions are continuous.
Since $T_p \subset T$, $\alpha$ and $\beta$ are also defined on $T_p$ by the restriction of these maps.
The complex Banach structure on $T_p$ is induced by these embeddings $\alpha$ and $\beta$. Indeed, 
Proposition \ref{S-section}, Theorem \ref{SL}, Corollary \ref{L-section},
and Theorem \ref{conformal} imply the following.

\begin{theorem}\label{Bersemb}
Let $p \geq 1$.
The Bers embedding $\alpha$ is
a homeomorphism onto the open set $\alpha(T_p)=S(M_p(\mathbb H^+))$ in ${\mathcal A}_p(\mathbb H^-)$.
The pre-Bers embedding $\beta$ is
a homeomorphism onto the open set $\beta(T_p)=L(M_p(\mathbb H^+))$ in $\widehat{\mathcal B}_p(\mathbb H^-)$.
These sets are given by
\begin{equation}\label{alphabeta}
\alpha(T_p)=\alpha(T) \cap {\mathcal A}_p(\mathbb H^-),\quad
\beta(T_p)=\beta(T) \cap \widehat{\mathcal B}_p(\mathbb H^-).
\end{equation}
The topological embeddings $\alpha$ and $\beta$ endow $T_p$ with complex Banach structures that are
biholomorphically equivalent.
\end{theorem}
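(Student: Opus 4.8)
The plan is to assemble the statement from results already established in the excerpt, treating it as a packaging of Propositions \ref{S-holo} and \ref{S-section}, Theorem \ref{SL}, and Corollary \ref{L-section}. First I would recall that by definition $\alpha \circ \pi = S$ and $\beta \circ \pi = L$, and that $\pi(\mu)=\pi(\nu)$ exactly when $F^\mu|_{\mathbb H^-}=F^\nu|_{\mathbb H^-}$, which is the same as $S_{F^\mu}=S_{F^\nu}$ and (since $F^\mu$ fixes $\infty$, so is determined up to post-composition by affine maps, and the normalization by three points removes that ambiguity) the same as $N_{F^\mu}=N_{F^\nu}$, hence $\log(F^\mu)'=\log(F^\nu)'$ up to an additive constant that the normalization kills. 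Thus $\alpha$ and $\beta$ are well-defined injections, and $\alpha(T_p)=S(M_p(\mathbb H^+))$, $\beta(T_p)=L(M_p(\mathbb H^+))$ set-theoretically.

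Next I would establish that these image sets are open. For $\alpha(T_p)=S(M_p(\mathbb H^+))$ this is the standard fact, which here follows from the local holomorphic right inverse in Proposition \ref{S-section}: around each $\Psi_0 \in S(M_p(\mathbb H^+))$ the neighborhood $V_{\Psi_0}$ lies in $S(M_p(\mathbb H^+))$ because $S\circ\sigma=\mathrm{id}$ on it. The same argument with Corollary \ref{L-section} gives openness of $\beta(T_p)=L(M_p(\mathbb H^+))$ in $\widehat{\mathcal B}_p(\mathbb H^-)$. For continuity of $\alpha$ and $\beta$: since $S=\alpha\circ\pi$ and $L=\beta\circ\pi$ are continuous (Propositions \ref{S-holo} and Theorem \ref{P-holo}) and $\pi$ is a quotient map, $\alpha$ and $\beta$ are continuous. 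For continuity of the inverses, one uses the local sections: $\alpha^{-1}=\pi\circ\sigma$ locally and $\beta^{-1}=\pi\circ\tau$ locally, both continuous since $\sigma,\tau,\pi$ are. Hence $\alpha$ and $\beta$ are homeomorphisms onto their open images. The identities \eqref{alphabeta} then follow by the characterization Theorem \ref{conformal}: a Schwarzian $S_F$ lies in $\mathcal A_p(\mathbb H^-)$ iff the underlying Beltrami coefficient is $p$-integrable, so $\alpha(T)\cap\mathcal A_p(\mathbb H^-)$ consists exactly of Schwarzians coming from $M_p(\mathbb H^+)$, i.e. $\alpha(T_p)$; likewise for $\beta$ with the $\widehat{\mathcal B}_p$-characterization.

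For the complex Banach structures: $\alpha$ transports the complex structure of the open subset $S(M_p(\mathbb H^+))\subset\mathcal A_p(\mathbb H^-)$ to $T_p$, and $\beta$ transports that of $L(M_p(\mathbb H^+))\subset\widehat{\mathcal B}_p(\mathbb H^-)$. To see these agree, it suffices to produce a biholomorphism between the two models intertwining $\alpha$ and $\beta$; but this is precisely Theorem \ref{SL}, which says $J$ restricts to a biholomorphic map from $L(M_p(\mathbb H^+))$ onto $S(M_p(\mathbb H^+))$ with $J\circ L=S$, equivalently $J\circ\beta=\alpha$. So the transition map $\alpha\circ\beta^{-1}=J|_{L(M_p(\mathbb H^+))}$ is biholomorphic, and the two complex Banach structures on $T_p$ coincide.

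The main obstacle is not any single deep estimate—everything quantitative is already in the cited results—but rather making sure the bookkeeping of normalizations is airtight: that the fixing of three points $0,1,\infty$ genuinely makes $\mu\mapsto\log(F^\mu)'$ (not merely up to an additive constant) injective modulo Teichm\"uller equivalence, so that $\beta$ is honestly well-defined into the quotient normed space $\widehat{\mathcal B}_p(\mathbb H^-)$ where constants are already ignored. Once that is checked, the rest is a formal combination: openness and bicontinuity from local holomorphic sections, identities \eqref{alphabeta} from Theorem \ref{conformal}, and biholomorphic equivalence from Theorem \ref{SL}.
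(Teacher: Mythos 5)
Your proposal is correct and follows essentially the same route as the paper, which gives no separate proof but states that Propositions \ref{S-holo} and \ref{S-section}, Theorem \ref{SL}, and Corollary \ref{L-section} imply the theorem; you assemble exactly these ingredients (plus Theorem \ref{P-holo} for continuity of $L$ and Theorem \ref{conformal} for \eqref{alphabeta}, which is also the intended source of that identity). The normalization bookkeeping you flag — affine ambiguity killed by fixing $0,1,\infty$, constants already ignored in $\widehat{\mathcal B}_p$ — is handled in the paper in the same way just before the definition of $J$ in Section 3.
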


\begin{remark}\label{remark7}
Using $M_p(\mathbb D^*)$ and ${\mathcal A}_p(\mathbb D)$, the Bers embedding $\alpha$ is
defined in the same way and has the same properties as above. However, 
the pre-Bers embedding $\beta$ cannot be defined in this setting, because 
the analogue of Theorem \ref{SL} fails with respect to the injectivity of $J$.
\end{remark}

Next, we consider the metric structure of $T_p$.
In the universal Teich\-m\"ul\-ler space $T$, the Teich\-m\"ul\-ler distance
is defined using the $L_\infty$-norm of Beltrami coefficients: 
the distance from the origin to $[\mu] \in T$ is
the infimum of $\log \bigl((1+\Vert \mu \Vert_\infty)/(1-\Vert \mu \Vert_\infty)\bigr)$ taken over all
Beltrami coefficients $\mu$ in the Teich\-m\"ul\-ler equivalence class $[\mu]$, and this is extended to every point of $T$ by
right translations.
We can provide a similar distance for $T_p$; in particular,
its underlying topological structure is defined in \cite{WM-1} as follows.

\begin{definition}
A sequence $[\mu_n]$ in $T_p$ for $p \geq 1$ converges to $[\nu] \in T_p$ if 
$$
\inf\,\{\Vert \mu_n \ast \nu^{-1}\Vert_p \mid \mu_n \in [\mu_n],\ \nu \in [\nu]\} \to 0 \quad (n \to \infty),
$$
where $ \mu \ast \nu^{-1}$ denotes the complex dilatation of 
the quasiconformal self-homeomorphism $H(\mu) \circ H(\nu)^{-1}$ of $\mathbb H$.
We call this the {\it Teich\-m\"ul\-ler topology}.
\end{definition}

We first show the following. 

\begin{proposition}
For $p \geq 1$,
the Teich\-m\"ul\-ler topology $\mathcal O_p$ on $T_p$ coincides with the quotient topology $\mathcal Q_{p,\infty}$
induced from $M_p(\mathbb H)$ with norm $\Vert \cdot \Vert_p+\Vert \cdot \Vert_\infty$.
\end{proposition}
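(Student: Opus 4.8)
The plan is to prove the two inclusions $\mathcal O_p\subseteq\mathcal Q_{p,\infty}$ and $\mathcal Q_{p,\infty}\subseteq\mathcal O_p$ separately. The first amounts exactly to the continuity of the Teich\-m\"ul\-ler projection $\pi:(M_p(\mathbb H),\Vert\cdot\Vert_p+\Vert\cdot\Vert_\infty)\to(T_p,\mathcal O_p)$, since $\mathcal Q_{p,\infty}$ is by definition the finest topology making $\pi$ continuous. The second I would obtain by producing, around every point of $T_p$, a continuous local right inverse $s$ of $\pi$ from $(T_p,\mathcal O_p)$ to $(M_p(\mathbb H),\Vert\cdot\Vert_p+\Vert\cdot\Vert_\infty)$: then for a $\mathcal Q_{p,\infty}$-open set $U$ the set $\pi^{-1}(U)$ is open in $M_p(\mathbb H)$, so $s^{-1}(\pi^{-1}(U))=U\cap\mathrm{dom}(s)$ is $\mathcal O_p$-open, and covering $T_p$ by the domains of these sections shows that $U$ is $\mathcal O_p$-open. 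Throughout one may check continuity on sequences, since $\mathcal O_p$ is the topology of a Teich\-m\"ul\-ler-type distance and $M_p(\mathbb H)$ is a normed space.

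For the continuity of $\pi$, suppose $\mu_n\to\mu$ in $\Vert\cdot\Vert_p+\Vert\cdot\Vert_\infty$. The distance from $\pi(\mu_n)$ to $\pi(\mu)$, being the infimum of $\Vert\mu_n'\ast\nu^{-1}\Vert_p$ over representatives, is dominated by $\Vert\mu_n\ast\mu^{-1}\Vert_p$, so it suffices to show the latter tends to $0$. Writing $F=H(\mu)$, the complex dilatation of $H(\mu_n)\circ H(\mu)^{-1}$ satisfies $|(\mu_n\ast\mu^{-1})(F(z))|=|\mu_n(z)-\mu(z)|/|1-\overline{\mu(z)}\mu_n(z)|$, and for large $n$ we have $\Vert\mu_n\Vert_\infty\le k:=(1+\Vert\mu\Vert_\infty)/2<1$. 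Performing the change of variables $w=F(z)$ in the hyperbolic $L^p$-integral (so that $dudv=|\partial_z F(z)|^2(1-|\mu(z)|^2)\,dxdy$) and using the standard quasiconformal distortion estimate $({\rm Im}\,z)|\partial_z F(z)|\lesssim{\rm Im}\,F(z)$, with constants depending only on $k$, one gets $\Vert\mu_n\ast\mu^{-1}\Vert_p\lesssim\Vert\mu_n-\mu\Vert_p\to 0$.

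For the local sections I would combine the local holomorphic right inverses $\sigma:V_{\Psi_0}\to M_p(\mathbb H^+)$ of the Schwarzian derivative map $S$ from Proposition \ref{S-section} with the Bers embedding $\alpha$ of Theorem \ref{Bersemb}. Since $S=\alpha\circ\pi$, the map $s:=\sigma\circ\alpha$ defined on $\alpha^{-1}(V_{\Psi_0})$ satisfies $\pi\circ s=\mathrm{id}$, and $\sigma$ is holomorphic, hence continuous into $\Vert\cdot\Vert_p+\Vert\cdot\Vert_\infty$; so it remains to show that $\alpha:(T_p,\mathcal O_p)\to\mathcal A_p(\mathbb H^-)$ is continuous (which also makes $\alpha^{-1}(V_{\Psi_0})$ an $\mathcal O_p$-open set). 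Given a sequence with $d_p([\mu_n],[\mu])\to 0$, pick representatives so that $\eta_n:=\mu_n'\ast\nu_n^{-1}\to 0$ in $\Vert\cdot\Vert_p$ with $\nu_n\in[\mu]$; then $\mu_n'=\eta_n\ast\nu_n$, whence $[\mu_n]=R_{[\mu]}([\eta_n])$ for the right translation $R_{[\mu]}$. Transporting by $\alpha$, this reads $S(\mu_n)=R_{[\mu]}^{\#}(S(\eta_n))$, where $R_{[\mu]}^{\#}=\alpha\circ R_{[\mu]}\circ\alpha^{-1}$ is a homeomorphism of $\alpha(T_p)\subset\mathcal A_p(\mathbb H^-)$ onto itself sending $0$ to $S(\mu)$. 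Since $\Vert S(\eta_n)\Vert_{\mathcal A_p}\le\widetilde C_p\Vert\eta_n\Vert_p\to 0$ by Proposition \ref{S-holo}, we conclude $\alpha([\mu_n])=S(\mu_n)\to S(\mu)=\alpha([\mu])$.

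The main obstacle is the continuity of the Bers embedding in the Teich\-m\"ul\-ler topology, which reduces to the good behaviour of the Schwarzian under a change of base point, i.e.\ to knowing that the base-point-change map $R_{[\mu]}^{\#}$ is well defined and $\Vert\cdot\Vert_{\mathcal A_p}$-continuous near $0$ in $\alpha(T_p)$; this rests on transporting the topological-group structure of $T_p$ through $\alpha$ (established independently via the Bers model), or on a direct estimate of $S(\eta\ast\nu)$ for a fixed $\nu\in[\mu]$ and small $\eta$, along the lines of \cite{WM-1}. The quasiconformal distortion estimate used for the continuity of $\pi$ is classical, and the case $p=1$ needs no special treatment here, since only the local sections of Proposition \ref{S-section} are used.
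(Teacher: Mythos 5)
Your second inclusion ($\mathcal Q_{p,\infty}\subseteq\mathcal O_p$) is essentially sound and is only a mild variant of the paper's argument: the paper composes the $\mathcal O_p$-continuity of $\alpha$ (quoted from \cite[Theorem 4.1]{WM-1}) with the fact that $\alpha:(T_p,\mathcal Q_{p,\infty})\to\alpha(T_p)$ is a homeomorphism (Theorem \ref{Bersemb}), whereas you build local continuous sections $\sigma\circ\alpha$ of $\pi$ from Proposition \ref{S-section}; both routes hinge on the same ingredient, the $\mathcal O_p$-continuity of $\alpha$, and your sketch of it via $[\mu_n]=R_{[\mu]}([\eta_n])$ and Proposition \ref{S-holo} is acceptable provided the $\Vert\cdot\Vert_{\mathcal A_p}$-continuity of the base-point change $R^{\#}_{[\mu]}$ is imported from the topological-group results in \cite{TT}, \cite{WM-1}, \cite{WM-4}, as you acknowledge.

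The genuine gap is in your first inclusion, the continuity of $\pi:M_p(\mathbb H)\to(T_p,\mathcal O_p)$. Your change of variables requires the pointwise bound $({\rm Im}\,z)^2 J_F(z)\lesssim({\rm Im}\,F(z))^2$ a.e., i.e.\ $({\rm Im}\,z)|\partial_z F(z)|\lesssim{\rm Im}\,F(z)$, for $F=H(\mu)$ an \emph{arbitrary} quasiconformal self-map of $\mathbb H$ with $\Vert\mu\Vert_\infty\le k$. This is not a classical distortion estimate and it is false in general: a quasiconformal map need not have locally bounded derivatives and can expand the hyperbolic area element by an unbounded factor. For instance, take $\mu\in M_p(\mathbb H)$ supported in a small disk about $i$ so that $H(\mu)$ behaves near $i$ like $i+(z-i)|z-i|^{\alpha-1}$ with $0<\alpha<1$; then ${\rm Im}\,z$ and ${\rm Im}\,F(z)$ stay comparable to $1$ while $|\partial_z F|\to\infty$ at $i$, and disks $E_r$ of radius $r$ about $i$ are mapped onto disks of radius about $r^{\alpha}$. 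Choosing $\mu_n=\mu+t_n\chi_{E_{r_n}}$ with $r_n\to0$ gives $\Vert\mu_n\ast\mu^{-1}\Vert_p^p\gtrsim t_n^p\,r_n^{2\alpha}$ against $\Vert\mu_n-\mu\Vert_p^p\asymp t_n^p\,r_n^{2}$, so no inequality $\Vert\mu_n\ast\mu^{-1}\Vert_p\le C(k,\mu)\Vert\mu_n-\mu\Vert_p$ can hold at such a base point. This is precisely the difficulty the paper's proof is built around: one first replaces the base point by a representative $\nu\in[\mu]$ whose quasiconformal extension is a bi-Lipschitz self-diffeomorphism of $\mathbb H^+$ (\cite[Lemma 3.4]{WM-1}), for which the Jacobian control you want does hold, and then uses the equivalence of $\Vert\mu_n-\nu\Vert_p\to0$ and $\Vert\mu_n\ast\nu^{-1}\Vert_p\to0$ for such $\nu$ (\cite[Lemma 3.1]{WM-1}). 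Without this (or some substitute for the passage to a good representative), your estimate, and hence your proof of this direction, does not go through.
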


\begin{proof}
To see that the quotient topology $\mathcal Q_{p,\infty}$ is stronger than $\mathcal O_p$, we show that
$\pi:M_p(\mathbb H^+) \to (T_p,\mathcal O_p)$ is continuous.
For each $[\nu] \in T_p$, there is a representative $\nu \in M_p(\mathbb H^+)$
such that $F^\nu$ is a bi-Lipschitz self-diffeomorphism of $\mathbb H^+$ by \cite[Lemma 3.4]{WM-1},
and for such $\nu$ the convergences $\Vert \mu_n-\nu \Vert_p \to 0$ and 
$\Vert \mu_n \ast \nu^{-1} \Vert_p \to 0$ as $n \to \infty$ are equivalent by \cite[Lemma 3.1]{WM-1}.
Hence, the projection $\pi$ is continuous.

To see that $\mathcal O_p$ is stronger than $\mathcal Q_{p,\infty}$, we show that
the identity map $\iota:(T_p,\mathcal O_p) \to (T_p,\mathcal Q_{p,\infty})$ is continuous.
The fact that $\alpha:(T_p,\mathcal O_p) \to \mathcal A_p(\mathbb H^-)$ is continuous can be verified
by an analogues argument of \cite[I, Lemma 2.9]{TT} with the bi-Lipschitz representative as above.
Since $\alpha(T_p) \subset \mathcal A_p(\mathbb H^-)$ is homeomorphic to $(T_p,\mathcal Q_{p,\infty})$
by Theorem \ref{Bersemb}, the identity map $\iota$ is continuous.
\end{proof}

\begin{remark}
In \cite{WM-1}, a different Teich\-m\"ul\-ler topology $\mathcal O_{p,\infty}$ is used, defined by
replacing $\Vert \mu_n \ast \nu^{-1}\Vert_p$ with
$\Vert \mu_n \ast \nu^{-1}\Vert_p +\Vert \mu_n \ast \nu^{-1}\Vert_\infty$ in the above definition.
Obviously, $\mathcal O_{p,\infty}$ is stronger than $\mathcal O_{p}$.
However, since the continuity of $\pi:M_p(\mathbb H^+) \to (T_p,\mathcal O_{p,\infty})$ 
can be proved in the same way,
the two topologies coincide.
\end{remark}

We now mention the topological group structure of $T_p$.
The Teich\-m\"ul\-ler space $T_p$ (as well as $T$) carries a group structure under the composition of
quasisymmetric homeomorphisms. For $h(\mu)=\pi(\mu)$ and $h(\nu)=\pi(\nu)$ in $T_p$,
the Teich\-m\"ul\-ler equivalence class of
the composition $h(\mu) \circ h(\nu)$ is denoted by $[\mu] \ast [\nu]$, and
the inverse $h(\mu)^{-1}$ by $[\mu]^{-1}$.
For every $[\nu] \in T_p$, the right translation $r_{[\nu]}:T_p \to T_p$
is defined by $[\mu] \mapsto [\mu] \ast [\nu]$.

The following topological-group property
is proved in \cite[Theorem I.3.8]{TT} and \cite[Theorem 6.1]{WM-4}.
The biholomorphic property is shown in \cite[Section 4]{WM-1}.

\begin{proposition}
For $p \geq 1$, $T_p$ is a topological group. 
Moreover,
every right translation $r_{[\nu]}$ is a biholomorphic automorphism of $T_p$.
\end{proposition}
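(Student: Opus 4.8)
The plan is to lift the group multiplication and the right translations to the level of $p$-integrable Beltrami coefficients, where they are governed by the chain rule for complex dilatations, and then to push the resulting holomorphy and continuity down to $T_p$ through the Bers embedding $\alpha$ of Theorem \ref{Bersemb}. Fix $[\nu]\in T_p$ and, by \cite[Lemma 3.4]{WM-1}, choose a representative $\nu_\ast\in[\nu]$ whose extension $H(\nu_\ast)\colon\mathbb H\to\mathbb H$ is bi-Lipschitz for the hyperbolic metric. The composition formula expresses the complex dilatation $\mu\ast\nu_\ast$ of $H(\mu)\circ H(\nu_\ast)$, pointwise, as a M\"obius function of $(\mu\circ H(\nu_\ast))$ with coefficients determined by $\nu_\ast$ and the unimodular factor $\overline{\partial H(\nu_\ast)}/\partial H(\nu_\ast)$. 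Since $\mu\mapsto \mu\circ H(\nu_\ast)$ is a bounded linear operator on $L_\infty(\mathbb H)$ which, by the bi-Lipschitz property together with \cite[Lemma 3.1]{WM-1}, also carries $M_p(\mathbb H^+)$ boundedly into itself, the map $R_{\nu_\ast}\colon M_p(\mathbb H^+)\to M_p(\mathbb H^+)$, $\mu\mapsto\mu\ast\nu_\ast$, is well defined, locally bounded, and G\^{a}teaux holomorphic, hence holomorphic by \cite[Lemma V.5.1]{Le} and \cite[Lemma 6.1]{WM-2}. By construction $\pi\circ R_{\nu_\ast}=r_{[\nu]}\circ\pi$.

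Next I would transfer this to the Bers model. Given $\Psi_0\in\alpha(T_p)=S(M_p(\mathbb H^+))$, let $\sigma\colon V_{\Psi_0}\to M_p(\mathbb H^+)$ be a local holomorphic right inverse of $S$ as in Proposition \ref{S-section}. On $V_{\Psi_0}$ one then has
$$
\alpha\circ r_{[\nu]}\circ\alpha^{-1}
=\alpha\circ r_{[\nu]}\circ\pi\circ\sigma
=\alpha\circ\pi\circ R_{\nu_\ast}\circ\sigma
=S\circ R_{\nu_\ast}\circ\sigma,
$$
a composition of holomorphic maps; since $\Psi_0$ is arbitrary, $\alpha\circ r_{[\nu]}\circ\alpha^{-1}$ is holomorphic on the open set $\alpha(T_p)$. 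Applying the same reasoning to $[\nu]^{-1}$, whose dilatation is that of the again bi-Lipschitz map $H(\nu_\ast)^{-1}$, shows that the inverse map $\alpha\circ r_{[\nu]^{-1}}\circ\alpha^{-1}$ is holomorphic as well. Hence $r_{[\nu]}$ is a biholomorphic automorphism of $T_p$.

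For the topological group property I would follow \cite[Theorem I.3.8]{TT} and \cite[Theorem 6.1]{WM-4} and verify that $([\mu],[\nu])\mapsto[\mu]\ast[\nu]$ and $[\mu]\mapsto[\mu]^{-1}$ are continuous for the Teich\-m\"ul\-ler topology, which coincides with the quotient topology $\mathcal Q_{p,\infty}$ by the previous proposition. Given a convergent net $([\mu_n],[\nu_n])\to([\mu],[\nu])$, one first uses \cite[Lemmas 3.1 and 3.4]{WM-1} to pass to representatives $\mu_n\to\mu_\ast$ and $\nu_n\to\nu_\ast$ in $M_p(\mathbb H^+)$ with $\mu_\ast$, $\nu_\ast$ bi-Lipschitz, and then checks that the composition map $(\mu,\nu)\mapsto \mu_{H(\mu)\circ H(\nu)}$ sends $M_p(\mathbb H^+)\times M_p(\mathbb H^+)$ continuously into $M_p(\mathbb H^+)$ near $(\mu_\ast,\nu_\ast)$; continuity of inversion at the origin is the analogous statement for $\mu\mapsto\mu_{H(\mu)^{-1}}$, and it propagates to all of $T_p$ by composing with the right translations, already shown to be homeomorphisms.

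The main obstacle is this last point: showing that the composition operation on complex dilatations is (jointly) continuous as a map into $M_p(\mathbb H^+)$, not merely into $M(\mathbb H)$. This reduces to a quasi-invariance estimate --- that precomposition by $H(\nu)$ distorts the seminorm $\|\cdot\|_p$ by a factor that stays bounded as $\nu$ varies near $\nu_\ast$ --- which is precisely where the bi-Lipschitz representatives of \cite[Lemma 3.4]{WM-1} and the norm comparison of \cite[Lemma 3.1]{WM-1} are indispensable; once it is available, all the holomorphy and continuity assertions follow from the standard criterion combining local boundedness with G\^{a}teaux holomorphy (respectively separate continuity).
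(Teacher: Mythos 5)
First, a remark on the target: the paper does not actually prove this proposition within its text — the topological-group property is quoted from \cite[Theorem I.3.8]{TT} and \cite[Theorem 6.1]{WM-4}, and the biholomorphy of right translations from \cite[Section 4]{WM-1}. So your sketch is not competing with an in-paper argument; it reconstructs the argument behind those citations. For the biholomorphy half your reconstruction is correct and is the standard route: choosing $\nu_*\in[\nu]$ with hyperbolically bi-Lipschitz extension $H(\nu_*)$ (\cite[Lemma 3.4]{WM-1}), the lifted map $R_{\nu_*}\colon\mu\mapsto$ (complex dilatation of $H(\mu)\circ H(\nu_*)$) is a locally bounded, G\^{a}teaux holomorphic self-map of $M_p(\mathbb H^+)$ because the bi-Lipschitz property gives uniform control of the hyperbolic Jacobian in the change of variables, and conjugating through the local holomorphic sections $\sigma$ of $S$ from Proposition \ref{S-section} yields the holomorphy of $\alpha\circ r_{[\nu]}\circ\alpha^{-1}=S\circ R_{\nu_*}\circ\sigma$ on the open set $\alpha(T_p)$ of Theorem \ref{Bersemb}, and likewise for $r_{[\nu]^{-1}}$. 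That part stands.

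For the topological-group half there is a genuine gap in the one quantitative claim you do make. You assert that precomposition by $H(\nu)$ distorts $\Vert\cdot\Vert_p$ by a factor that stays bounded as $\nu$ varies near $\nu_*$, attributing this to \cite[Lemma 3.1]{WM-1} and \cite[Lemma 3.4]{WM-1}. Those lemmas concern a single fixed representative with bi-Lipschitz extension; for a nearby coefficient $\nu$ the normalized extension $H(\nu)$ is determined by $\nu$ and is not known to be hyperbolically bi-Lipschitz with a uniform constant: closeness of $\nu$ to $\nu_*$ in $\Vert\cdot\Vert_p+\Vert\cdot\Vert_\infty$ controls the maximal dilatation of $H(\nu)\circ H(\nu_*)^{-1}$, but a quasiconformal self-map of $\mathbb H$ with small dilatation need not have hyperbolic Jacobian close to $1$ pointwise, so the uniform quasi-invariance of $\Vert\cdot\Vert_p$ under precomposition — equivalently the joint continuity of the composition of dilatations into $M_p(\mathbb H^+)$ — does not follow from the tools you name. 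That estimate is precisely the nontrivial content of \cite[Theorem I.3.8]{TT} and \cite[Theorem 6.1]{WM-4}, which is presumably why the paper cites them instead of reproving them. In sum: your argument for the biholomorphy of right translations is complete in outline, while the topological-group part is, as written, a plan that ultimately rests on the same external results as the paper, with its key continuity estimate not actually established.
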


The Weil--Petersson metric on $T_2$ is studied in \cite{Cu} and \cite{TT}.
This metric was generalized to $T_p$ for $p \geq 2$ in \cite{Ma1}.
In fact, the same definition also works for $p \geq 1$.
The {\it $p$-Weil--Petersson metric} on the tangent bundle of $T_p$ is easily defined by embedding $T_p$ 
into $\mathcal A_p(\mathbb H)$ via the Bers embedding $\alpha$ and assuming that
the tangent space ${\mathscr T}_{[\nu]}(T_p)$ of $T_p \cong \alpha(T_p)$ at any point $[\nu] \in T_p$ 
is $\mathcal A_p(\mathbb H)$.
Then, at the origin of $\alpha(T_p)$, the norm of a tangent vector $v$
in ${\mathscr T}_{[0]}(T_p) \cong  \mathcal A_p(\mathbb H)$
is defined to be $\Vert v \Vert_{{\mathcal A}_p}$ (or the norm of the adjoint operator $v^*$
acting on $\mathcal A_q(\mathbb H)$ for $1/p+1/q=1$); see \cite[Section 6.5]{Ma0}. 
For an arbitrary point $[\nu] \in T_p$ with $\alpha([\nu])=\Psi$, consider
the conjugate of the right translation $r_{[\nu]}^{-1}$ by $\alpha$.
Then $\alpha \circ r_{[\nu]}^{-1} \circ \alpha^{-1}$ is a biholomorphic automorphism of $\alpha(T_p)$ sending
$\Psi$ to $0$. The norm of a tangent vector $v$
in ${\mathscr T}_{[\nu]}(T_p) \cong  \mathcal A_p(\mathbb H)$ is defined to be
$\Vert d_\Psi(\alpha \circ r_{[\nu]}^{-1} \circ \alpha^{-1})(v) \Vert_{{\mathcal A}_p}$. This yields 
a Finsler metric on the tangent bundle of $T_p$ in a broad sense.
From the definition, the $p$-Weil--Petersson metric is invariant under the right translations of $T_p$.
The distance induced by this metric is called the {\it $p$-Weil--Petersson distance},
which dominates the Teich\-m\"ul\-ler topology on $T_p$. 

We can also introduce a different invariant Finsler metric using 
the pre-Bers embedding $\beta:T_p \to \widehat{\mathcal B}_p(\mathbb H)$.

\begin{definition}
For any tangent vector $u \in {\mathscr T}_{[\nu]}(T_p) \cong \widehat{\mathcal B}_p(\mathbb H)$ at $[\nu] \in T_p$ 
with $\beta([\nu])=\Phi$ for $p \geq 1$,
the {\it $p$-pre-Weil--Petersson metric} is the Finsler metric 
on the tangent bundle of $T_p$ modeled on
$\widehat{\mathcal B}_p(\mathbb H)$ given by 
$\Vert d_\Phi(\beta \circ r_{[\nu]}^{-1} \circ \beta^{-1})(u) \Vert_{\widehat{\mathcal B}_p}$.
\end{definition}

\begin{theorem}\label{complete}
The integrable Teich\-m\"ul\-ler space $T_p$ for $p \geq 1$ is complete with respect to 
the $p$-pre-Weil--Petersson distance. Moreover, 
the $p$-pre-Weil--Petersson metric is continuous on the tangent bundle of $T_p$ and invariant under the
right translations of $T_p$.
\end{theorem}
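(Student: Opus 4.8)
The plan is to mirror the argument already used for the $p$-Weil--Petersson metric derived from the Bers embedding $\alpha$, transporting each structural ingredient through the biholomorphic equivalence between $\alpha(T_p)$ and $\beta(T_p)$ established in Theorem \ref{Bersemb}. First I would address \emph{invariance under right translations}. For $[\nu_0]\in T_p$, the map $\beta\circ r_{[\nu_0]}^{-1}\circ\beta^{-1}$ is a biholomorphic automorphism of $\beta(T_p)=L(M_p(\mathbb H^+))$; hence for $[\nu]\in T_p$ and a tangent vector $u$, unwinding the definitions of the metric at $[\nu]$ and at $r_{[\nu_0]}([\nu])=[\nu]\ast[\nu_0]$ and using the chain rule $d(\beta\circ r_{([\nu]\ast[\nu_0])}^{-1}\circ\beta^{-1})=d(\beta\circ r_{[\nu]}^{-1}\circ\beta^{-1})$ along the relevant compositions shows that $r_{[\nu_0]}$ is an isometry. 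This is formally identical to the invariance argument for the $p$-Weil--Petersson metric and requires only that right translations are biholomorphic automorphisms (the cited Proposition preceding Theorem \ref{complete}).

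Next I would prove \emph{continuity of the metric on the tangent bundle}. The map $(\Phi,u)\mapsto \Vert d_\Phi(\beta\circ r_{\pi^{-1}(\Phi)}^{-1}\circ\beta^{-1})(u)\Vert_{\widehat{\mathcal B}_p}$ is the composition of the holomorphic (hence $C^\infty$, in particular jointly continuous in base point and direction) family of derivatives of the biholomorphic right translations with the norm $\Vert\cdot\Vert_{\widehat{\mathcal B}_p}$. Concretely, one uses that the right translation action $T_p\times\beta(T_p)\to\beta(T_p)$, $([\nu_0],\Phi)\mapsto (\beta\circ r_{[\nu_0]}^{-1}\circ\beta^{-1})(\Phi)$, together with its derivative in the fibre variable, depends holomorphically on all arguments — this is the standard holomorphic dependence of quasiconformal mappings on Beltrami coefficients, already invoked repeatedly in the excerpt (e.g. in Lemma \ref{localbound}) — and then continuity of the operator norm gives the claim. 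One can transfer this from the already-established continuity of the $p$-Weil--Petersson metric by conjugating with the biholomorphic equivalence $J|_{L(M_p(\mathbb H^+))}$ of Theorem \ref{SL}, whose derivative is a bounded linear isomorphism varying holomorphically, so that the two Finsler metrics are comparable up to locally bounded factors; continuity then follows.

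For \emph{completeness}, the cleanest route is again transfer: by Theorem \ref{SL} the map $J$ gives a biholomorphic homeomorphism $L(M_p(\mathbb H^+))\to S(M_p(\mathbb H^+))$ identifying $\beta(T_p)$ with $\alpha(T_p)$, and conjugating the $p$-pre-Weil--Petersson metric by this map yields a right-translation-invariant Finsler metric on $\alpha(T_p)$ that is comparable to the $p$-Weil--Petersson metric on every set where $dJ$ and $(dJ)^{-1}$ are bounded — by invariance it suffices to check this near the origin, where $J$ and $J^{-1}$ are biholomorphic between neighbourhoods of $0$, so $dJ(0)$ and its inverse are bounded linear isomorphisms. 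Hence the $p$-pre-Weil--Petersson distance is locally bi-Lipschitz equivalent to the $p$-Weil--Petersson distance at the origin, and by the right-invariance of both metrics this local comparison propagates to a global comparison of the two distances. Since $T_p$ is complete with respect to the $p$-Weil--Petersson distance (the analogue of the corresponding known result, which may be quoted), completeness with respect to the $p$-pre-Weil--Petersson distance follows.

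The main obstacle I expect is making the comparison of the two invariant Finsler metrics genuinely \emph{uniform}: right-invariance reduces everything to a neighbourhood of the base point, but one must be careful that the comparison constants obtained near the origin are transported without deterioration by the right translations — this works precisely because both metrics are right-invariant by construction, so a single pair of constants near the origin suffices, but this point should be stated explicitly rather than glossed over. A secondary technical point is verifying that the derivative $d_\Phi(\beta\circ r_{[\nu]}^{-1}\circ\beta^{-1})$ is a \emph{bounded} linear operator on $\widehat{\mathcal B}_p(\mathbb H)$ with locally bounded operator norm; this is where one leans on Theorem \ref{P-holo}, Lemma \ref{BtoA}, and Corollary \ref{L-section}, exactly as in the construction of the complex structure, and on the holomorphic dependence of $F^\mu$ on $\mu$.
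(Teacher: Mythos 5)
Your proposal is correct and follows essentially the route the paper itself takes: the paper's proof is exactly a transfer argument, noting that the statements can be reproduced by mimicking the Weil--Petersson case of Cui and \cite{Ma1}, and that "alternatively" one can translate those results through the biholomorphic map $J:\beta(T_p)\to\alpha(T_p)$ of Theorem \ref{SL} — which is precisely your comparison of the two right-invariant Finsler metrics via the bounded isomorphism $d_0J$ and its inverse. The only ingredient of the paper's primary variant that you do not use is the Gumenyuk--Hotta lemma (\cite[Theorem 5.1]{GH}), which serves as the Ahlfors--Weill-type local section for the pre-Bers embedding when one mimics the classical proofs directly instead of transferring via $J$.
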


\begin{proof}
The proof can be reproduced by mimicking that for the Weil--Petersson metric in
\cite[Theorem 5]{Cu} and \cite[Section 8]{Ma1}.
The only gap for the pre-Bers embedding case is the analogue of the Ahlfors--Weill section
for the Schwarzian derivative map. However, this is successfully filled by
the following claim obtained in \cite[Theorem 5.1]{GH} via the theory of chordal Loewner chains on the half-plane.
Alternatively for the latter statement, since $J:\beta(T_p) \to \alpha(T_p)$ is biholomorphic by Theorem \ref{SL}, 
the results for the Bers embedding transfer directly to the present case.
\end{proof}

\begin{lemma}
If $\Phi \in {\mathcal B}_\infty(\mathbb H^-)$ satisfies $\Vert \Phi \Vert_{{\mathcal B}_\infty}<\tfrac12$,
then $\mu(z)=-2\,{\rm Im}(z)|\Phi'(\bar z)|$ for $z \in \mathbb H^+$
belongs to $M(\mathbb H^+)$ and satisfies $\beta([\mu])=\Phi$.
\end{lemma}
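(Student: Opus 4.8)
The plan is to verify this statement by recognizing it as the Ahlfors–Weill-type extension adapted to pre-Schwarzian (rather than Schwarzian) derivatives on the half-plane. First I would recall the setup: given $\Phi \in \mathcal{B}_\infty(\mathbb{H}^-)$, we want to exhibit a Beltrami coefficient $\mu$ on $\mathbb{H}^+$ whose associated conformal map $F^\mu$ on $\mathbb{H}^-$ satisfies $\log (F^\mu)' = \Phi$, i.e. $N_{F^\mu} = \Phi'$. The candidate reflection formula $\mu(z) = -2\,\mathrm{Im}(z)\,|\Phi'(\bar z)|$ for $z \in \mathbb{H}^+$ — note $\bar z \in \mathbb{H}^-$ so $\Phi'(\bar z)$ makes sense — should be checked first to lie in $M(\mathbb{H}^+)$: since $\Vert \Phi \Vert_{\mathcal{B}_\infty} = \sup_{w \in \mathbb{H}^-} |\mathrm{Im}(w)\,\Phi'(w)| < \tfrac12$, we get $|\mu(z)| = 2\,\mathrm{Im}(z)\,|\Phi'(\bar z)| = 2\,|\mathrm{Im}(\bar z)|\,|\Phi'(\bar z)| < 1$, so $\Vert \mu \Vert_\infty < 1$.

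The heart of the argument is the identity $\beta([\mu]) = \Phi$. Here I would invoke the cited result \cite[Theorem 5.1]{GH}, obtained via chordal Loewner chains on the half-plane, which provides exactly this pre-Schwarzian analogue of the Ahlfors–Weill section: a holomorphic function $\Phi$ on $\mathbb{H}^-$ with small Bloch norm is the logarithmic derivative $\log F'$ of a conformal map $F$ of $\mathbb{H}^-$ admitting a quasiconformal extension to $\mathbb{C}$ whose complex dilatation on $\mathbb{H}^+$ is given by the displayed reflection formula. Concretely, one solves $N_F = \Phi'$ on $\mathbb{H}^-$ to recover $F = F^\mu$ up to an affine normalization (which is harmless since $\beta$ is defined via the normalized $F^\mu$ fixing $0,1,\infty$), then checks that the map obtained by reflecting $F$ across $\mathbb{R}$ using the formula $z \mapsto \overline{F(\bar z)} + (\text{correction terms built from } F'(\bar z))$ is quasiconformal with the asserted dilatation. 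Since $\Vert \Phi \Vert_{\mathcal{B}_\infty} < \tfrac12$, the resulting map is a genuine homeomorphism of $\mathbb{C}$, so $\mu \in M(\mathbb{H}^+)$ and, by construction, $F^\mu|_{\mathbb{H}^-}$ has $\log(F^\mu)' = \Phi$, i.e. $L(\mu) = \Phi$ and hence $\beta([\mu]) = \beta \circ \pi(\mu)^{-1}$... more precisely $\beta(\pi(\mu)) = L(\mu) = \Phi$.

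I expect the main obstacle to be confirming that the explicit reflection produces precisely the complex dilatation $-2\,\mathrm{Im}(z)\,|\Phi'(\bar z)|$ and not merely some dilatation with comparable norm; this requires carefully differentiating the reflected map and simplifying, and is exactly the computation carried out in \cite{GH}. An alternative route, which I would mention as a fallback, uses Theorem \ref{SL}: since $J : \beta(T_p) \to \alpha(T_p)$ is biholomorphic and the classical Ahlfors–Weill section for the Schwarzian gives a point of $T$ from $S_F = \Phi'' - (\Phi')^2/2$ (which has small $\mathcal{A}_\infty$-norm when $\Vert \Phi \Vert_{\mathcal{B}_\infty}$ is small), one can transport the Schwarzian-side section back through $L \circ \sigma$ to obtain a Beltrami coefficient realizing $\Phi$; however, matching the transported dilatation with the stated formula still ultimately reduces to the same computation, so the direct appeal to \cite[Theorem 5.1]{GH} is the cleanest path. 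The remaining bookkeeping — that $[\mu]$ is well-defined and that the normalization conventions for $F^\mu$ match those used to define $\beta$ — is routine.
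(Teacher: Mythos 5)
Your proposal matches the paper's treatment: the paper offers no independent argument for this lemma, presenting it as a quotation of \cite[Theorem 5.1]{GH} (the pre-Schwarzian Ahlfors--Weill-type section via chordal Loewner chains), which is exactly the source you invoke for the identity $\beta([\mu])=L(\mu)=\Phi$. Your only added computation, $|\mu(z)|=2\,|{\rm Im}(\bar z)\,\Phi'(\bar z)|\le 2\Vert\Phi\Vert_{\mathcal B_\infty}<1$, is correct and suffices for $\mu\in M(\mathbb H^+)$.
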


\section{Relationship with Teich\-m\"ul\-ler spaces of diffeomorphisms}\label{6}

In this section, we study the relationship between the integrable Teich\-m\"ul\-ler spaces $T_p$ $(p \geq 1)$
and the Teich\-m\"ul\-ler spaces $T^\gamma$ $(0<\gamma \leq 1)$ of 
orientation-preserving self-diffeomorphisms of $\mathbb R$ and $\mathbb S$, scaled by the regularity of their derivatives.
Since $T^\gamma$ can be characterized by the decay order of the supremum norm of Beltrami coefficients $\mu$ 
(see \cite{Ma2}, \cite{Ma3}, \cite{TW1}, and \cite{TW2}), 
we use this characterization of $T^\gamma$. 
Moreover, because the degeneration of the norm toward $\mathbb R$ and $\mathbb S$ leads to a discrepancy between the Teich\-m\"ul\-ler spaces modeled on $\mathbb H$ and on $\mathbb D$, we restrict attention here to the disk model.

For $0 <\gamma \leq 1$, the space $M^\gamma(\mathbb D^*)$ of $\gamma$-decay
Beltrami coefficients consists of all $\mu \in M(\mathbb D^*)$ such that 
$$
\underset{\ z \in \mathbb D^*}{\mathrm{ess}\sup}\,((|z|^2-1)^{-\gamma} \vee 1)|\mu(z)|<\infty.
$$
Then the Teich\-m\"ul\-ler space $T^\gamma$ of circle diffeomorphisms $h:\mathbb S \to \mathbb S$ whose derivatives $h'$ are
$\gamma$-H\"older continuous turns out to be $\pi(M^\gamma(\mathbb D^*))$. 
For $0<\gamma <1$, this is revealed in \cite[Theorems 1.1, 6.7]{Ma2}, summarizing existing results.
When $\gamma=1$, the corresponding circle diffeomorphisms $h$  
have continuous derivatives $h'$
satisfying the Zygmund condition:
$$
|h'(e^{i(\theta+t)})-2h'(e^{i\theta})+h'(e^{i(\theta-t)})| =O(t) \quad (t \to 0).
$$
The correspondence with $M^1(\mathbb D^*)$ is shown in \cite[Theorem 1.1]{TW1}.

For the image of $M^\gamma(\mathbb D^*)$ under the pre-Schwarzian derivative map $L$, we introduce the space 
${\mathcal B}^\gamma(\mathbb D)$ of $\gamma$-decay Bloch functions $\Phi \in \mathcal B_\infty(\mathbb D)$
satisfying 
$$
\sup_{z \in \mathbb D}\ (1-|z|^2)^{2-\gamma}|\Phi''(z)|<\infty.
$$
When $0<\gamma<1$, this is equivalent to
$\sup_{z \in \mathbb D}(1-|z|^2)^{1-\gamma}|\Phi'(z)|<\infty$. 
As before, $L$ is defined by $L(\mu)=\log (F^\mu)'$ on $\mathbb D$, 
where $F^\mu$ is the normalized conformal homeomorphism of $\mathbb D$ onto a bounded domain that
extends quasiconformally to
$\mathbb C$ with complex dilatation $\mu$ on
$\mathbb D^*$. 

It is proved in \cite[Theorem 4.6]{Ma2} and \cite[Theorem 1.1]{TW1} that 
$L(M^\gamma(\mathbb D^*)) \subset {\mathcal B}^\gamma(\mathbb D)$.
Moreover,
\begin{equation}\label{L-property}
L(M^\gamma(\mathbb D^*))=L(M(\mathbb D^*)) \cap {\mathcal B}^\gamma(\mathbb D),
\end{equation}
and, in particular, there exists a neighborhood of the origin in ${\mathcal B}^\gamma(\mathbb D)$ contained in
$L(M^\gamma(\mathbb D^*))$.
This is the unique component in ${\mathcal B}^\gamma(\mathbb D)$ arising from pre-Schwarzian derivative maps 
with different normalizations of $F^\mu$; that is, the analogue of Proposition \ref{onlyL} holds.
See \cite[Theorem 1.3]{TW1} and \cite[Theorem 1.1]{TW2}.

A basic relation between $T^\gamma$ and $T_p$ is as follows.

\begin{proposition}
If $\gamma p>1$, then $T^\gamma \subset T_p$. In particular, $T^1 \subset T_p$ for all $p>1$.
\end{proposition}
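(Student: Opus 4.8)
The plan is to prove the sharper set-theoretic inclusion $M^\gamma(\mathbb{D}^*) \subseteq M_p(\mathbb{D}^*)$ whenever $\gamma p>1$; applying the Teich\-m\"ul\-ler projection $\pi$ then gives $T^\gamma=\pi(M^\gamma(\mathbb{D}^*)) \subseteq \pi(M_p(\mathbb{D}^*))=T_p$, and the final assertion $T^1 \subset T_p$ $(p>1)$ is the special case $\gamma=1$, for which $\gamma p=p>1$.

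Fix $\mu \in M^\gamma(\mathbb{D}^*)$. By definition there is a constant $C>0$ with $|\mu(z)| \le C(|z|^2-1)^\gamma$ for a.e.\ $z \in \mathbb{D}^*$, and $\Vert \mu \Vert_\infty<1$. I would estimate the $M_p(\mathbb{D}^*)$-seminorm
$$
\Vert \mu \Vert_p^p=\int_{\mathbb{D}^*} |\mu(z)|^p\,\frac{4\,dxdy}{(|z|^2-1)^2}
$$
by splitting $\mathbb{D}^*$ into the inner annulus $A=\{1<|z|<2\}$ and its complement $A'=\mathbb{D}^*\setminus A$. On $A'$ I would use only $\Vert \mu \Vert_\infty<1$: there the integrand is $\lesssim (|z|^2-1)^{-2} \lesssim |z|^{-4}$, and $\int_{|z| \ge 2}|z|^{-4}\,dxdy<\infty$ in polar coordinates, so this piece is finite with no restriction on $\gamma$ or $p$. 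On $A$ I would use the decay bound to dominate the integrand by $C^p(|z|^2-1)^{\gamma p-2}$, and then pass to coordinates $z=(1+t)e^{i\theta}$, $0<t<1$, so that $|z|^2-1=2t+t^2 \asymp t$ and $dxdy \asymp dt\,d\theta$, obtaining
$$
\int_A |\mu(z)|^p\,\frac{4\,dxdy}{(|z|^2-1)^2} \lesssim \int_0^{2\pi}\!\!\int_0^1 t^{\gamma p-2}\,dt\,d\theta,
$$
which is finite precisely because $\gamma p-2>-1$, that is, $\gamma p>1$. Hence $\Vert \mu \Vert_p<\infty$, so $\mu \in M_p(\mathbb{D}^*)$, completing the argument.

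I do not expect a genuine obstacle here: the threshold $\gamma p=1$ emerges exactly from the integrability of $t^{\gamma p-2}$ at $t=0$. Two minor points deserve to be stated cleanly in the write-up: first, the hypothesis $\gamma p>1$ is only needed near the unit circle, the convergence near $\infty$ being automatic from $\Vert \mu \Vert_\infty<1$; and second, on the inner annulus $|z|^2-1 \asymp |z|-1$, which is why the condition defining $M^\gamma(\mathbb{D}^*)$ may equivalently be read with the factor $(|z|-1)^\gamma$, as in the Introduction.
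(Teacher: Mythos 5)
Your proposal is correct and follows exactly the route the paper takes: the paper reduces the claim to the inclusion $M^\gamma(\mathbb D^*) \subset M_p(\mathbb D^*)$ for $\gamma p>1$, citing only ``a simple computation,'' and your splitting of $\mathbb D^*$ into the annulus $1<|z|<2$ (where the decay bound gives the integrable power $t^{\gamma p-2}$) and its complement (where $\Vert\mu\Vert_\infty<1$ suffices) is precisely that computation, carried out correctly.
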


\begin{proof}
This follows from the inclusion $M^\gamma(\mathbb D^*) \subset M_p(\mathbb D^*)$ when $\gamma p>1$,
which is verified by a direct estimate.
\end{proof}

Thus, for $\gamma p>1$ we have the inclusion diagram
\begin{align*}
T^1 \subset &\ T^\gamma \subset \cdots \subset \lim_{\gamma \searrow 0}T^\gamma \quad \text{(decay order)}\\
&\ \cap \qquad \qquad \ \ \cap\\
T_1 \subset &\ T_p \subset \cdots \subset \lim_{p \nearrow \infty}T_p \quad \text{(integrability)}.
\end{align*}

We focus on the relation between $T^1$ and $T_1$. It is shown in \cite{AB} that
every quasisymmetric homeomorphism in the $1$-integrable Teich\-m\"ul\-ler space $T_1$ is
a $C^1$-diffeomorphism of $\mathbb S$ onto itself with nonvanishing derivative.
One might expect $T^1 \subset T_1$, but this is not the case.

\begin{theorem}\label{inclusion}
There is no inclusion relation between $T^1$ and $T_1$. 
\end{theorem}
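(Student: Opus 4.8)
The plan is to exhibit two quasisymmetric homeomorphisms, one lying in $T^1$ but not $T_1$ and one lying in $T_1$ but not $T^1$; both are produced by prescribing the pre-Schwarzian derivative $\Phi=\log F'$ of a conformal map $F$ of $\mathbb D$. I rely on the following dictionary. If $\Phi$ is bounded on $\mathbb D$ with $\Phi(0)=0$ and sufficiently small Bloch seminorm $\Vert\Phi\Vert_{{\mathcal B}_\infty(\mathbb D)}$, then $F$ with $F'=e^{\Phi}$, $F(0)=0$, is univalent on $\mathbb D$ by Becker's criterion, maps $\mathbb D$ onto a bounded domain, and extends to a quasiconformal self-homeomorphism of $\widehat{\mathbb C}$ fixing $\infty$; writing $\mu_F\in M(\mathbb D^*)$ for its complex dilatation we have $F=F^{\mu_F}$ and $\Phi=L(\mu_F)$. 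Then $h(\mu_F)\in T_1$ precisely when $\log F'=\Phi$ lies in $\widehat{\mathcal B}_1(\mathbb D)={\mathcal B}_1^{\#}(\mathbb D)$ (Remark \ref{normHD}), i.e.\ precisely when $\int_{\mathbb D}|\Phi''(z)|\,dxdy<\infty$, by Corollary \ref{disk2}; and $h(\mu_F)\in T^1$ precisely when $\Phi$ lies in ${\mathcal B}^1(\mathbb D)$, i.e.\ $\sup_{z\in\mathbb D}(1-|z|^2)|\Phi''(z)|<\infty$, using $L(M^1(\mathbb D^*))\subset{\mathcal B}^1(\mathbb D)$ together with $L(M^1(\mathbb D^*))=L(M(\mathbb D^*))\cap{\mathcal B}^1(\mathbb D)$ (see \eqref{L-property}). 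So it suffices to place $\Phi''$ on the desired side of the gap between the growth space $\{g:\sup_z(1-|z|^2)|g(z)|<\infty\}$ and the Bergman space $L^1(\mathbb D)$, keeping $\Phi$ bounded with small Bloch seminorm.

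For $T_1\not\subset T^1$ the example is elementary. Fix $0<\varepsilon<1$ and take $\Phi=t\Phi_\varepsilon$, where $\Phi_\varepsilon(z)=(1-(1-z)^{\varepsilon})/(\varepsilon(1-\varepsilon))$ and $t>0$ is small. Then $\Phi_\varepsilon$ is bounded on $\overline{\mathbb D}$ with $\Vert\Phi_\varepsilon\Vert_{{\mathcal B}_\infty(\mathbb D)}<\infty$ (since $(1-|z|^2)|\Phi_\varepsilon'(z)|\lesssim|1-z|^{\varepsilon}\to0$ as $z\to1$), while $\Phi_\varepsilon''(z)=(1-z)^{\varepsilon-2}$ is integrable on $\mathbb D$ because $\varepsilon-2>-2$, yet $(1-|z|^2)|\Phi_\varepsilon''(z)|\asymp(1-|z|)^{\varepsilon-1}\to\infty$ along $z\to1$ radially. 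By the dictionary, $h(\mu_F)\in T_1\setminus T^1$.

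For $T^1\not\subset T_1$ the construction must be genuinely global along $\partial\mathbb D$: a single boundary-point singularity $g(z)=(1-z)^{-\alpha}$ with $(1-|z|^2)|g|$ bounded automatically lies in $L^1(\mathbb D)$, so one must spread "almost-$1/(1-|z|)$" growth over the whole circle, which is possible precisely because $\int_{\mathbb D}(1-|z|^2)^{-1}\,dxdy=\infty$. I would take the lacunary series $\Phi=t\Phi_0$ with $\Phi_0(z)=\sum_{n\ge1}(n2^n)^{-1}z^{2^n}$ and $t>0$ small. The standard splitting of a lacunary sum at the index $n_0\asymp\log_2(1-|z|)^{-1}$ shows that $\Phi_0$ is bounded, that $(1-|z|^2)|\Phi_0'(z)|\to0$, and --- since the coefficient of $z^{2^n-2}$ in $\Phi_0''$ is $\asymp2^n/n$ --- that $(1-|z|^2)|\Phi_0''(z)|\lesssim1$; hence $\Phi_0\in{\mathcal B}^1(\mathbb D)$. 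On the other hand, by the classical equivalence of the $L^1$- and $L^2$-norms over circles for lacunary series, $\int_0^{2\pi}|\Phi_0''(re^{i\theta})|\,d\theta\asymp(\sum_n(2^n/n)^2r^{2^{n+1}})^{1/2}\gtrsim 2^j/j$ whenever $1-2^{-j}\le|z|\le1-2^{-j-1}$, so $\int_{\mathbb D}|\Phi_0''|\,dxdy\gtrsim\sum_j(2^j/j)\,2^{-j}=\sum_j j^{-1}=\infty$, i.e.\ $\Phi_0\notin\widehat{\mathcal B}_1(\mathbb D)$. By the dictionary, $h(\mu_F)\in T^1\setminus T_1$.

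The main obstacle is this second direction, and within it the divergence $\int_{\mathbb D}|\Phi_0''|\,dxdy=\infty$: a pointwise bound is useless here, and one must invoke the Paley--Zygmund/Khinchin-type lower bound for the $L^1$-norm of a lacunary series on each circle. Everything else --- boundedness and smallness of the Bloch seminorms of $\Phi_\varepsilon$ and $\Phi_0$, applicability of Becker's criterion and quasiconformal extendability, and the membership criteria for $T_1$ and $T^1$ via Corollary \ref{disk2}, Remark \ref{normHD} and \eqref{L-property} --- is routine.
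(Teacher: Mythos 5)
Your test functions and the computations for them are fine: $\Phi_\varepsilon$ is a legitimate (and simpler) substitute for the paper's $a(1-z)(\log 1/(1-z))^2$, your lacunary $\Phi_0$ plays the role of the paper's $a\sum 2^{-n}z^{2^n}$, and realizing small multiples as $\log F'$ of a bounded, quasiconformally extendable conformal map via Becker is unobjectionable. The genuine gap is in the step you declare routine, namely the ``membership criteria''. Corollary \ref{disk2} is a statement about the single representative $\mu_F$ attached to your $F$: it gives $\mu_F\in M_1(\mathbb D^*)\Leftrightarrow\log F'\in\widehat{\mathcal B}_1(\mathbb D)$. But $h(\mu_F)\in T_1$ is an existential statement over the whole Teichm\"uller class: it means \emph{some} $\nu\in M_1(\mathbb D^*)$ is equivalent to $\mu_F$. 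In the disk model the pre-Schwarzian derivative is \emph{not} a class invariant: if $\nu\sim\mu_F$, the normalized maps satisfy only $F^{\mu_F}=W\circ F^{\nu}$ on $\mathbb D$ for a M\"obius transformation $W$ which in general does not fix $\infty$ (this is exactly Proposition \ref{affine}, and the reason the pre-Bers embedding cannot be defined on $\mathbb D$, Remark \ref{remark7}). Hence $\Phi=\log(W\circ F^{\nu})'$ need not coincide with $L(\nu)$, and Corollary \ref{disk2} alone does not yield the implication ``$h(\mu_F)\in T_1\Rightarrow\Phi\in\widehat{\mathcal B}_1(\mathbb D)$''; the same objection applies to your criterion for $T^1$ via \eqref{L-property}. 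These two forward implications are precisely what you need to conclude $h\notin T_1$ from $\Phi_0\notin\widehat{\mathcal B}_1(\mathbb D)$ and $h\notin T^1$ from $\Phi_\varepsilon\notin\mathcal B^1(\mathbb D)$, and they are the actual crux of the theorem --- not the Paley--Zygmund estimate, which is a routine computation.

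The gap can be closed along the paper's lines, and your setup even simplifies one ingredient. If $\nu\sim\mu_F$ with $\nu\in M_1(\mathbb D^*)$ (resp.\ $M^1(\mathbb D^*)$), then $S_{F^{\mu_F}}=S_{F^{\nu}}$, so $\Phi=\log(W\circ F^{\nu})'$ for some M\"obius $W$; since your $F^{\mu_F}(\mathbb D)$ is bounded, $W\circ F^{\nu}(\mathbb D)$ is bounded (in the paper this boundedness has to be extracted from Lemma \ref{noother} because the argument starts from a Schwarzian, whereas here it is free), and then, as in the second half of Proposition \ref{affine}, one modifies $\nu$ only on a compact part of $\mathbb D^*$ to produce $\mu\in M_1(\mathbb D^*)$ (resp.\ $M^1(\mathbb D^*)$) with $L(\mu)=\Phi$; now Corollary \ref{disk2} (resp.\ $L(M^1(\mathbb D^*))\subset\mathcal B^1(\mathbb D)$) gives the contradiction. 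This ``promotion'' through the M\"obius ambiguity --- equivalently, the treatment of the non-injectivity of $J$ on $L(M(\mathbb D^*))$ carried out in Propositions \ref{affine} and \ref{onlyL} and in the proof of Theorem \ref{inclusion} --- is the missing step in your proposal; with it added, your argument becomes essentially the paper's proof with different explicit examples.
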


\begin{proof}
It is shown in \cite[p.17]{HW} that
$\mathcal B^1(\mathbb D)$ and $\mathcal B_1^\#(\mathbb D)$ are incomparable. 
More explicitly, $\Phi^1(z)=a\sum_{n=0}^\infty 2^{-n}z^{2^n}$ belongs to 
$\mathcal B^1(\mathbb D) \setminus \mathcal B_1^\#(\mathbb D)$, while
$\Phi_1(z)=a(1-z)(\log 1/(1-z))^2$ belongs to $\mathcal B_1^\#(\mathbb D) \setminus \mathcal B^1(\mathbb D)$ for
any constant $a \in \mathbb C$.
In Remark \ref{normHD}, we observed that
$\mathcal B_1^\#(\mathbb D)=\widehat{\mathcal B}_1(\mathbb D)$. 

For the pre-Schwarzian derivative map $L$
defined on $M_1(\mathbb D^*)$, we have
\begin{equation}
L(M_1(\mathbb D^*))=L(M(\mathbb D^*)) \cap \widehat{\mathcal B}_1(\mathbb D),
\end{equation}
which follows from Corollary \ref{disk2}.
Combining this with \eqref{L-property}, we see that by choosing $a>0$ sufficiently small, both $\Phi^1$ and $\Phi_1$ lie in $L(M(\mathbb D^*))$ and satisfy
$$
\Phi^1 \in L(M^1(\mathbb D^*)) \setminus L(M_1(\mathbb D^*)), \qquad 
\Phi_1 \in L(M_1(\mathbb D^*)) \setminus L(M^1(\mathbb D^*)).
$$

Applying $J:L(M(\mathbb D^*)) \to S(M(\mathbb D^*))$, which is not injective, we claim that
\begin{equation}\label{belonging}
J(\Phi^1) \in S(M^1(\mathbb D^*)) \setminus S(M_1(\mathbb D^*)), \qquad
J(\Phi_1) \in S(M_1(\mathbb D^*)) \setminus S(M^1(\mathbb D^*)).
\end{equation}
These two conditions yield the theorem, because $S(M^1(\mathbb D^*))$ is identified with $T^1$ via the Bers embedding
$\alpha:T^1 \to S(M^1(\mathbb D^*))$ by \cite[Theorem 3]{Ma3}, while $S(M_1(\mathbb D^*))$ is identified with $T_1$ by Theorem \ref{Bersemb} and
Remark \ref{remark7}.

It remains to prove \eqref{belonging}.
Set $\Phi^1=\log (F^\mu)'$ with $\mu \in M^1(\mathbb D^*)$ and
$J(\Phi^1)=S(\mu)$. 
Suppose, toward a contradiction, that $J(\Phi^1) \in S(M_1(\mathbb D^*))$. Then there exists $\nu \in M_1(\mathbb D^*)$ such that
$S(\nu)=S(\mu)$. 
Proposition \ref{affine} (i)
yields a M\"obius transformation $W$ of $\widehat{\mathbb C}$ such that
$\Phi^1=\log (W \circ F^{\nu})'$ and $W \circ F^{\nu}(\mathbb D)$ is bounded, and then (ii) implies
$\Phi^1 \in L(M_1(\mathbb D^*))$. However, this contradicts $\Phi^1 \notin L(M_1(\mathbb D^*))$.
Thus $J(\Phi^1) \notin S(M_1(\mathbb D^*))$, proving the first inclusion in \eqref{belonging}.
The second follows by the same argument.
\end{proof}

\end{document}